\documentclass[
a4paper,
11pt, 
DIV=14,
toc=bibliography, 
headsepline, 
parskip=half-,
abstract=true,
]{scrartcl}

\usepackage[english]{babel}
\usepackage[utf8]{inputenc}
\usepackage[T1]{fontenc}
\usepackage[english,cleanlook]{isodate}
\usepackage{lmodern}
\usepackage[babel=true]{microtype}
\usepackage{amsmath}
\usepackage{mathtools}
\usepackage{amssymb}
\usepackage{amsthm}
\usepackage{mathrsfs}
\usepackage{scrlayer-scrpage} 
\usepackage{enumitem}
\usepackage{tabularx}
\usepackage{tikz} 
\usepackage{tikz-3dplot}  

%----- Header -------------------------------------------
\ihead{M.\;B.\;Schulz, D.\;Wiygul}
\ohead{\csname @title\endcsname}
\pagestyle{scrheadings}
\setkomafont{pageheadfoot}{\footnotesize}

%----- Symbols ------------------------------------------
\providecommand{\R}{\mathbb{R}}
\providecommand{\Z}{\mathbb{Z}}
\providecommand{\N}{\mathbb{N}}
\providecommand{\Sp}{\mathbb{S}}
\providecommand{\T}{\mathbb{T}}
\providecommand{\hsd}{\mathscr{H}}
\providecommand{\st}{\, :\ }
\providecommand{\rs}{Z}
\providecommand{\refl}[1]{\underline{#1}}
\providecommand{\grp}[1]{\mathcal{#1}} 
\providecommand{\symdiff}{\mathbin{\triangle}}
\providecommand{\lune}{V}
\providecommand{\symgrp}[1]{G_{\mathrm{sym}}^{#1}}

%----- Delimiters ---------------------------------------

\DeclarePairedDelimiter\sk{\langle}{\rangle}
\DeclarePairedDelimiter\interval{]}{[}
\DeclarePairedDelimiter\Interval{[}{[}

%----- MathOperators ------------------------------------
\DeclareMathOperator{\genus}{genus}
\DeclareMathOperator{\Ogroup}{O}
\DeclareMathOperator{\Span}{Span}
\DeclareMathOperator{\ind}{ind}

%----- TikZ ---------------------------------------------
\usetikzlibrary{plotmarks}
\makeatletter
\tikzset{
    scale plot marks/.is choice,
    scale plot marks/true/.style={},	
    scale plot marks/false/.code={
        \def\pgfuseplotmark##1{\pgftransformresetnontranslations\csname pgf@plot@mark@##1\endcsname}
    },
every mark/.append style={scale plot marks=false},
plus/.style={mark=+,mark size=2.25pt},
vdash/.style={mark=|,mark size=2.25pt},
hdash/.style={mark=-,mark size=2.25pt},
bullet/.style={mark=*,mark size=1.125pt},
}
\makeatother
\definecolor{FarbeA}{cmyk}{0,0.5,1,0}
\definecolor{FarbeB}{cmyk}{1,0.5,0,0}	

\tikzset{ 
equator/.style={very thick,FarbeA},
axis/.style={red,very thick},
}

%----- Bib ---------------------------------------------- 
\usepackage[initials]{amsrefs}  
\makeatletter
\renewcommand{\amsrefs@warning}[1]{}  
\makeatother

%----- Hyperref -----------------------------------------
\usepackage[
pdftitle={A new family of minimal surfaces of even genus in the three-dimensional sphere},
pdfauthor={Mario B. Schulz, David Wiygul}, 
pdfborder={0 0 0},
]{hyperref}

%----- Theorem environments ----------------------------- 

\theoremstyle{plain}
\newtheorem{theorem}{Theorem}[section]
\newtheorem{lemma}[theorem]{Lemma}
\newtheorem{corollary}[theorem]{Corollary} 
\newtheorem{proposition}[theorem]{Proposition}
\newtheorem{conjecture}[theorem]{Conjecture}

\theoremstyle{definition}
\newtheorem{definition}[theorem]{Definition}
 
\theoremstyle{remark}
\newtheorem{remark}[theorem]{Remark}

%===== DOCUMENT =========================================

\title{A new family of minimal surfaces of even genus in~the three-dimensional sphere}
\author{Mario B. Schulz and David Wiygul}
\date{\vspace*{-4ex}}
 
\newcommand\printaddress{{
\setlength{\parindent}{17pt}
\medskip
\hfill\printdate{30.07.2025}
\par
{\scshape Mario B. Schulz}
\newline 
Università di Trento, 
Dipartimento di Matematica, 
via Sommarive 14, 
38123 Povo, 
Italy
\newline
\textit{E-mail address:} 
\texttt{mario.schulz@unitn.it}
\par\medskip
{\scshape David Wiygul}
\newline 
Università di Trento, 
Dipartimento di Matematica, 
via Sommarive 14, 
38123 Povo, 
Italy
\newline
\textit{E-mail address:} 
\texttt{mario.schulz@unitn.it}
}}

\begin{document}

\maketitle
 
\begin{abstract}
We discover a family of closed, embedded minimal surfaces in the three-dimensional round sphere which includes new examples with low genus. 
The existence proof relies on an equivariant min-max procedure applied to a novel sweepout which is constructed by fusing the equatorial sphere with the Clifford torus. 
We determine the full symmetry groups of our surfaces, prove lower bounds on their Morse indices, and show that they are geometrically distinct from all previously known examples. 
\end{abstract}

\section{Introduction}

A minimal surface in a given ambient manifold is a critical point of the area functional, or equivalently, a $2$-dimensional submanifold with vanishing mean curvature. 
While there are no closed minimal surfaces in Euclidean space $\R^3$, the sphere $\Sp^3$ contains a rich variety of compact, embedded examples without boundary. 
The simplest are the totally geodesic equatorial sphere $\Sp^2$ and the flat Clifford torus $\T^2$. 
Almgren \cite{Almgren1966} and Calabi \cite{Calabi1967} proved independently that $\Sp^2$ is unique up to congruence in the class of closed, immersed minimal surfaces of genus~$0$ in $\Sp^3$.  
Brendle \cite{Brendle2013} (see also his survey \cite{Brendle2013survey}) proved Lawson's \cite{Lawson1970conj} conjecture stating that $\T^2$ is unique up to congruence in the class of closed, embedded minimal surfaces of genus~$1$ in $\Sp^3$.  
Beyond these two examples, Lawson~\cite{Lawson1970} constructed minimal surfaces $\xi_{m,k}\subset\Sp^3$ of genus $mk$ for arbitrary $m,k\in\N$.
When $k$ is large, $\xi_{m,k}$ can be interpreted as a desingularization 
of $m+1$ great spheres intersecting along a common equator at equal angles. 
In this sense, the Lawson surfaces are spherical analogues of the complete
Karcher--Scherk \cite{KarcherScherk} towers
in $\R^3$, which desingularize planes intersecting along a line
(although \cite{Lawson1970} preceded
Karcher's \cite{KarcherScherk} generalization
of Scherk's \cite{Scherk1835} classical singly periodic minimal surface).

Lawson's construction proceeds by finding a minimal disc solving a well-selected Plateau problem in $\Sp^3$ and extending it to a closed surface by suitable reflections. 
Variations on this technique have been successfully applied in 
\cite{KarcherPinkallSterling,ChoeSoret-tordesing,BaiWangWang,KW-tormore}.
Many other families of minimal surfaces in $\Sp^3$ have been constructed by gluing methods in 
\cite{KapouleasYang,
Kapouleas-sphdbl,
W-torstack,
KW-tordesing,
KapouleasMcGrath-sphdbl,
KapouleasMcGrath2020,
KapouleasMcGrath-tordbl}.
A variety of equivariant min-max constructions were proposed in \cite{PittsRubinstein1988}, but none were carried out until the work \cite{Ketover2016Equivariant,KetoverMarquesNeves2020}; the article 
\cite{KetoverFlipping} contains further min-max examples, including ones not discussed by \cite{PittsRubinstein1988}. 
The Dorfmeister--Pedit--Wu \cite{DPW} method, based on an analogue of the Weierstrass representation, has also been a fruitful tool, with recent applications to minimal surfaces in $\Sp^3$ appearing in~\cites{HellerHellerTraizet-area,BobenkoHellerSchmitt-CMC,BobenkoHellerSchmitt-refl}.
Finally, minimal surfaces in $\Sp^3$ have been obtained in \cite{KarpukhinKusnerMcGrathStern} by equivariant optimization of the first nontrivial Laplace eigenvalue.
 
In this article, we construct a new family of minimal surfaces in $\Sp^3$. 
Our surfaces are spherical analogues of the celebrated
Costa--Hoffman--Meeks \cites{Costa, HoffmanMeeks-chm} minimal surfaces in $\R^3$.
The latter include for each $2\leq n\in\N$ a complete, embedded minimal surface of genus $n-1$ with one planar end and a pair of catenoidal ends.  
In our construction, the equatorial sphere $\Sp^2$ plays the role of the plane and the Clifford torus $\T^2$ plays the role of the catenoid (see Figure~\ref{fig:n=3}), and we likewise obtain a minimal surface for each $2\leq n\in\N$. 
In particular, we prove the existence of new embedded minimal surfaces in $\Sp^3$ with low genus. 
To achieve this, we employ $\grp{G}_n$-equivariant min-max methods.
For a definition of the group $\grp{G}_n$ and its action on $\Sp^3$ we refer to equation \eqref{eqn:group} in Section~\ref{sec:equivariant}.

\begin{figure}\centering 
\includegraphics[width=0.9\textwidth]{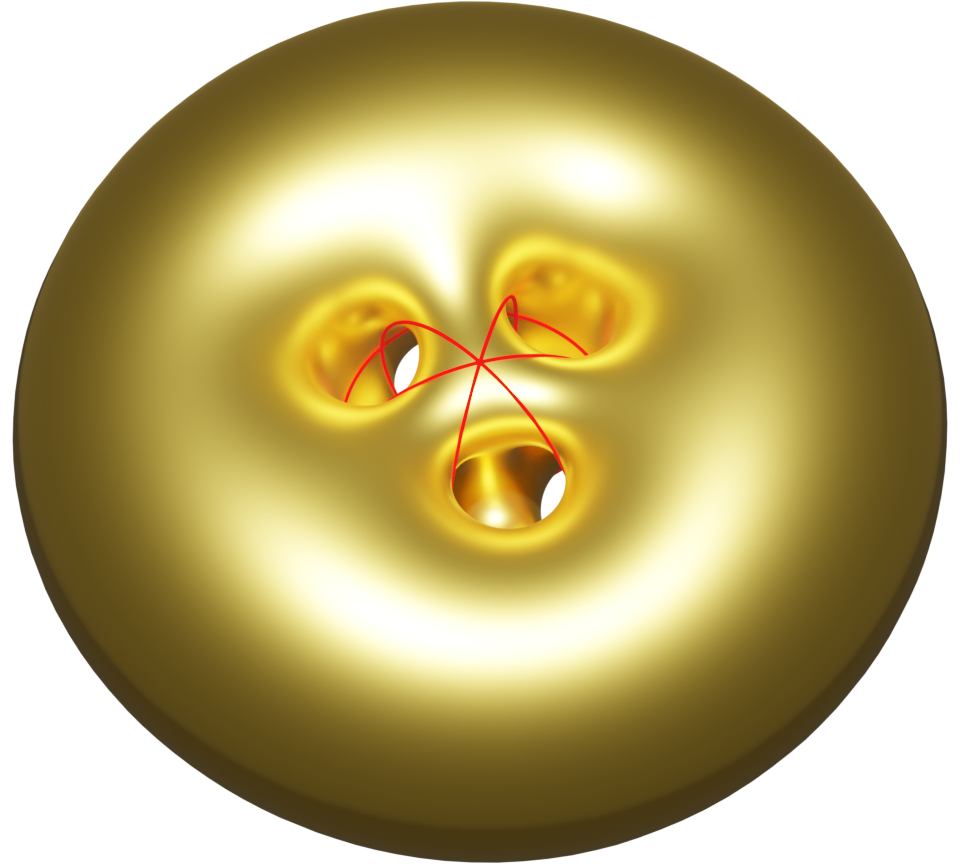}

\bigskip

\includegraphics[width=0.9\textwidth]{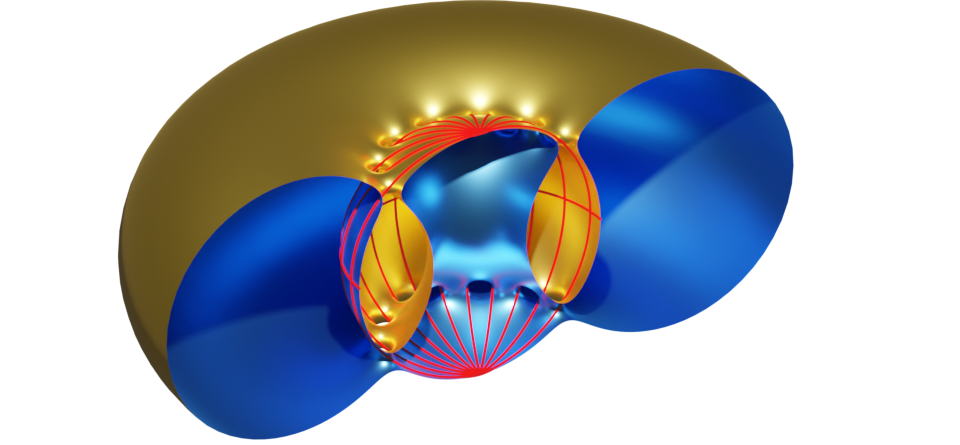}
\caption{Stereographic projection of $\Gamma_n$ for $n=3$ (top image) and $n=12$ (bottom image, cross-section), containing the equator $\Sp^1$ and the $n$ meridians $\xi_{k/n}$.}%
\label{fig:n=3}%
\end{figure}

\begin{theorem}\label{thm:main}
For each $2\leq n\in\N$ there exists a closed, embedded, $\grp{G}_n$-equivariant minimal surface $\Gamma_n\subset\Sp^3$ with the following properties. 
\begin{enumerate}[label={\normalfont(\roman*)},nosep]
\item\label{thm:main-genus} The genus of $\Gamma_n$ is equal to either $2n$ or $2n-2$. 
\item\label{thm:main-area} The area of $\Gamma_n$ is strictly between $2\pi^2$ and $2\pi^2+4\pi+\frac{1}{25}$.  
\item \label{thm:main-axes} $\Gamma_n$ contains the equator $\Sp^1$ as well as $n$ equally spaced meridians in the equatorial great sphere.  
\item\label{thm:main-symmetry} 
The group $\grp{G}_n$ of order $8n$ is the full symmetry group of $\Gamma_n$ for all $n\geq4$. 
\item\label{thm:main-index} The Morse index of $\Gamma_n$ is at least $4n-1$. 
\end{enumerate}
\end{theorem}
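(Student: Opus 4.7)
The plan is to realize $\Gamma_n$ via $\grp{G}_n$-equivariant one-parameter min-max. I would first construct an explicit $\grp{G}_n$-invariant sweepout $\{\Sigma_t\}_{t\in[0,1]}$ of $\Sp^3$, degenerating to configurations of vanishing area at the two endpoints and passing through a $\grp{G}_n$-symmetric desingularization of $\Sp^2\cup\T^2$ at some intermediate parameter. The desingularization is effected by localized Scherk-type tunnels placed along the intersection circles and the $n$ meridian arcs, and I would tune the tunnel scales so that $\sup_{t\in[0,1]}\hsd^2(\Sigma_t)<2\pi^2+4\pi+\tfrac{1}{25}$. Running the $\grp{G}_n$-equivariant min-max procedure of Pitts--Rubinstein, as developed in the smooth setting by Ketover and Marques--Neves, on the saturation of this sweepout then yields a smooth, closed, embedded $\grp{G}_n$-invariant minimal surface $\Gamma_n$ whose area $W$ equals the min-max width.

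\textbf{Area bounds, axes and multiplicity.} Item (ii) reduces to two estimates. The upper bound $W<2\pi^2+4\pi+\tfrac{1}{25}$ is inherited directly from the sweepout. The strict lower bound $W>2\pi^2$ follows from the nontriviality of our sweepout's class in the $\grp{G}_n$-equivariant saturation: a catenoid neck comparison shows that any competing $\grp{G}_n$-equivariant family with supremum area $\leq 2\pi^2$ would have to be homotopic to a trivial family, contradicting the topology of our construction. For (iii), every meridian and the equator arises as a $1$-dimensional component of the fixed-point set of some involution in $\grp{G}_n$; since $\Sigma_t$ is arranged so that the two complementary regions receive opposite orientations across each such axis, equivariant stationarity of the min-max varifold forces it to contain each axis. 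Together with the upper area bound, the uniqueness theorems of Almgren--Calabi and Brendle, combined with a multiplicity-one theorem for equivariant one-parameter min-max, rule out limits of the form $k\Sp^2$ or $k\T^2$, so $\Gamma_n$ is a single smooth embedded minimal surface.

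\textbf{Genus, symmetry, and index.} For (i), I would compute the Euler characteristic of the sweepout surface near its max-area time directly from the $\grp{G}_n$-orbit structure of the Scherk tunnels, obtaining $\genus(\Sigma_{t_{\max}})=2n$. The only possible drop under $\grp{G}_n$-equivariant neck pinching during the min-max process is the collapse of a single $\grp{G}_n$-conjugacy class of handle pairs, reducing the genus by at most two and giving $\genus(\Gamma_n)\in\{2n,2n-2\}$. For (iv), any ambient isometry preserving $\Gamma_n$ must permute the $n$ equally spaced meridians; for $n\geq 4$ the stabilizer of this rigid configuration in $\Ogroup(4)$ coincides with $\grp{G}_n$, while the exceptional cases $n=2,3$ admit accidental symmetries that disappear once the $n$-gon becomes proper. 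For (v), I would invoke the equivariant Morse index lower bound of Marques--Neves (with subsequent refinements by Wang and Zhou), which estimates $\ind(\Gamma_n)$ from below by the number of independent $\grp{G}_n$-equivariant variations along which the area can be decreased; an explicit construction of such variations, built from dilations of the Scherk tunnels combined with rigid motions compatible with the residual symmetries, should produce at least $4n-1$.

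\textbf{Main obstacle.} The principal difficulty is establishing the strict lower bound $W>2\pi^2$ in conjunction with the quantitative upper bound $W<2\pi^2+4\pi+\tfrac{1}{25}$: the former requires showing that the $\grp{G}_n$-equivariant sweepout class is genuinely nontrivial relative to the Clifford torus, while the latter demands sharp control of the Scherk-tunnel area cost and of any residual small contributions. The ambiguity between genus $2n$ and $2n-2$ is intrinsic, reflecting the potential collapse of one symmetric handle pair in the min-max limit, and pinning down which of the two values occurs would require a finer catenoid estimate in the spirit of White and Ketover to rule out such pinching.
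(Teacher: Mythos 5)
Your overall strategy is correct at the highest level --- $\grp{G}_n$-equivariant one-parameter min-max on a sweepout fusing $\Sp^2$ and $\T^2$ --- and this matches the paper. But several of your supporting arguments have genuine gaps, some of them at exactly the points the paper identifies as the hard part.

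\textbf{Width lower bound.} You claim to prove $W > 2\pi^2$ directly from the sweepout class, invoking a vague ``catenoid neck comparison.'' This is not how the paper proceeds and it is not clear it can be made to work. The paper's width estimate is $W_\Pi > \hsd^2(\Sp^2) = 4\pi$, proved via a quantitative stability of the isoperimetric inequality in $\Sp^3$ (Lemma~\ref{lem:isoperimetric}) combined with the classification of $\grp{A}_n$-equivariant great spheres (Lemma~\ref{lem:greatspheres}) --- the point being that if the area never exceeded $4\pi + \delta$, the enclosed region would track a single $\grp{G}_n$-equivariant hemisphere, contradicting that the sweepout interpolates between complementary hemispheres. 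The bound area $> 2\pi^2$ is obtained only \emph{afterwards}: once $m=1$ and $\hsd^2(\Gamma_n) = W_\Pi > 4\pi$, Almgren's uniqueness rules out genus $0$, then Lemma~\ref{lem:genus} forces genus $\geq 2n-2 \geq 2$, and then \cite[Theorem~B]{MarquesNeves2014} delivers area $> 2\pi^2$. Your argument collapses this chain into one step without justification.

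\textbf{Genus constraint.} Your explanation of the genus ambiguity as ``collapse of a single $\grp{G}_n$-conjugacy class of handle pairs, reducing the genus by at most two'' is not an argument. Lower semicontinuity of genus under min-max gives only $\genus(\Gamma_n) \leq 2n$, and a priori any value in $\{0,1,\ldots,2n\}$ could occur. The paper's Lemma~\ref{lem:genus} is essential here: a Riemann--Hurwitz computation on the orbifold quotient $\Sigma/\sk{\refl{\Sp}^1,\Z_n}$, exploiting that $\Sigma$ contains $\Sp^1$, $\xi_0$, $\xi_{1/n}$ and that the number of orbifold points on $\Sp^1_\perp$ is constrained, shows that the only possible genera $\leq 2n$ are $0$, $2n-2$, and $2n$ (plus $4$ when $n=2$). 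Without this lemma you cannot exclude, say, genus $n$.

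\textbf{Symmetry group.} This is the most serious gap. You assert that any symmetry of $\Gamma_n$ must permute the $n$ meridians and hence lie in $\grp{G}_n$ for $n \geq 4$. This begs the question: you have no a priori control over the fixed-point set of an unknown extra symmetry, and an extra isometry preserving $\Gamma_n$ need not preserve the configuration $\Sp^1 \cup \bigcup_i \xi_{i/n}$ at all. Indeed, the paper explicitly warns (citing \cite[Remark~1.2]{FranzKetoverSchulz}) that equivariant min-max gives no control on the full symmetry group, and the case $n=2$ --- where $\Gamma_2$ is conjecturally congruent to $\xi_{2,2}$ with symmetry group of order $144 \gg 16$ --- shows extra symmetries really can appear. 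The paper's proof of statement \ref{thm:main-symmetry} is a multi-lemma umbilic-counting argument (Lemmas~\ref{lem:refls_through_ortho_circles}--\ref{lem:invariance-of-Sp1}), using the Lawson bound on the total multiplicity of umbilics together with the $n$-fold umbilics at $(0,0,\pm1,0)$ to show any hypothetical extra symmetry would force too many umbilics unless $n \leq 3$. None of this is present in your proposal.

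\textbf{Morse index.} Your plan to produce $4n-1$ decreasing directions by dilating Scherk tunnels is speculative and hard to make rigorous; in particular it is unclear these would be linearly independent or all area-decreasing. The paper instead uses a clean Montiel--Ros style argument (Proposition~\ref{prop:pyramidal}): the rotation Jacobi field vanishes on the $2n$ domains cut out by the spheres $\Xi_{(2j+1)/(2n)}$, and since it also vanishes on $\Sp^1 \subset \Sigma$ it is not a first Dirichlet eigenfunction on any domain, yielding $\ind \geq 4n-1$ via \cite[Corollary~3.2(i)]{Carlotto2025}. This approach is both more robust and more elementary than what you sketch.

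\textbf{Minor point.} Your sweepout ``degenerating to configurations of vanishing area at the two endpoints'' differs from the paper's, whose endpoints are both the equatorial $\Sp^2$ (area $4\pi$). Either choice could in principle be made to work, but the width-estimate argument then differs accordingly.
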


Theorem~\ref{thm:main} is proven in Section~\ref{sec:topology}, where we also show that $\Gamma_n$ is \emph{geometrically distinct} from any previously known embedded minimal surface in $\Sp^3$ for all $n\geq3$. 
The case of genus $2n-2$ in statement \ref{thm:main-genus} would be the result of a $\grp{G}_n$-equivariant topological degeneration during the min-max procedure, which we do not expect to happen;  
we conjecture $\Gamma_n$ to have genus $2n$ for all $2\leq n\in\N$. 
In the case $n=2$, we present evidence for a congruence with the Lawson surface $\xi_{2,2}$ of genus~$4$, 
whose symmetry group of order $144$ is much larger than the prescribed group $\grp{G}_2$ of order $16$. 
Equivariant min-max methods rarely give any control on the \emph{full} symmetry group of the resulting surfaces (cf.~\cite[Remark 1.2]{FranzKetoverSchulz}) and this fact makes statement \ref{thm:main-symmetry} a nontrivial claim. 
The Morse index growth rate stated in \ref{thm:main-index} is expected to be asymptotically sharp; 
indeed, in Conjecture~\ref{conj:index} we propose an exact formula for the index.

\paragraph{Acknowledgments.}
This project has received funding from the European Research Council (ERC) under the European Union's Horizon 2020 research and innovation programme (grant agreement No.~947923). 
We thank Alessandro Carlotto for many helpful comments and discussions.

\section{Structure of equivariant surfaces in the sphere}
\label{sec:equivariant}

In this section we introduce the group actions relevant for our construction. 
We then prove topological results which constrain the genus of any (not necessarily minimal) surface in $\Sp^3$ satisfying the symmetries we impose.  
We equip $\R^4$ with standard Cartesian coordinates $x_1,x_2,x_3,x_4$ and denote the $3$-dimensional unit sphere by 
\begin{align*}
\Sp^3&\vcentcolon=\{x\in\R^4\st x_1^2 + x_2^2 + x_3^2 + x_4^2 = 1\}.
\end{align*} 
On $\Sp^3$ we introduce spherical coordinates $\theta_1,\theta_2,\theta_3$, where $\theta_1\in\Interval{0,2\pi}$ is the azimuthal angle and $\theta_2,\theta_3\in\Interval{-\frac{\pi}{2},\frac{\pi}{2}}$ are latitudinal angles such that 
\begin{align*} 
x_1&=\cos(\theta_3)\cos(\theta_2)\cos(\theta_1),     \\
x_2&=\cos(\theta_3)\cos(\theta_2)\sin(\theta_1),     \\
x_3&=\cos(\theta_3)\sin(\theta_2),   \\
x_4&=\sin(\theta_3). 
\end{align*}
Using this convention, we understand the \emph{equatorial sphere} $\Sp^2$, its \emph{equator} $\Sp^1$, and the \emph{Clifford torus} $\T^2$ as follows (see Figure~\ref{fig:symmetry}): 
\begin{align*}
\begin{aligned}
\Sp^2&\vcentcolon=\{x\in\Sp^3\st x_4=0\}
&&=\{\theta_3=0\},\\
\Sp^1&\vcentcolon=\{x\in\Sp^3\st x_3=x_4=0\}
&&=\{\theta_2=\theta_3=0\},\\
\T^2&\vcentcolon=\{x\in\Sp^3\st x_1^2 + x_2^2 = x_3^2 + x_4^2\}
\!\!\!&&=\bigl\{\cos(\theta_2)\cos(\theta_3)=\tfrac{1}{\sqrt{2}}\bigr\}.
\end{aligned}
\end{align*}  
Given any $t\in\R$ we define
\begin{align*}
\Xi_t&\vcentcolon=
\bigl\{
x\in\Sp^3\st x_1\sin(t\pi)-x_2\cos(t\pi) = 0
\bigr\} 
=\bigl\{\theta_1=t\pi\bigr\},
\\[1ex]
\xi_t&\vcentcolon=\Xi_t\cap\Sp^2
=\bigl\{\theta_1=t\pi,~\theta_3=0\bigr\}. 
\end{align*}
Note that $\xi_t$ is a meridian in $\Sp^2$
and $\Xi_t$ is the great sphere in $\Sp^3$ intersecting $\Sp^2$ orthogonally
along $\xi_t$. 
We will also have occasion to refer to the great sphere
\begin{align}\label{eqn:Z}
\rs &\vcentcolon=\{x\in\Sp^3\st x_3=0\}
\intertext{intersecting $\Sp^2$ orthogonally along $\Sp^1$, and to the following great circle which is disjoint from $\Sp^1$:}
\label{eqn:S1perp}
\Sp^1_\perp&\vcentcolon=\{x\in\Sp^3\st x_1=x_2=0\}.
\end{align}

\pgfmathsetmacro{\rc}{pi/4} 
\pgfmathsetmacro{\npar}{3}
\pgfmathsetmacro{\angle}{-125} 
\pgfmathsetmacro{\depth}{0.66} 
\pgfmathsetmacro{\cyl}{abs(\depth*cos(\angle)/2)} 
\pgfmathsetmacro{\tcrit}{atan(1/(\cyl*tan(\angle)))}
\pgfmathsetmacro{\offset}{0.33}
\begin{figure}\centering
\begin{tikzpicture}[line cap=round,line join=round,scale=2.25,semithick]
\path[yslant=tan(\angle)](0,-\rc)arc(-90:\tcrit:\cyl/1 and \rc)
coordinate(NW)arc(\tcrit:\tcrit+180:\cyl/1 and \rc)coordinate(SW);
\path(SW)--++(2*pi,0)coordinate(SE);
\path(NW)--++(2*pi,0)coordinate(NE);
\draw(pi/\npar/2,-pi/2)--++(\angle:-\depth)--++(0,pi)--++(\angle:\depth)--cycle;
\draw(2*pi,0)--++(\angle:-\depth)--++(-2*pi,0);
\shade[top color=FarbeB!50!black!70,bottom color=FarbeB!50!black!60,middle color=white,yslant=tan(\angle)]
(NW)arc(\tcrit:\tcrit-180:\cyl/1 and \rc)--(SE)
arc(\tcrit+180:\tcrit+360:\cyl/1 and \rc)--cycle
; 
\shade[draw=FarbeB,thick,top color=FarbeB!50!black!40,bottom color=FarbeB!50!black!40,middle color=white,yslant=tan(\angle)]
(NW)arc(\tcrit:\tcrit+180:\cyl/1 and \rc)--(SE)
arc(\tcrit-180:\tcrit-360:\cyl/1 and \rc)--cycle
; 
\fill[black!33](0,-pi/2)rectangle(2*pi,pi/2);
\begin{scope}[thick] 
\draw[->](0,0)--(2*pi+\offset,0)node[above]{~$\theta_1$};
\draw[->](0,-pi/2)--(0,pi/2+\offset)node[below right,inner sep=0]{~$\theta_2$};
\draw[->](0,0)--(\angle:\depth+0.1)node[below right,inner sep=0pt]{$\vphantom{\big\vert}\theta_3$};
\end{scope}
\draw[equator](0,0)--++(2*pi,0);
\pgfmathsetmacro{\klast}{\npar-1}
\foreach\k in {1,...,\klast}{
\draw[axis](\k*pi/\npar,-pi/2)--++(0,pi)node[anchor=base,shift={(-2ex,0)},pos=0.1]{$\xi_{\frac{\k}{n}}$};
\draw[axis](\k*pi/\npar+pi,-pi/2)--++(0,pi)node[anchor=base,shift={(-2ex,0)},pos=0.1]{$\xi_{\frac{\k}{n}}$};
}
\foreach\k in {0,1,2}{
\draw[axis](\k*pi,-pi/2)--++(0,pi)node[anchor=base,shift={(-2ex,0)},pos=0.1]{$\xi_0$};
}
\shade[top color=FarbeB!75!black!45,bottom color=FarbeB!75!black!33,middle color=white,yslant=tan(\angle)](0,\rc)arc(90:\tcrit+180:\cyl/1 and \rc)--(SE)arc(\tcrit-180:90-360:\cyl/1 and \rc)--cycle; 
\begin{scope}[FarbeB,thick]
\coordinate (E) at (2*pi,-\rc);
\coordinate (F) at (pi/\npar/2,-\rc);
\coordinate (G) at (pi/\npar/2, \rc);
\draw[yslant=tan(\angle),dotted](0,-\rc)arc(-90: 90:\cyl/1 and \rc);
\draw[yslant=tan(\angle)]       (0, \rc)arc( 90:270:\cyl/1 and \rc);
\draw[yslant=tan(\angle)]       (E)     arc(-90:270:\cyl/1 and \rc); 
\draw(SW)--(SE)node[above,midway](Clifford){Clifford torus $\T^2$};
\draw[dotted](NW)--(NE);
\draw[semithick,yslant=tan(\angle),dotted](F)arc(-90: 90:\cyl/1 and \rc);
\draw[semithick,yslant=tan(\angle)]       (G)arc( 90:270:\cyl/1 and \rc);
\draw[dashed,very thin](0,\rc)--++(2*pi,0)(0,-\rc)--++(2*pi,0);
\draw[dashed,very thin]
(\angle:\depth/2)--++(2*pi,0)
(\angle:-\depth/2)--++(2*pi,0)
;
\end{scope}
\foreach\k in {0,...,\klast}{
\draw[axis,dotted](\k*pi/\npar,-pi/2)--++(0,pi);
\draw[axis,dotted]({(\k+1)*pi/\npar+pi},-pi/2)--++(0,pi);
}
\draw[equator,dotted](0,0)--++(2*pi,0)node[midway,above,inner sep=1pt](equator){equator $\Sp^1$};
\draw(pi,-pi/2)node[below,inner sep=1pt]{south pole $\mathrlap{(0,0,-1,0)}$}
(pi, pi/2)node[above,inner sep=1pt]{north pole $\mathrlap{(0,0,1,0)}$};  
\draw[axis,dotted](pi,-pi/4)+(0,1.2ex)--(pi,0)(pi,1.2ex)--(pi,pi/2);
\draw(pi,pi/4+pi/8)node[black,fill=black!33,inner sep=1pt]{equatorial $\Sp^2$};
\draw(pi/\npar/2,-pi/2)--++(\angle:\depth)--++(0,pi)
--++(\angle:-\depth)node[below right,yslant=tan(\angle),yscale=1/\depth]{$\Xi_{\frac{1}{2n}}$}
;
\draw[dotted](pi/\npar/2,-pi/2)--++(\angle:-\depth)--++(0,pi)--++(\angle:\depth)--cycle;
\draw(\angle:\depth)--++(2*pi,0)node[above left,xslant=tan(-90-\angle),xscale=sqrt(1+tan(-90-\angle)^2)]{$Z$~}--++(\angle:-2*\depth);
\draw[dotted]
(\angle:\depth/2)--(\angle:-\depth)--++(2*pi,0)  
(pi/\npar/2,0)--++(\angle:-\depth) 
(pi/\npar/2,0)--++(\angle: \depth)
; 
\draw plot[hdash](0,pi/4)node[left]{$\frac{\pi}{4}$};
\draw plot[hdash](0,pi/2)node[left]{$\frac{\pi}{2}$};
\draw plot[plus](\angle:\depth/2)node[left]{$\frac{\pi}{4}$};
\draw plot[hdash](0,-pi/2)node[left]{$-\frac{\pi}{2}$};
\end{tikzpicture}
\caption{The submanifolds $\Sp^2,\Sp^1,\T^2,\xi_{\frac{i}{n}},\Xi_{\frac{1}{2n}},Z\subset\Sp^3$ in spherical coordinates $\theta_1,\theta_2,\theta_3$ for $n=\npar$.}%
\label{fig:symmetry}%
\end{figure}
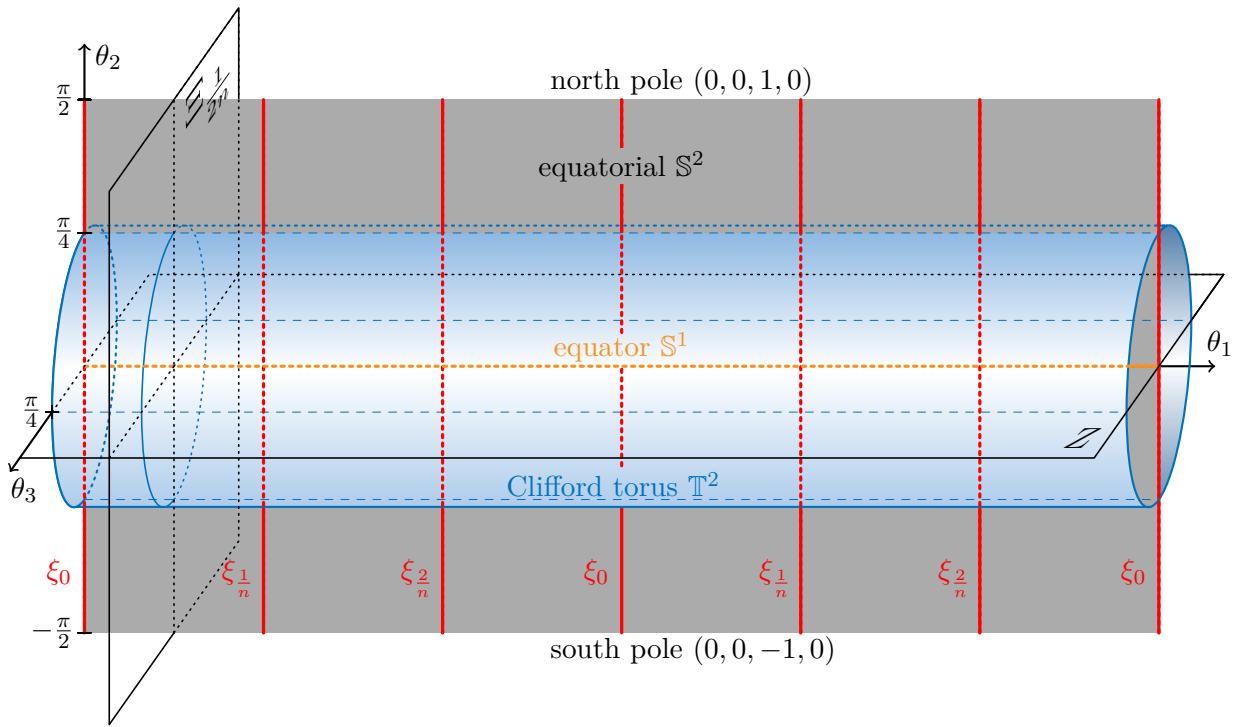
 
Given any subset $X\subset\Sp^3\subset\R^4$,
we define $\refl{X}\in\Ogroup(4)$ as the linear extension of
\begin{align*}
\refl{X}(p)=\begin{cases}
\hphantom{-}p & \text{ if $p\in\Span_{\R^4}(X)$, } \\	
-p & \text{ if $p\in\bigl(\Span_{\R^4}(X)\bigr)^\perp$. } \\	
\end{cases}
\end{align*} 
In practice $X$ will always be a great circle or a great sphere; for example 
\begin{align}\label{eqn:20250522} 
\underline{\Sp}^1(x)&=(x_1,x_2,-x_3,-x_4), &
\underline{\xi}_0(x)&=(x_1,-x_2,x_3,-x_4). 
\end{align}
Given a subset (or list of elements) $\omega$ in $\Ogroup(4)$ we understand $\sk{\omega}$ as the subgroup of $\Ogroup(4)$ which is generated by $\omega$. 
With this notation, we define the following concrete subgroups of isometries. 
\begin{align}\label{eqn:group}
\grp{A}_n&\vcentcolon=\sk{\refl{\xi}_0, ~ \refl{\Xi}_{\frac{1}{2n}}}, & 
\grp{G}_n&\vcentcolon=\sk{\grp{A}_n, ~ \refl{\Sp}^1}. 
\end{align}
The group $\grp{A}_n$ is isomorphic to the abstract dihedral group of order $4n$ and its action on $\Sp^3$ restricts to an action on the great sphere $\rs$. 
This restriction induces a monomorphism $\grp{A}_n \hookrightarrow \Ogroup(3)$, whose image is called the \emph{antiprismatic group} of order $4n$ (see~\cite{FranzKetoverSchulz,Carlottob,Carlotto}).

$\grp{A}_n$ consists of $n$ rotations along $\Sp^1$, the $n$ reflections $\refl{\Xi}_{(2i+1)/(2n)}$ for $i \in \Z$, the $n$ reflections (or half turns) $\refl{\xi}_{i/n}$ for $i \in \Z$, and $n$ transformations which are rotations along $\Sp^1$ and reflections on~$\Sp^1_\perp$.  
Every element of $\grp{A}_n$ preserves each (hemispherical) component of $\Sp^2 \setminus \Sp^1$
(and each of the two circles in $\Sp^2 \cap \T^2$),
while $\grp{G}_n$ contains transformations exchanging these two components (and circles).
We also define the cyclic subgroup
\begin{align}\label{eqn:Zn}
\Z_n\vcentcolon=\sk{\refl{\xi}_{\frac{1}{n}}\circ\refl{\xi}_0}
=\sk{\refl{\Xi}_{\frac{1}{2n}} \circ \refl{\Xi}_{-\frac{1}{2n}} } 
\end{align}
consisting of rotations along $\Sp^1$ (fixing $\Sp^1_\perp$ pointwise)
through multiples of angle $2\pi/n$. 

\begin{definition}\label{defn:equivariant}
Given a subgroup $G$ of $\Ogroup(4)$, a submanifold $\Sigma\subset\Sp^3$ is called \emph{$G$-equivariant} if the (natural) group action of $G$ on $\Sp^3$ restricts to an action on $\Sigma$ (which commutes with the inclusion map  $\Sigma\hookrightarrow\Sp^3$).  
\end{definition}

The following lemma is fundamental for the topological control in the proof of Theorem \ref{thm:main}. 

\begin{lemma}\label{lem:genus}
Given $2\leq n\in\N$, let $\Sigma\subset\Sp^3$ be any smooth, closed, connected, embedded, $\sk{\underline{\Sp}^1,\refl{\xi}_0,\refl{\xi}_{\frac{1}{n}}}$-equivariant (not necessarily minimal) surface of genus $g\leq2n$ which contains
the great circles $\Sp^1,\xi_0,\xi_{\frac{1}{n}}$.   
Then one of the following statements holds. 
\begin{enumerate}[label={\normalfont(\alph*)}]
\item\label{lem:genus-a} $\Sigma\cap\Sp^1_\perp$ contains exactly $2$ points and $g\in\{0,2n\}$. 
\item\label{lem:genus-b} $\Sigma\cap\Sp^1_\perp$ contains exactly $6$ points and $g=2n-2$.  
\item\label{lem:genus-c} $\Sigma\cap\Sp^1_\perp$ contains exactly $10$ points, $n=2$ and $g=4$.  
\end{enumerate}
\end{lemma}

\begin{proof}
We recall that $\sk{\underline{\Sp}^1,\Z_n}$ is a subgroup of $\sk{\underline{\Sp}^1,\refl{\xi}_0,\refl{\xi}_{\frac{1}{n}}}$. 
The singular locus of the $\Z_n$-action on $\Sp^3$ is the great circle $\Sp^1_\perp$ defined in \eqref{eqn:S1perp}, and thus disjoint from the singular locus of the $\underline{\Sp}^1$-action. 
By \cite[Lemma~3.4]{Ketover2016Equivariant} the surface $\Sigma$ either contains $\Sp^1_\perp$ or $\Sigma$ intersects $\Sp^1_\perp$ orthogonally in $j\in\N\cup\{0\}$ many points.  
The first case is ruled out by the assumption that $\Sigma$ is smooth containing 
$\xi_0\cup\xi_{\frac{1}{n}}$. 
Indeed, the great circles $\xi_0,\xi_{\frac{1}{n}}\subset\Sigma$ both intersect $\Sp^1_\perp$ orthogonally at the two points $(0,0,\pm1,0)$. 
In particular, $j\geq2$.  
Suppose that $j\geq3$.  
Then $(\Sigma\cap\Sp^1_\perp)\setminus\{(0,0,\pm1,0)\}\ni p$ is nonempty, i.\,e.~there exists some  $-1<a<1$ such that 
\begin{align*}
p&=\bigl(0,0,a,\sqrt{1-a^2}\bigr)\in \Sigma\cap\Sp^1_\perp. 
\intertext{%
Recalling the symmetries \eqref{eqn:20250522} which preserve $\Sigma$ and $\Sp^1_\perp$ as sets, we also have 
}
\underline{\Sp}^1(p)&=\bigl(0,0,-a,-\sqrt{1-a^2}\bigr)\in \Sigma\cap\Sp^1_\perp, \\
\underline{\xi}_0(p)&=\bigl(0,0,\hphantom{-}a,-\sqrt{1-a^2}\bigr)\in \Sigma\cap\Sp^1_\perp, \\
\underline{\Sp}^1\circ\underline{\xi}_0(p)&=\bigl(0,0,-a,\hphantom{-}\sqrt{1-a^2}\bigr)\in \Sigma\cap\Sp^1_\perp.
\end{align*}
We claim that $a\neq0$ and thus $j=2+4k$ for some nonnegative integer $k$. 
Indeed, if $p_0=(0,0,0,1)\in\Sigma$ then $\Sp^1_\perp$ intersects $\Sigma$ as well as the great circle $c\vcentcolon=\{x\in\Sp^3\st x_2=x_3=0\}$ orthogonally at $p_0$. 
Therefore, $c$ is tangent to $\Sigma$ in $p_0$ and since $\Sigma$ is $\underline{c}=\underline{\Sp}^1\circ\underline{\xi}_0$-equivariant, we have $c\subset\Sigma$ by \cite[Lemma~3.4\;(2)]{Ketover2016Equivariant}. 
However, the three great circles $c,\xi_0,\Sp^1\subset\Sigma$ intersect pairwise orthogonally at the point $(1,0,0,0)$ and this contradicts the smoothness of $\Sigma$.  

The quotient $\Sigma/\sk{\underline{\Sp}^1,\Z_n}$ has an orbifold structure (cf.~\cite[Prop.~13.2.1]{Thurston2022})
and by \cite[Prop.~13.3.1]{Thurston2022} the underlying space is a topological surface $\Sigma'$ with one boundary component arising from the fact that $\Sp^1\subset\Sigma$.  
Since the group $\sk{\underline{\Sp}^1,\Z_n}$ has order $2n$ 
and since $\Sigma/\sk{\underline{\Sp}^1,\Z_n}$ has $j/2$ orbifold points of order $n$, the general Riemann--Hurwitz formula (cf.~\cite[§\,13.3]{Thurston2022}) 
implies 
\begin{align}\label{eqn:Riemann-Hurwitz}
\chi(\Sigma)
&=2n\,\chi_{\text{orb}}\bigl(\Sigma/\sk{\underline{\Sp}^1,\Z_n}\bigr)
=2n\Bigl(\chi(\Sigma')-\tfrac{j}{2}(1-\tfrac{1}{n})\Bigr),
\end{align}
where $\chi_{\text{orb}}$ denotes the orbifold Euler number and $\chi(\Sigma)=2-2g$ and $\chi(\Sigma')=1-2g'$ for some $g'\in\N\cup\{0\}$ the Euler-Characteristics of the (topological) surfaces $\Sigma$ and $\Sigma'$.  
Recalling $j=2+4k$, equation \eqref{eqn:Riemann-Hurwitz} implies 
$2-2g=2n\bigl(1-2g'-(1+2k)(1-\tfrac{1}{n})\bigr)$, or equivalently 
\begin{align}\label{eqn:genus}
g&=2ng'+2k(n-1).
\end{align}
All variables in \eqref{eqn:genus} are nonnegative integers. 
Moreover, $n\geq2$ and $g\leq2n$ by assumption. 
Therefore, necessarily $g'\in\{0,1\}$ and $k\in\{0,1,2\}$.  
The proof concludes by distinguishing the following cases. 
\begin{itemize}[nosep]
\item If $k=0$ (and $g'\in\{0,1\}$) then $j=2$ and $g\in\{0,2n\}$ as in statement \ref{lem:genus-a}. 
\item If $k=1$ then $g'=0$, $j=6$ and $g=2n-2$ as in statement \ref{lem:genus-b}. 
\item If $k=2$ then $g'=0$, $j=10$ and $g=4n-4$, which violates the assumption $g\leq2n$ unless $n=2$ as in statement \ref{lem:genus-c}. 
\qedhere 
\end{itemize}
\end{proof}

We emphasise that the cases \ref{lem:genus-a}--\ref{lem:genus-c} in Lemma~\ref{lem:genus} are all topologically possible (see Figure~\ref{fig:wedge} for a visualization of the first two cases). 
Under the stronger assumption of $\grp{G}_n$-equivariance we analyze how the corresponding great spheres of symmetry may intersect the surface in question:

\begin{figure}[t]
\pgfmathsetmacro{\thetaO}{60}
\pgfmathsetmacro{\phiO}{90+60}
\tdplotsetmaincoords{\thetaO}{\phiO}
%%% Use only this parameter to rescale both figures: 
\pgfmathsetmacro{\globalscale}{2.5}
%%%
\pgfmathsetmacro{\surfacewidth}{\globalscale*6}
\pgfmathsetmacro{\npar}{5}
\pgfmathsetmacro{\angle}{90/\npar}
\begin{tikzpicture}[scale=\globalscale,tdplot_main_coords,line cap=round,line join=round,semithick]
\pgfmathsetmacro{\zmax}{1.5}
\pgfmathsetmacro{\xmax}{3.25}
\tdplotsetthetaplanecoords{-90/\npar}
\draw[tdplot_rotated_coords](-\zmax,0,0)--(\zmax,0,0)--++(0,\xmax,0)--++(-2*\zmax,0,0)coordinate(F-)--cycle;
\draw(0,0,0)node[inner sep=0]{\includegraphics[width=\surfacewidth cm]{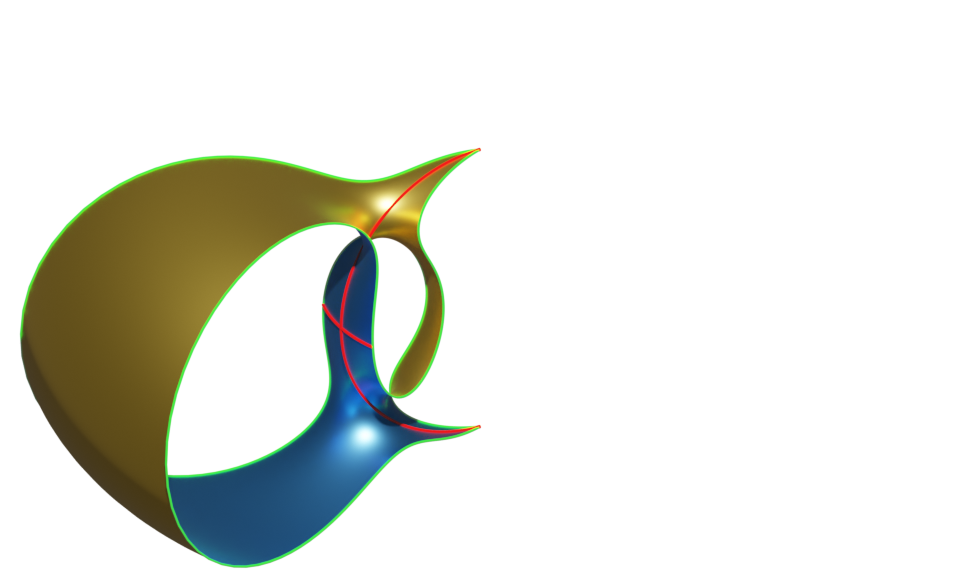}};
\pgfresetboundingbox
\path[tdplot_rotated_coords](-\zmax,0,0)--(\zmax,0,0)--++(0,\xmax,0)--++(-2*\zmax,0,0)coordinate(F-)--cycle;
\tdplotsetthetaplanecoords{\angle}
\draw[tdplot_rotated_coords](-\zmax,0,0)--(\zmax,0,0)--++(0,\xmax,0)--++(-2*\zmax,0,0)coordinate(F+)--cycle;
\draw[very thick,FarbeA](0,0,-\zmax)--(0,0,\zmax)node[right,midway]{$\Sp^1_\perp$};
\draw[red] plot[bullet]({cos(\angle)},{sin( \angle)},0)node[right]{$p_+$};
\draw[red] plot[bullet]({cos(\angle)},{sin(-\angle)},0)node[left]{$p_-$};
\draw[red,tdplot_screen_coords] (-0.7,-1)node[above]{$\xi_0$};
\draw[green!75!black,tdplot_screen_coords] (-1.67,-0.35)node[]{$\alpha$};
\tdplottransformmainscreen{-cos(\angle)}{-sin(\angle)}{0}
\pgfmathsetmacro{\VecHx}{\tdplotresx}
\pgfmathsetmacro{\VecHy}{\tdplotresy}
\tdplottransformmainscreen{0}{0}{1}
\pgfmathsetmacro{\VecVx}{\tdplotresx}
\pgfmathsetmacro{\VecVy}{\tdplotresy}
\node[cm={\VecHx,\VecHy,\VecVx,\VecVy,(0,0)},
inner sep=1ex,above right]at(F+){$F_+\subset\Xi_{\frac{1}{2n}}$};
\tdplottransformmainscreen{-cos(\angle)}{-sin(-\angle)}{0}
\pgfmathsetmacro{\VecHx}{\tdplotresx}
\pgfmathsetmacro{\VecHy}{\tdplotresy} 
\node[cm={\VecHx,\VecHy,\VecVx,\VecVy,(0,0)},
inner sep=1ex,above right]at(F-){$F_-\subset\Xi_{-\frac{1}{2n}}$}; 
\end{tikzpicture} 
%%%%%%
\hfill
%%%%%%
\pgfmathsetmacro{\surfacewidth}{\globalscale*2}
\begin{tikzpicture}[scale=\globalscale,tdplot_main_coords,line cap=round,line join=round,semithick]
\pgfmathsetmacro{\zmax}{2}
\pgfmathsetmacro{\xmax}{1.75}
\tdplotsetthetaplanecoords{-90/\npar}
\draw[tdplot_rotated_coords](-\zmax,0,0)--(\zmax,0,0)--++(0,\xmax,0)coordinate(F-)--++(-2*\zmax,0,0)--cycle;
\draw(0,0,0)node[inner sep=0]{\includegraphics[width=\surfacewidth cm]{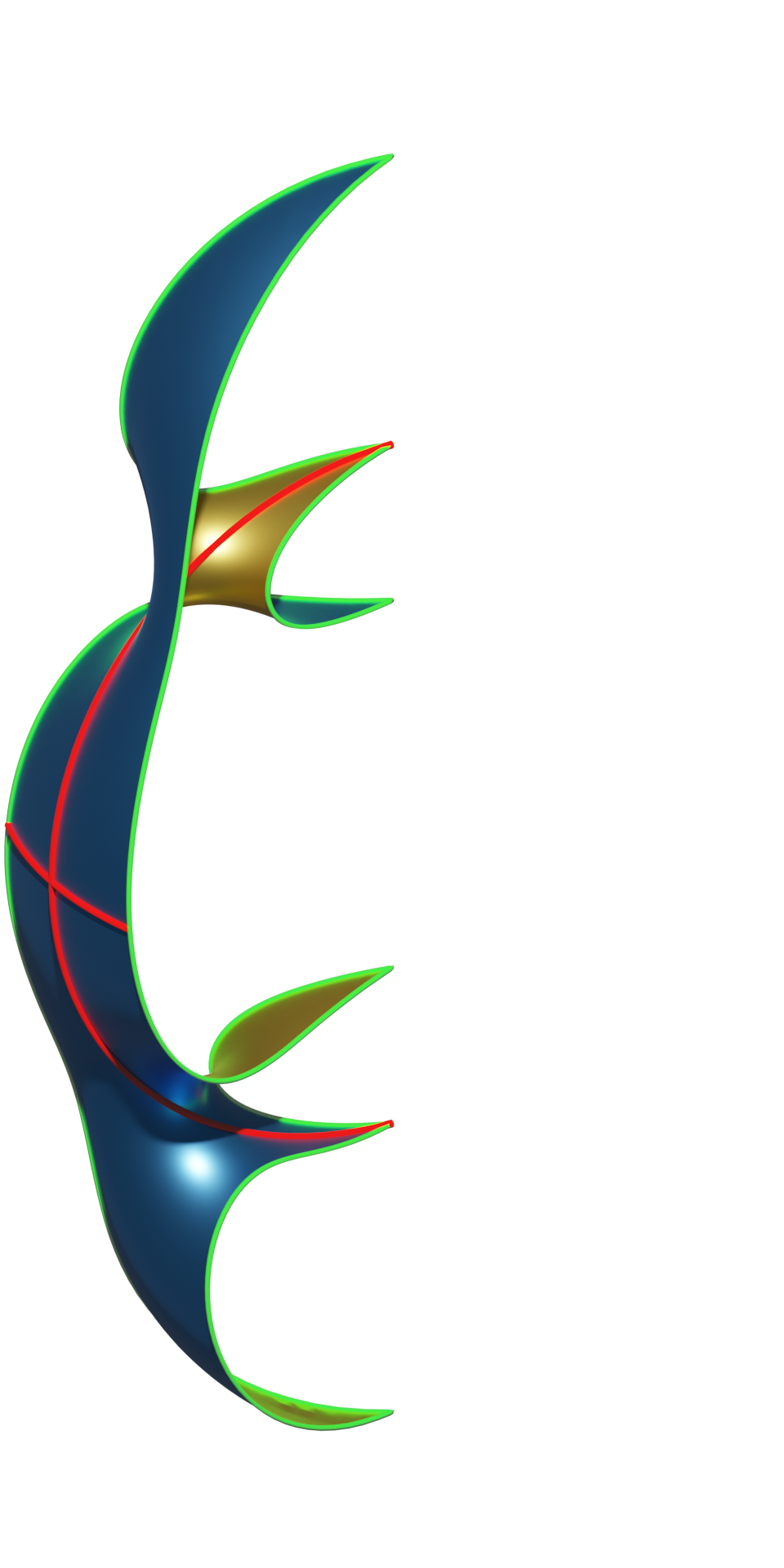}};
\pgfresetboundingbox
\path[tdplot_rotated_coords](-\zmax,0,0)--(\zmax,0,0)--++(0,\xmax,0)coordinate(F-)--++(-2*\zmax,0,0)--cycle;
\tdplotsetthetaplanecoords{\angle}
\draw[tdplot_rotated_coords](-\zmax,0,0)--(\zmax,0,0)--++(0,\xmax,0)--++(-2*\zmax,0,0)coordinate(F+)--cycle;
\draw[very thick,FarbeA](0,0,-\zmax)--(0,0,\zmax)node[right,midway]{$\Sp^1_\perp$};
\draw[red] plot[bullet]({cos(\angle)},{sin( \angle)},0)node[right]{$p_+$};
\draw[red] plot[bullet]({cos(\angle)},{sin(-\angle)},0)node[above]{$p_-$};
\draw[red,tdplot_screen_coords] (-0.5,-0.95)node[]{$\xi_0$};
\draw[green!75!black,tdplot_screen_coords] (-0.5,0.2)node[]{$\alpha$}; 
\tdplottransformmainscreen{-cos(\angle)}{-sin(\angle)}{0}
\pgfmathsetmacro{\VecHx}{\tdplotresx}
\pgfmathsetmacro{\VecHy}{\tdplotresy}
\tdplottransformmainscreen{0}{0}{1}
\pgfmathsetmacro{\VecVx}{\tdplotresx}
\pgfmathsetmacro{\VecVy}{\tdplotresy}
\node[cm={\VecHx,\VecHy,\VecVx,\VecVy,(0,0)},
inner sep=1ex,above right]at(F+){$F_+\subset\Xi_{\frac{1}{2n}}$};
\tdplottransformmainscreen{-cos(\angle)}{-sin(-\angle)}{0}
\pgfmathsetmacro{\VecHx}{\tdplotresx}
\pgfmathsetmacro{\VecHy}{\tdplotresy} 
\node[cm={\VecHx,\VecHy,\VecVx,\VecVy,(0,0)},
inner sep=1ex,below right]at(F-){$F_-\subset\Xi_{-\frac{1}{2n}}$};
\end{tikzpicture}
\caption{Left: 
Intersection of a $\grp{G}_n$-equivariant surface of genus $2n$ with the spherical lune $V\subset\Sp^3$. \\
Right: 
Intersection of a $\grp{G}_n$-equivariant surface of genus $2n-2$ with the spherical lune $V$. 
(The right image displays a \emph{nonminimal} surface in $\Sp^3$.)
}%
\label{fig:wedge}%
\end{figure}

\begin{corollary}
\label{cor:hemispheres}
Given $2 \leq n \in \N$ let
$\Sigma \subset \Sp^3$
be any smooth, closed, connected, embedded, 
$\grp{G}_n$-equivariant surface 
that contains the great circles $\xi_0$ and $\Sp^1$
and has genus $g\in\{1,\ldots,2n\}$.
Then the intersection of $\Sigma$
with the closure of any one of the $2n$ connected components of 
$\bigl(\bigcup_{i \in \Z} \Xi_{(2i+1)/(2n)}\bigr) \setminus \Sp^1_\perp$
(each a hemisphere)
is the union of $k$ pairwise disjoint embedded arcs $\alpha_1, \ldots, \alpha_k$,
each intersecting $\Sp^1_\perp$ orthogonally at two points,
and one of the following statements holds. 
\begin{enumerate}[label={\normalfont(\alph*)},nosep]
\item \label{lem:hemispheres:one} $k=1$  and $g=2n$,
\item \label{lem:hemispheres:three} $k=3$ and $g=2n-2$,
\item \label{lem:hemispheres:five} $k=5$,  $n=2$ and $g=4$.
\end{enumerate}
In every case,
for each $i \in \Z$
the union
$\bigcup_{j=1}^k \alpha_j \cup \refl{\xi}_{i/n}\bigcup_{j=1}^k \alpha_j$
is an embedded, connected, closed curve which is smooth at least away from $\Sp^1_\perp$.
\end{corollary}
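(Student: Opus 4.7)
My plan is to combine Lemma~\ref{lem:genus} with a transversality argument for $\Sigma\cap\Xi_{1/(2n)}$ based on \cite[Lemma~3.4]{Ketover2016Equivariant}, then to glue arcs via the reflections $\refl{\xi}_{i/n}$. Since $\Sigma$ is $\Z_n$-equivariant with $\Z_n\subset\grp{G}_n$ from \eqref{eqn:Zn} and contains $\xi_0$, it contains every meridian $\xi_{i/n}$ for $i\in\Z$, and hence the hypotheses of Lemma~\ref{lem:genus} are satisfied. The constraint $g\geq1$ excludes the $g=0$ subcase of \ref{lem:genus-a}, leaving precisely the three cases of the corollary with $j\vcentcolon=\#(\Sigma\cap\Sp^1_\perp)\in\{2,6,10\}$ and the stated values of~$g$.

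By $\Z_n$-conjugacy it suffices to study $\Xi\vcentcolon=\Xi_{1/(2n)}$. Because $\Sigma$ is connected of genus $g\geq1$ and $\Xi$ is a totally geodesic $2$-sphere, $\Xi\not\subset\Sigma$. Then $\refl{\Xi}_{1/(2n)}$-equivariance (with $\refl{\Xi}_{1/(2n)}\in\grp{A}_n$) together with \cite[Lemma~3.4]{Ketover2016Equivariant} implies that $\Sigma\cap\Xi$ is a smooth embedded $1$-submanifold of the $2$-sphere $\Xi$, meeting $\Xi$ orthogonally in $\Sp^3$. Since $\Sigma$ meets $\Sp^1_\perp\subset\Xi$ orthogonally at each of the $j$ points (as established in the proof of Lemma~\ref{lem:genus}), the $1$-manifold $\Sigma\cap\Xi$ crosses $\Sp^1_\perp$ transversally and orthogonally within $\Xi$ at precisely these $j$ points. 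For either closed hemispherical component $F_+$ of $\Xi\setminus\Sp^1_\perp$, the intersection $\Sigma\cap F_+$ is therefore a compact $1$-manifold with $j=2k$ orthogonal boundary points on $\Sp^1_\perp$, decomposing into $k$ pairwise disjoint embedded arcs $\alpha_1,\ldots,\alpha_k$ together with $\ell\geq0$ closed loops contained in the open hemisphere.

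The main technical obstacle is to show $\ell=0$. My plan is to combine the Euler-characteristic identity $\chi(\Sigma)=2\,\chi(\Sigma/\refl{\Xi}_{1/(2n)})$ with a Riemann--Hurwitz bookkeeping analogous to that in Lemma~\ref{lem:genus}, exploiting the presence of $\Sp^1$ and the meridians in $\Sigma$, to pin down the total number of circles in $\Sigma\cap\Xi$ to be exactly $k$ and thereby rule out any additional closed loops.

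Finally, for the gluing statement, $\refl{\xi}_{i/n}\in\grp{A}_n$ fixes the two poles $(0,0,\pm1,0)\in\Sp^1_\perp$, acts as a reflection on $\Sp^1_\perp\cong\Sp^1$, and maps $\Xi_{1/(2n)}$ to the distinct great sphere $\Xi_{2i/n-1/(2n)}$ sharing $\Sp^1_\perp$. Hence $\refl{\xi}_{i/n}(F_+)$ meets $F_+$ only along $\Sp^1_\perp$, and since $\Sigma$ is $\refl{\xi}_{i/n}$-equivariant, each endpoint of an $\alpha_j$ is the endpoint of exactly one reflected arc. It follows that $\bigcup_{j=1}^k\alpha_j\cup\refl{\xi}_{i/n}\bigcup_{j=1}^k\alpha_j$ is a closed $1$-manifold, embedded by the disjointness of the arcs and their reflections away from $\Sp^1_\perp$, and smooth off $\Sp^1_\perp$ because $\refl{\xi}_{i/n}$ is a global isometry preserving $\Sigma$. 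Connectedness amounts to verifying that the induced gluing permutation on the $j$ endpoints consists of a single cycle, which I expect to read off from the explicit layout of these points derived in the proof of Lemma~\ref{lem:genus}.
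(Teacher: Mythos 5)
Your proposal correctly reduces to Lemma~\ref{lem:genus}, correctly identifies that $\Sigma\cap\Xi_{1/(2n)}$ must intersect $\Xi_{1/(2n)}$ orthogonally (and hence decomposes the hemisphere intersection into arcs with endpoints on $\Sp^1_\perp$ plus possibly some interior closed loops), and correctly describes the gluing set-up. But you leave two essential points as ``plans'': ruling out interior closed loops ($\ell=0$), and proving that the glued curve is connected (i.e.\ that the permutation is a single cycle). These are not minor bookkeeping details --- they are the content of the corollary beyond Lemma~\ref{lem:genus}, and the vague appeal to ``Riemann--Hurwitz bookkeeping analogous to Lemma~\ref{lem:genus}'' and ``reading off the explicit layout'' does not obviously close them. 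In particular, Riemann--Hurwitz applied to the quotient by the reflection $\refl{\Xi}_{1/(2n)}$ alone gives $\chi(\Sigma)=2\chi(\Sigma/\refl{\Xi})$, which carries no information about the number of fixed circles (each contributes zero to $\chi$), so it does not control $\ell$ or the cycle structure by itself.

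The paper closes exactly these gaps with a single argument you did not identify: it works with the lune $V$ bounded by the two adjacent faces $F_\pm$, applies Gauss--Bonnet once to all of $\Sigma$ (to compute $\int_{\Sigma\cap V}K$) and once to the Lipschitz domain $\Sigma\cap V$ with its geodesic polygonal boundary having $j$ corners of angle $\pi/n$, obtaining the relation $g=n(2\gamma+\beta-2)+(n-1)\tfrac{j}{2}+1$ in \eqref{eqn:GB} between the genus $g$, the genus $\gamma$ and number of boundary components $\beta$ of $\Sigma\cap V$, and $j=|\Sigma\cap\Sp^1_\perp|$. Combined with the assumed bound $g\le 2n$ and a symmetry-forcing argument (if $\beta\ge 2$ then $\refl{\Sp}^1$ and $\refl{\xi}_0$ multiply components, pushing $\beta$ to $3$ or $5$ and making $g$ too large), this yields $\beta=1$. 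That single fact simultaneously kills the closed loops and establishes connectedness of $\bigcup_j\alpha_j\cup\refl{\xi}_{i/n}\bigcup_j\alpha_j$ (since $\alpha\vcentcolon=\partial(\Sigma\cap V)$ is one closed curve invariant under $\refl{\xi}_0$, and any $\refl{\xi}_{i/n}$ differs from $\refl{\xi}_0$ by a map fixing $\Sp^1_\perp$ pointwise). You should adopt this Gauss--Bonnet-on-the-lune step; without it your proof is incomplete.
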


\begin{proof}
We use arguments similar to ones used at the beginning of \cite[§\,4]{KW-lchar}.
Let $\lune$ be the closure of the connected component of
$
  \Sp^3 
  \setminus
  \bigcup_{i \in \Z}
    \Xi_{(2i+1)/(2n)}
$
containing $(1, 0, 0, 0)$;
it is a spherical wedge
(or dihedron or lune)
whose boundary $\partial\lune$
is the union of two hemispherical faces,
$F_+ \subset \Xi_{1/(2n)}$ and $F_- \subset \Xi_{-1/(2n)}$,
meeting at angle $\pi/n$ along $\Sp^1_\perp$.
Note that $\partial\lune$ is homeomorphic to $\Sp^2$
and $\refl{\Sp}^1(\lune)=\refl{\xi}_0(\lune)=\lune$.

Since $\Sp^1$ lies entirely on $\Sigma$
and has nonempty intersection with each face of $\lune$, 
we have $F_+ \cap \Sigma \neq \emptyset$
and $F_- \cap \Sigma \neq \emptyset$.
Since each face is contained in a sphere $\Xi$
of symmetry for $\Sigma$, it can intersect $\Sigma$ at any given point only orthogonally:  
In the (hypothetical) tangential case
$\Sigma$ is (using the embeddedness) locally a graph over $\Xi$,
so in fact, being $\refl{\Xi}$-equivariant, locally contained in $\Xi$.
Thus the set of points of tangential intersection
is both open and closed
(using the assumption that $\Sigma$ is smooth),
but $\Sigma$ is closed, connected,
and of strictly positive genus, so not a sphere. 

We conclude that $\Sigma \cap \partial\lune = \partial (\Sigma \cap \lune)$
is a union of pairwise disjoint embedded closed curves,
each a geodesic on $\Sigma$ except at points on $\Sp^1_\perp$,
which can only meet $\Sigma \cap \partial\lune$ orthogonally.
Furthermore, from the connectedness of $\Sigma$
and again the fact that each face of $\lune$
lies in a sphere of symmetry,
it follows that $\Sigma \cap \lune$ is also connected;
indeed the restriction
$\Pi|_\lune$ of the canonical projection
$
  \Pi
  \colon
  \Sp^3
  \to
  \Sp^3 / \sk{\refl{\Xi}_{1/(2n)}, \refl{\Xi}_{-1/(2n)}}
$
is a homeomorphism,
so we can apply $(\Pi|_\lune)^{-1} \circ \Pi$
to any path on $\Sigma$ to obtain one in $\Sigma \cap \lune$.

The equatorial great circle $\Sp^1$ intersects $F_+$ at a single point $p_+$ and $F_-$ at a single point $p_- = \refl{\xi}_0(p_+)$, as visualized in Figure \ref{fig:wedge}. 
Since $\Sp^1 \subset \Sigma$, we have $\{p_+, p_-\} \subset \partial (\Sigma \cap \lune)$.
Let $\alpha$ be the connected component of $\partial (\Sigma \cap \lune)$ through $p_+$. 
Then $\alpha$,
being an embedded closed piecewise smooth curve,
cuts the topological two-sphere $\partial\lune$ into two connected components.
Since $\refl{\Sp}^1$ is a symmetry of $\Sigma$ which fixes $p_+$,
it must also preserve $\alpha$
and exchange the two components of $(\partial\lune)\setminus\alpha$. 
On the other hand,
$\refl{\Sp}^1(p_-)=p_-$ and $\refl{\Sp}^1(\Sp^1_\perp) = \Sp^1_\perp$,
so we must have
$p_- \in \alpha$ and $\alpha \cap \Sp^1_\perp \neq \emptyset$.
Since $\refl{\xi}_0(p_-)= p_+$,
we also know that $\refl{\xi}_0(\alpha)=\alpha$.
In particular the set $\alpha \cap \Sp^1_\perp$
has cardinality at least two
and is a union of pairs of antipodal points.

Since the great spheres $\{\Xi_{(2i+1)/(2n)}\}_{i\in\Z}$ partition 
$\Sigma$ into $2n$ regions (each isometric to $V\cap\Sigma$), with the intersections having measure zero, the Gauss--Bonnet theorem applied to $\Sigma$ yields
\begin{equation*}
  2n \int_{\Sigma \cap \lune} K
  =
  \int_\Sigma K
  =
  2\pi(2-2g), 
\end{equation*}
where $K$ denotes the Gauss curvature of $\Sigma$. 
On the other hand,
since the faces of $\lune$ lie in spheres of symmetry for $\Sigma$,
the boundary $\partial(\Sigma \cap \lune)$ is a geodesic polygon in $\Sigma$,
with vertices on $\Sp^1_\perp$, each of angle $\pi/n$,
and the Gauss--Bonnet theorem applied to $\Sigma \cap \lune$ yields
\begin{equation*}
  \int_{\Sigma \cap \lune} K
    +j\Bigl(\pi-\frac{\pi}{n}\Bigr)
  =  2\pi(2-2\gamma-\beta),
\end{equation*}
where $\gamma$ and $\beta$ denote the genus and the number of boundary components of $\Sigma \cap \lune$ respectively, and where $j\in\{2,6,10\}$ is the cardinality of $\Sigma \cap \Sp^1_\perp$ as stated in Lemma~\ref{lem:genus}. 
Thus we obtain the relation
\begin{equation}
\label{eqn:GB}
  g=n(2\gamma + \beta - 2)
    + (n-1)\frac{j}{2}
    + 1.
\end{equation} 
Suppose $\beta \geq 2$.
Then there is a component $\alpha'$ of $\Sigma \cap \partial\lune$ lying in one component of $(\partial\lune) \setminus \alpha$.
Thus $\refl{\Sp}^1(\alpha')$ is a third distinct component of $\Sigma \cap \partial\lune$.
If $\alpha'$ meets $\Sp^1_\perp$,
it does so in at least two points,
as then does $\refl{\Sp}^1(\alpha')$,
and so we have $\beta \geq 3$ and $j \geq 6$,
but then \eqref{eqn:GB} implies
$g \geq 4n-2$, violating the assumed upper bound on the genus.
If instead $\alpha'$ does not meet $\Sp^1_\perp$,
then we have two additional distinct components $\refl{\xi}_0(\alpha')$ and $\refl{\xi}_0\circ\refl{\Sp}^1(\alpha')$ of $\Sigma \cap \partial\lune$,
so that $\beta \geq 5$ and $j \geq 2$,
in which case \eqref{eqn:GB} implies $g \geq 4n$, again a contradiction.
We conclude that $\beta = 1$.

Now consider the connected components $\alpha_1, \ldots, \alpha_k$ of $\alpha \cap F_+$, which are labelled such that $\alpha_1\ni p_+$.
As shown above, $\alpha_1$ intersects $\Sp^1_\perp$
in a pair of antipodal points. 
Moreover it cuts the disc $F_+$ into two connected components
which are exchanged under $\refl{\Sp}^1$. 
It follows that the intersection $\alpha \cap F_+$ has an odd number $k$
of connected components,
each of which has two endpoints on $\Sp^1_\perp$. 
In particular, $k=j/2$ and the claim about statements \ref{lem:hemispheres:one}, \ref{lem:hemispheres:three}, \ref{lem:hemispheres:five} follows directly from Lemma~\ref{lem:genus}. 
The final assertion is clear from the facts that $\beta = 1$,
$\refl{\xi}_0(\alpha)=\alpha$,
and that for any $i, \ell \in \Z$
the composite $\refl{\xi}_{i/n} \circ \refl{\xi}_{\ell/n}$
fixes $\Sp^1_\perp$ pointwise.
\end{proof}

\section{Construction of equivariant sweepouts}

This section establishes an effective $1$-parameter $\grp{G}_n$-sweepout of $\Sp^{3}$, meaning a family $\{\Sigma_t\}_{t\in[0,1]}$ of closed, embedded, $\grp{G}_n$-equivariant submanifolds of $\Sp^{3}$ such that $\Sigma_t$ is a smooth surface in $\Sp^{3}$ for every $t\in\interval{0,1}$, varying smoothly in $t\in\interval{0,1}$ and continuously, in the sense of varifolds, for $t\in[0,1]$ (see \cite[Definition 2.10]{FranzKetoverSchulz}).
Some of the ideas and the notation in the proof of the following lemma are inspired by \cite[Lemma 2.2]{Buzano2025}. 
Here and in the following sections, $\hsd^k$ denotes the $k$-dimensional Hausdorff measure on $\R^4$.  

\begin{lemma}\label{lem:sweepout}
For every $2\leq n\in\N$ there exists a $\grp{G}_n$-sweepout $\{\Sigma_t\}_{t\in[0,1]}$ of $\Sp^{3}$ with the following properties. 
\begin{enumerate}[label={\normalfont(\roman*)}]
\item\label{lem:sweepout-i} $\hsd^2(\Sigma_0)=\hsd^2(\Sigma_1)=\hsd^2(\Sp^{2})$ 
and $\hsd^2(\Sigma_t)
\leq2\pi^2+4\pi+\frac{1}{25}$ for every $t\in[0,1]$.
\item\label{lem:sweepout-ii} $\Sigma_t$ contains 
the great circle $\Sp^1$ and the meridians $\xi_0,\xi_{\frac{1}{n}},\ldots,\xi_{\frac{n-1}{n}}$ for every $t\in[0,1]$.  
\item\label{lem:sweepout-iii} $\Sigma_t$ has genus $2n$ for every $t\in\interval{0,1}$.
\end{enumerate}
\end{lemma}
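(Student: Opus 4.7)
The plan is to realize $\{\Sigma_t\}$ as smooth $\grp{G}_n$-equivariant desingularizations of the singular surface $\Sp^2 \cup T_\rho$, where $T_\rho := \{x \in \Sp^3 : x_1^2 + x_2^2 = \rho^2\}$ is a one-parameter family of $\grp{G}_n$-invariant flat tori. For $\rho \in \interval{0,1}$ we have $\hsd^2(T_\rho) = 4\pi^2\rho\sqrt{1-\rho^2} \leq 2\pi^2$, with equality at the Clifford torus $\T^2 = T_{1/\sqrt{2}}$, while $T_\rho$ degenerates to $\Sp^1 \subset \Sp^2$ as $\rho \to 1$ and to $\Sp^1_\perp$ as $\rho \to 0$, with $\hsd^2(T_\rho) \to 0$ in both limits. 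The intersection $\Sp^2 \cap T_\rho = C_+(\rho) \cup C_-(\rho)$ is a pair of parallel circles at latitude $\pm\arcsin\sqrt{1-\rho^2}$ on $\Sp^2$, and each meridian $\xi_{k/n}$ crosses $C_+(\rho)$ and $C_-(\rho)$ transversally at one point each, for a total of $2n$ crossings forming a single $\grp{G}_n$-orbit.

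For each $\rho \in \interval{0,1}$, I would construct a smooth $\grp{G}_n$-equivariant surface $\Sigma(\rho)$ by a $\Z_n$-symmetric (Scherk-tower-type) desingularization of $\Sp^2 \cup T_\rho$ along each of the two circles $C_\pm(\rho)$, arranged so that the $n$ meridians $\xi_{k/n}$ traverse the desingularized region smoothly and the connected-sum count $g = 0 + 1 + 2n - 1 = 2n$ of genus holds globally; the equator $\Sp^1$, disjoint from $C_\pm(\rho)$ for all $\rho \in \interval{0,1}$, is untouched by the construction, and the whole family would depend smoothly on $\rho$. Choose a smooth strictly monotone function $\rho \colon [0,1] \to [0,1]$ with $\rho(0) = 1$ and $\rho(1) = 0$, and set $\Sigma_t := \Sigma(\rho(t))$ for $t \in \interval{0,1}$ and $\Sigma_0 = \Sigma_1 := \Sp^2$ as varifolds. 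Properties \ref{lem:sweepout-ii} and \ref{lem:sweepout-iii} follow immediately from the construction, while for \ref{lem:sweepout-i} the localized nature of the desingularization gives
\[
\hsd^2(\Sigma_t) \leq \hsd^2(\Sp^2) + \hsd^2(T_{\rho(t)}) + E(\rho(t)) \leq 4\pi + 2\pi^2 + \tfrac{1}{25},
\]
provided the desingularization excess $E(\rho)$ is uniformly smaller than $\tfrac{1}{25}$. Varifold continuity at $t = 0, 1$ then follows because $\hsd^2(T_{\rho(t)}) \to 0$ at the endpoints and the necks shrink along with it, leaving $\Sigma_t \to \Sp^2$.

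The main obstacle is the explicit, quantitative construction of the equivariant $\Z_n$-symmetric desingularization itself, analogous to the local model underlying Lawson-type surfaces: a smooth $\grp{G}_n$-equivariant replacement of a tubular neighborhood of each intersection circle that produces genus exactly $2n$, contains the meridian arcs, and keeps the area excess $E(\rho)$ below the tight threshold $\tfrac{1}{25}$. Special care is needed near the endpoint limits $\rho \to 0, 1$, where both intersection circles and the torus itself collapse and the natural local Scherk model degenerates; the neck widths must be tuned as a function of $\rho$ to reconcile smoothness of the family on $\interval{0,1}$ with varifold continuity at the endpoints.
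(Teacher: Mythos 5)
Your proposal is a genuinely different route from the paper's.  The paper's construction (see its proof of Lemma~\ref{lem:sweepout}) does \emph{not} desingularize the full union $\Sp^2\cup C_t$; instead it builds $S_t'$ as a ``checkerboard fusion'' consisting of a \emph{quarter} of the CMC torus $C_t$ together with \emph{alternating} sectors of $\Sp^2$ --- the inner strips $\sigma_k$ for even $k$ and the outer caps $\zeta_k$ for odd $k$ --- so that $\hsd^2(S_t')=\tfrac14\hsd^2(C_t)+\tfrac14\hsd^2(\Sp^2)$ exactly, then perturbs $S_t'$ slightly into the open wedge $\{0<\theta_3<\pi/2\}$ and unfolds by $\refl{\xi}_0$ and $\refl{\Sp}^1$.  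The only construction step is thus a mild equivariant smoothing of a corner, and the area, embeddedness and genus all come essentially for free (the genus via Lemma~\ref{lem:genus}\,\ref{lem:genus-a}, since the surface meets $\Sp^1_\perp$ in exactly two points).  You instead keep all of $\Sp^2\cup T_\rho$ and run a Scherk-tower desingularization along both circles $C_\pm(\rho)$.  That also yields genus $2n$ and comparable area, but it pushes all the difficulty into the ``main obstacle'' you name: building the $\Z_n$-periodic desingularization itself, forcing it to contain the meridian arcs, and keeping its excess area uniformly below $\tfrac1{25}$ --- together this is the analytic content of a Lawson-type construction, considerably heavier machinery than what the lemma requires.

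Two further points.  First, your genus argument ``$g = 0+1+2n-1=2n$'' is a heuristic connected-sum count; it is correct, but once you observe that your desingularized surface still meets $\Sp^1_\perp$ only at $(0,0,\pm1,0)$ (since the desingularization region is localized near $C_\pm(\rho)$, away from $\Sp^1_\perp$), you could instead invoke Lemma~\ref{lem:genus}\,\ref{lem:genus-a} directly and avoid reinventing that argument.  Second, note the paper's area accounting is slightly different in spirit from yours: because $S_t'$ uses only a quarter of $\Sp^2$ and a quarter of $C_t$, and the perturbation is localized and quantitatively tiny (property~\ref{property:area}: $\hsd^2(\Sigma_t')<\tfrac{\pi^2}2+\pi+\tfrac1{100}$), the bound $2\pi^2+4\pi+\tfrac1{25}$ follows immediately by multiplying by four, with no need to estimate a competing ``removed minus added'' balance along the necks.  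So your plan can be carried out, but it is not the path of least resistance; the paper's fusion trick avoids the Scherk construction entirely.
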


\begin{proof}
Given $t\in\interval{0,1}$ the submanifold 
\begin{align}
C_t=\{x\in\Sp^3\st x_1^{2}+x_2^{2}=t^{2}\}=\{\cos(\theta_2)\cos(\theta_3)=t\}
\end{align} 
is the constant mean curvature torus in $\Sp^{3}$ at (spherical) distance $\arccos(t)$ from $\Sp^1$ and distance $\arcsin(t)$ from $\Sp^1_\perp$. 
In $\R^{4}$, the surface $C_t$ is the product of two (Euclidean) circles with radii 
$\sqrt{1-t^2}$ and $t$, and so has area $\hsd^2(C_t)=4\pi^2t\sqrt{1-t^2}$. 
For every $k\in\{0,\ldots,2n-1\}$ and every $0<t<1$ we consider the following subsets of $\Sp^{3}$, given in spherical coordinates (see Figure \ref{fig:subsets}).
\begin{align}
\label{eqn:subsets}
\begin{aligned}
\sigma_k&=\{k\pi/n\leq\theta_1\leq (k+1)\pi/n,~   0\leq\theta_2\leq\arccos(t),~\theta_3=0 \}\subset\Sp^{2}, \\
\zeta_k&=\{k\pi/n\leq\theta_1\leq (k+1)\pi/n,~    \arccos(t)\leq\theta_2\leq\pi/2,~\theta_3=0 \}\subset\Sp^{2}, \\
C_t'&=C_t\cap\{\theta_2\geq0,~\theta_3\geq0\}, \\
S_t'&=C_t'\cup  \bigl(\bigcup_{k\text{ even }}\sigma_k\bigr) \cup  \bigl(\bigcup_{k\text{ odd }}\zeta_k\bigr).
\end{aligned}
\end{align}
By construction,  
\begin{align}\label{eqn:upperarea}
\hsd^{2}(S_t')=\frac{1}{4}\hsd^{2}(C_t)+\frac{1}{4}\hsd^{2}(\Sp^{2})
=\pi^2t\sqrt{1-t^2}+\pi
\leq\frac{\pi^{2}}{2}+\pi
%<3\pi=\frac{3}{4}\hsd^{2}(\Sp^{2}) 
\end{align} 
for all $t\in\interval{0,1}$, 
and there exists an equivariant perturbation $\Sigma_t'$ of $S_t'$, depending smoothly on $0<t<1$ such that 
\begin{enumerate}[label={(\alph*)}]
\item\label{property:interior} The interior of $\Sigma_t'$ is smooth, embedded and contained in $\{0<\theta_3<\pi/2\}$.
\item\label{property:segments} $\Sigma_t'$ and $S_t'$ coincide along their (piecewise smooth) boundary. 
In particular $\Sigma_t'$ contains the meridian segment $\xi_{\frac{k}{n}}\cap\{0\leq\theta_2\leq\pi/2\}$ for every $k\in\Z$.
\item\label{property:equator} $\Sigma_t'$ and $S_t'$ meet tangentially along $\Sp^{1}$ and have the same limit as $t\to0$ respectively $t\to1$. 
\item\label{property:area} 
The slight relaxation $\hsd^{2}(\Sigma_t')<\frac{\pi^{2}}{2}+\pi+\frac{1}{100}$ of the upper area bound stated in \eqref{eqn:upperarea} holds.   
\end{enumerate}
Finally, we set 
\begin{align}\label{eqn:20250706}
\Sigma_t''&=\Sigma_t'\cup \refl{\xi}_0(\Sigma_t'), &
\Sigma_t&=\Sigma_t''\cup\refl{\Sp}^1(\Sigma_t'').
\end{align}
Properties \ref{property:interior}--\ref{property:equator} ensure that $\Sigma_t''\subset\Sp^{3}$ is indeed a smooth, embedded surface (with boundary) which can be 
``rotated'' around $\Sp^{1}$ to obtain the smooth, closed surface $\Sigma_t$.

\begin{figure}%
\pgfmathsetmacro{\tpar}{0.5}
\begin{tikzpicture}[line cap=round,line join=round,scale=2.25,semithick]
\fill[black!20](0,0)rectangle(2*pi,pi/2); 
\pgfmathsetmacro{\klast}{\npar-1}
\foreach\k in {0,1,...,\klast}{
\fill[black!40]
(2*\k*pi/\npar,0)rectangle++(pi/\npar,{rad(acos(\tpar))})
({(2*\k+1)*pi/\npar},pi/2)rectangle++(pi/\npar,{-pi/2+rad(acos(\tpar))})
;}
\foreach\k in {0,1,...,\klast}{
\draw[axis](\k*pi/\npar,0)--++(0,pi/2)node[above left, pos=0.05]{$\xi_\k$}; 
\draw[axis](\k*pi/\npar+pi,0)--++(0,pi/2)node[above left, pos=0.05]{$\xi_\k$}; 
}
\draw[thick,->](0,0)--(2*pi+\offset/2,0)node[above=1ex,inner sep=0]{$\theta_1$};
\draw[thick,->](0,0)--(0,pi/2+\offset)node[below right,inner sep=0]{~$\theta_2$}; 
\pgfmathsetmacro{\klast}{2*\npar-1}
\foreach\k in {0,1,...,\klast}{
\path(\k*pi/\npar,0)--++(pi/\npar,{rad(acos(\tpar))})node[midway]{$\sigma_\k$}; 
\path(\k*pi/\npar,pi/2)--++(pi/\npar,{-pi/2+rad(acos(\tpar))})node[midway]{$\zeta_\k$}; 
}
\draw[dashed,FarbeB](0,{rad(acos(\tpar))})--++(2*pi,0);
\draw[axis](2*pi,0)--++(0,pi/2)node[above left, pos=0.05]{$\xi_0$};
\draw plot[hdash](0,pi/2)node[left]{$\frac{\pi}{2}$};
\draw plot[hdash](0,{rad(acos(\tpar))})node[left]{$\arccos(t)$};
\draw plot[plus](0,0)node[below]{$0$};
\draw plot[vdash](pi,0)node[below]{$\pi$};
\draw plot[vdash](2*pi,0)node[below]{$2\pi$};
\end{tikzpicture}
\caption{The subsets $\sigma_k,\zeta_k\subset\Sp^{2}\subset\Sp^{3}$ defined in \eqref{eqn:subsets} using spherical coordinates $\theta_1,\theta_2,\theta_3$.}%
\label{fig:subsets}%
\end{figure}
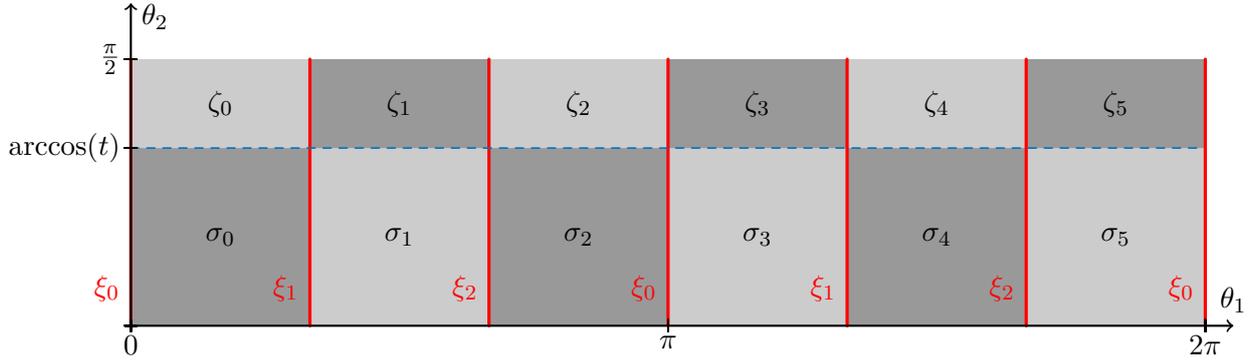

By construction, $\Sigma_t$ depends smoothly on $t\in\interval{0,1}$.  
Setting $\Sigma_0\vcentcolon=\Sp^{2}=\vcentcolon\Sigma_1$ we obtain continuity in the sense of varifolds for all $t\in[0,1]$. 
By construction, $\Sigma_t$ is $\grp{G}_n$-equivariant containing $\Sp^1$ and the $n$ meridians as stated in \ref{lem:sweepout-ii}. 
Statement \ref{lem:sweepout-i} about the area of $\Sigma_t$ follows from properties \ref{property:equator} and \ref{property:area}. 

For any $0<t<1$ the surface $\Sigma_t$ intersects the great circle $\Sp_\perp^1$ by construction only at the two points $(0,0,\pm1,0)\in\xi_0\cap\xi_{\frac{1}{n}}$. 
Since $\Sigma_t$ is clearly not a sphere, Lemma~\ref{lem:genus}\,\ref{lem:genus-a} implies that $\Sigma_t$ has genus $2n$ as claimed in \ref{lem:sweepout-iii}.  
\end{proof}

\begin{remark} 
Consider the group $\grp{H}_n\vcentcolon=\sk{\grp{A}_n, ~ \refl{Z}}$ in place of $\grp{G}_n=\sk{\grp{A}_n, ~ \refl{\Sp}^1}$. 
The same construction as for Lemma~\ref{lem:sweepout} yields an 
$\grp{H}_n$-sweepout of $\Sp^3$ if we replace the second assertion in \eqref{eqn:20250706} by 
$\Sigma_t=\Sigma_t''\cup\refl{Z}(\Sigma_t'')$. 
An $\grp{H}_n$-equivariant min-max procedure using this $\grp{H}_n$-sweepout yields the existence of an $\grp{H}_n$-equivariant minimal surface $L_n\subset\Sp^3$ with the same properties as stated in Theorem~\ref{thm:main}\,\ref{thm:main-genus}--\ref{thm:main-area}. 
However with the group $\grp{H}_n$ in place of $\grp{G}_n$ it is much harder to prove that the resulting minimal surface is actually distinct from any of the Lawson surfaces. 
Indeed, given $n\in\N$ the Lawson surface $\xi_{2,n-1}$ is $\grp{H}_n$-equivariant with genus $2n-2$ and thus compatible with a possible limit in the $\grp{H}_n$-equivariant min-max procedure. 
Numerical simulations suggest that $L_2$ is indeed congruent to $\xi_{2,1}$ and that $L_3$ is congruent to the Karcher--Pinkall--Sterling \cite{KarcherPinkallSterling} surface of genus $6$ (unlike the surfaces constructed in Theorem~\ref{thm:main}).  
In general we would expect $L_n$ to have slightly less area than $\Gamma_n$ as a consequence of the additional even symmetry $\refl{Z}$ in place of the odd symmetry $\refl\Sp^1$.   
\end{remark}

\section{Width estimate}

Given any fixed integer $2\leq n\in\N$ let $\{\Sigma_t\}_{t\in[0,1]}$ be the $\grp{G}_n$-sweepout constructed in Lemma \ref{lem:sweepout}. 
Its $\grp{G}_n$-saturation $\Pi$ is understood as the set of all $\{\Phi(t,\Sigma_t)\}_{t\in[0,1]}$, where $\Phi\colon[0,1]\times\Sp^{3}\to\Sp^{3}$ is a smooth map such that $\Phi(t,\cdot)$ is a diffeomorphism of $\Sp^{3}$ which commutes with the $\grp{G}_n$-action for all $t\in[0,1]$ and coincides with the identity for $t\in\{0,1\}$.  
The number  
\[
W_\Pi\vcentcolon=\adjustlimits\inf_{\{\Lambda_t\}\in \Pi~}\sup_{t\in[0,1]}\hsd^2({\Lambda_t})
\]
is called \emph{min-max width} of $\Pi$ (cf.~\cite[Definition 2.12]{FranzKetoverSchulz}). 
The min-max approach (cf.~\cite[Theorem~1.4]{FranzSchulz} and references therein) requires a width estimate of the form $W_\Pi>\max\{\hsd^2(\Sigma_0),\hsd^2(\Sigma_1)\}$. 
Such an estimate is typically proven by applying the isoperimetric inequality. 
In the case of the sphere $\Sp^3$, the isoperimetric inequality implies that any Borel set $F\subset\Sp^3$ of measure $\hsd^3(F)=\hsd^3(\Sp^3)/2$ has perimeter 
$P(F)\geq4\pi$ with equality if and only if $E$ is a hemisphere.  
Quantitative version of this statement are available in the literature (see e.\,g.~\cite[(5.38)]{Fusco2015} and \cite{Fusco2017}). 

For our purposes we also need to take the prescribed symmetry group $\grp{G}_n$ into account. 
In the context of sets with finite perimeter, we relax Definition \ref{defn:equivariant} by saying that a subset $F\subset\Sp^3$ is \emph{$\grp{G}_n$-equivariant}, if for all $\varphi\in\grp{G}_n$, the set $\varphi(F)$ coincides either with $F$ or with $\Sp^3\setminus F$ up to a negligible set (cf.~\cite[Definition 3.3]{Carlotto2022}).
The proof of the following lemma is analogous to the proof of \cite[Lemma~4.7]{Carlotto2022} (see also \cite[Lemma~2.3]{Buzano2025}). 

\begin{lemma}[Stability of the isoperimetric inequality in $\Sp^{3}$]
\label{lem:isoperimetric}
For every $\varepsilon>0$ there exists $\delta>0$ such that given a $\grp{G}_n$-equivariant Borel set $F\subset\Sp^3$ with Lebesgue measure $\hsd^3(F)=\hsd^3(\Sp^3)/2$ and perimeter $P(F)\leq4\pi+\delta$, there exists a $\grp{G}_n$-equivariant hemisphere $\tilde F$ with $\hsd^3(F\symdiff\tilde{F})\leq\varepsilon$.
\end{lemma}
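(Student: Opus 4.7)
The plan is a standard argument by contradiction, extracting a limiting set of finite perimeter and invoking the rigidity of hemispheres as isoperimetric minimizers in $\Sp^3$. Assume the conclusion fails: then there exist $\varepsilon_0>0$ and a sequence of $\grp{G}_n$-equivariant Borel sets $F_k\subset\Sp^3$ with $\hsd^3(F_k)=\hsd^3(\Sp^3)/2$ and $P(F_k)\leq 4\pi+\tfrac{1}{k}$, for which $\hsd^3(F_k\symdiff\tilde F)>\varepsilon_0$ holds for every $\grp{G}_n$-equivariant hemisphere $\tilde F$. The uniform perimeter bound yields a uniform BV bound on the characteristic functions $\mathbf{1}_{F_k}$, so the compactness theorem for sets of finite perimeter on the closed manifold $\Sp^3$ produces a subsequence with $\mathbf{1}_{F_k}\to\mathbf{1}_{F_\infty}$ in $L^1(\Sp^3)$. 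The volume constraint passes to the limit, and lower semicontinuity of the perimeter gives $P(F_\infty)\leq 4\pi$.

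Next I identify $F_\infty$. The sharp (rigidity form of the) spherical isoperimetric inequality asserts that, among Borel subsets of $\Sp^3$ of measure $\hsd^3(\Sp^3)/2$, the perimeter value $4\pi$ is attained only by hemispheres (modulo null sets). Hence $F_\infty$ coincides, up to a null set, with some hemisphere $\{x\in\Sp^3\st x\cdot v>0\}$. It remains to verify that this hemisphere can be chosen $\grp{G}_n$-equivariant. Because $\grp{G}_n$ is finite, for each $\varphi\in\grp{G}_n$ one of the two alternatives in the relaxed equivariance of $F_k$ (namely $\varphi(F_k)=F_k$ or $\varphi(F_k)=\Sp^3\setminus F_k$, both modulo null sets) occurs for infinitely many $k$; a pigeonhole argument over the finite set $\grp{G}_n$ then produces a further subsequence along which, for every $\varphi\in\grp{G}_n$, the same alternative holds for all $k$. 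Since $\mathbf{1}_{F_k}\to\mathbf{1}_{F_\infty}$ in $L^1$ and each $\varphi$ is an isometry, the same alternative transfers to $F_\infty$, so $F_\infty$ is itself $\grp{G}_n$-equivariant, which forces its defining vector $v$ to satisfy $\varphi(v)=\pm v$ for every $\varphi\in\grp{G}_n$. This contradicts the assumption, because $\hsd^3(F_k\symdiff F_\infty)\to 0$ while each such term was supposed to exceed $\varepsilon_0$.

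The main technical obstacle is ensuring that relaxed equivariance survives the $L^1$-limit, since the notion is defined only up to negligible sets and is therefore not closed under generic weak limit procedures; the finiteness of $\grp{G}_n$ and the pigeonhole step above are what make this pass-through clean and, in fact, essential. A secondary ingredient is the BV rigidity statement used to identify $F_\infty$ as a hemisphere, which is classical via Schmidt symmetrization; one could alternatively bypass the compactness step entirely by invoking the quantitative isoperimetric inequalities of \cite{Fusco2015,Fusco2017} to obtain an effective (not merely existential) modulus $\delta(\varepsilon)$, at the cost of then having to match the resulting nearest hemisphere against the \emph{equivariant} ones, which brings us back to an argument of the pigeonhole type above.
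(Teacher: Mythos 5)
Your argument is correct, and it is the same compactness/contradiction argument that the paper itself invokes by reference: the paper does not write out the proof but points to \cite[Lemma~4.7]{Carlotto2022} and \cite[Lemma~2.3]{Buzano2025}, both of which proceed by BV compactness, lower semicontinuity of perimeter, and the rigidity of the half-volume isoperimetric profile, then handle the symmetry constraint as you do. You correctly identified and resolved the only delicate point, namely that the relaxed notion of $\grp{G}_n$-equivariance (defined only modulo null sets) is not manifestly closed under $L^1$-limits; the pigeonhole over the finite group to lock in one alternative per element, and the observation that two hemispheres agreeing up to a null set must coincide exactly, together upgrade the limiting set $F_\infty$ to a genuinely $\grp{G}_n$-equivariant hemisphere, yielding the contradiction. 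Your closing remark on the alternative route via \cite{Fusco2015,Fusco2017} matches the paper's own citation of those quantitative results and correctly notes that one still has to snap the optimal hemisphere to an equivariant one.
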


\begin{lemma}[width estimate]\label{lem:width}
$\hsd^{2}(\Sp^2)<W_\Pi\leq2\pi^2+4\pi+\frac{1}{25}<3\hsd^{2}(\Sp^2)$. 
\end{lemma}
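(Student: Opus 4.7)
The plan is to treat the two inequalities separately, with all the work in the lower bound. The upper bound is immediate: the sweepout $\{\Sigma_t\}$ of Lemma~\ref{lem:sweepout} itself belongs to $\Pi$ (taking $\Phi\equiv\mathrm{id}$), so $W_\Pi\leq\sup_t\hsd^2(\Sigma_t)\leq 2\pi^2+4\pi+\tfrac{1}{25}$ by Lemma~\ref{lem:sweepout}\,\ref{lem:sweepout-i}, and the final inequality $2\pi^2+4\pi+\tfrac{1}{25}<12\pi=3\hsd^2(\Sp^2)$ is an elementary numerical check. The remaining strict lower bound $\hsd^2(\Sp^2)=4\pi<W_\Pi$ I would prove by contradiction, combining the topological nontriviality of the sweepout with the quantitative isoperimetric estimate of Lemma~\ref{lem:isoperimetric}.

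So suppose a minimizing sequence $\{\Lambda^j_t\}=\{\Phi^j(t,\Sigma_t)\}\in\Pi$ satisfies $\sup_t\hsd^2(\Lambda^j_t)\to 4\pi$. For each $t\in\interval{0,1}$ the surface $\Lambda^j_t$ is smooth, connected and $\grp{G}_n$-equivariant of genus $2n$, so it separates $\Sp^3$ into two components, of which I would pick one by an $L^1$-continuous selection $F^j_t$, extended to $t\in\{0,1\}$ by limits. Because $\refl{\Sp}^1\in\grp{G}_n$ exchanges the two hemispheres bounded by $\Sp^2$, a local-constancy argument forces $\refl{\Sp}^1(F^j_t)=\Sp^3\setminus F^j_t$ for every $t$, whence $\hsd^3(F^j_t)=\pi^2$. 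A short check with the generators of $\grp{G}_n$ acting on the normal vector in $\R^4$ shows that $\Sp^2$ and $\rs$ are the only $\grp{G}_n$-invariant great two-spheres in $\Sp^3$, so the $\grp{G}_n$-equivariant half-volume hemispheres are exactly $H^\pm_4\vcentcolon=\{\pm x_4>0\}$ and $H^\pm_3\vcentcolon=\{\pm x_3>0\}$, pairwise at $L^1$-distance at least $\pi^2$.

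The crucial topological input is $F^j_0\neq F^j_1$, reflecting the nontriviality of the sweepout in $\pi_1$ of the space of $\grp{G}_n$-equivariant Caccioppoli sets. To see this for the original sweepout, I would track a generic $x_0\in\Sp^3\setminus(\Sp^2\cup\Sp^1_\perp)$: the construction fuses $\Sp^2$ with the torus family $C_t=\{x_1^2+x_2^2=t^2\}$ so that, modulo the stationary $\Sp^2$-component (which cannot cross $x_0$), the moving surface sweeps past $x_0$ exactly once, near $t^2=x_{0,1}^2+x_{0,2}^2$; $L^1$-continuity then forces the continuous choice starting at $F_0=H^+_4$ (the hemisphere containing $x_0$) to end at $F_1=H^-_4$. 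Since $\Phi^j(0,\cdot)=\Phi^j(1,\cdot)=\mathrm{id}$ the same identification transfers to $F^j_t$. Fixing $\varepsilon\in\interval{0,\pi^2/2}$, if $F^j_t$ were within $L^1$-distance $\varepsilon$ of some equivariant hemisphere for every $t\in[0,1]$, then the four open sets $\{t:\hsd^3(F^j_t\symdiff H)<\varepsilon\}$ (as $H$ runs over the four hemispheres above) would be pairwise disjoint and cover the connected interval $[0,1]$, forcing the nearest-hemisphere map to be constant; this contradicts $F^j_0=H^+_4\neq H^-_4=F^j_1$. Hence there is $t_j\in[0,1]$ with $\min_H\hsd^3(F^j_{t_j}\symdiff H)\geq\varepsilon$, the minimum ranging over the four equivariant hemispheres, and the contrapositive of Lemma~\ref{lem:isoperimetric} yields a uniform $\delta>0$ with $\hsd^2(\Lambda^j_{t_j})=P(F^j_{t_j})\geq 4\pi+\delta$, contradicting $\sup_t\hsd^2(\Lambda^j_t)\to 4\pi$. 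The main obstacle in carrying this out is making the topological identification $F^j_0\neq F^j_1$ rigorous: one must argue carefully about $L^1$-continuity of $F^j_t$ at the varifold endpoints $t\in\{0,1\}$, where $\Lambda^j_t$ degenerates to $\Sp^2$, and supplement the single-crossing heuristic with a precise homotopy argument to compensate for the fact that $\Sigma_t$ contains only a smooth approximation of $C_t$ rather than $C_t$ itself.
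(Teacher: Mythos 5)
Your proof takes essentially the same route as the paper: combine the stability Lemma~\ref{lem:isoperimetric} with the classification (via Lemma~\ref{lem:greatspheres}) of the four $\grp{G}_n$-equivariant half-volume hemispheres, and derive a contradiction from the $L^1$-continuity of the family of separating sets together with the fact that it joins complementary hemispheres at $t=0$ and $t=1$. The one point where you worry about a gap — rigorously establishing $F^j_0\neq F^j_1$ — is handled more directly in the paper, with no crossing heuristic: for $t\in[0,1)$ one defines $E_t$ to be the component of $\Sp^3\setminus\Sigma_t$ containing $e_4=(0,0,0,1)$, so $E_0=\{\theta_3>0\}$, and one sets $E_1\vcentcolon=\{\theta_3<0\}$; that this makes $\{E_t\}$ $L^1$-continuous at $t=1$ is already built into the sweepout (properties (a) and (c) in the proof of Lemma~\ref{lem:sweepout} confine $\Sigma_t'$ to $\{\theta_3\geq0\}$ and pin down its limit as $t\to1$), and since $\Phi^j(0,\cdot)=\Phi^j(1,\cdot)=\mathrm{id}$ this transfers immediately to $F^j_t=\Phi^j(t,E_t)$. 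Your four-open-sets covering argument for the final contradiction is a clean restatement of the paper's assertion that $\tilde F_t$ must be constant in $t$.
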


\begin{proof}
By construction, $\Sigma_t\subset\Sp^{3}$ divides $\Sp^{3}$ into two domains of equal volume for any $t\in[0,1]$.  
Given $0\leq t<1$, let $E_t\subset\Sp^{3}$ be the connected component of $\Sp^{3}\setminus\Sigma_t$ containing the point $e_4=(0,0,0,1)$. 
In particular, $E_0=\{\theta_3>0\}$ in spherical coordinates. 
Setting $E_1=\{\theta_3<0\}$ we obtain a $1$-parameter family 
$\{E_t\}_{t\in[0,1]}$ of subsets with boundary $\partial E_t=\Sigma_t$, being continuous in the sense that for any $t_0\in[0,1]$
\begin{align}\label{eqn:continuity}
\lim_{t\to t_0}\hsd^{3}(E_t\symdiff E_{t_0})&=0.
\end{align}
Let $\{\Lambda_t\}_{t\in[0,1]}\in\Pi$ be arbitrary. 
Then there exists a smooth map $\Phi\colon[0,1]\times\Sp^{3}\to\Sp^{3}$, where $\Phi(t,\cdot)$ is a $\grp{G}_n$-equivariant diffeomorphism for all $t\in[0,1]$ which coincides with the identity for $t\in\{0,1\}$, such that $\Lambda_t=\Phi(t,\Sigma_t)$. 
Let $F_t\vcentcolon=\Phi(t,E_t)$. 
Then the $1$-parameter family $\{F_t\}_{t\in[0,1]}$ is continuous in the sense of \eqref{eqn:continuity} with $F_0=\{\theta_3>0\}$ and $F_1=\{\theta_3<0\}$. 
Given $\varepsilon>0$ let $\delta>0$ be as in Lemma~\ref{lem:isoperimetric} and suppose that 
\[\sup_{t\in[0,1]}\hsd^{2}(\Lambda_t)\leq4\pi+\delta.\]

Then, for every $t\in[0,1]$, there exists a $\grp{G}_n$-equivariant hemisphere $\tilde F_t$ such that $\hsd^{3}(F_t\symdiff\tilde F_t)\leq\varepsilon$. 
By the triangle inequality for symmetric differences we also have 
\begin{align}\label{eqn:20250422}   
\hsd^{3}(\tilde F_s\symdiff\tilde F_t)\leq\hsd^{3}(F_s\symdiff F_t)+2\varepsilon
\end{align}
for any $s,t\in[0,1]$.
By Lemma~\ref{lem:greatspheres} stated below there are only finitely many candidates for $\tilde F_t$. 
In particular, since the family $\{F_t\}_{t\in[0,1]}$ is continuous, \eqref{eqn:20250422} implies that $\tilde F_t$ is constant in $t$ (provided that $\varepsilon$ has been chosen sufficiently small), but this contradicts the fact that $F_0$ is the complement of $F_1$ in $\Sp^{3}$. 

The upper bound is a direct consequence of Lemma~\ref{lem:sweepout}\,\ref{lem:sweepout-i} and the definition of min-max width.
\end{proof}

\begin{lemma}[$\grp{A}_n$-equivariant great spheres]
\label{lem:greatspheres}
For any $2\leq n\in\N$, the only $\grp{A}_n$-equivariant great spheres in $\Sp^3$ are $\Sp^2$ and $\rs$.
\end{lemma}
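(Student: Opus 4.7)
My plan is to parametrize each great sphere in $\Sp^3$ by its unit normal in $\R^4$ and then to impose $\grp{A}_n$-invariance. Concretely every great sphere is $S_v := \{x \in \Sp^3 : \sk{x,v} = 0\}$ for some $v \in \Sp^3$ (unique up to sign), and $S_v$ is $\grp{A}_n$-equivariant precisely when $g(v) \in \{\pm v\}$ for every $g \in \grp{A}_n$; it therefore suffices to test the two generators $\refl{\xi}_0$ and $\refl{\Xi}_{\frac{1}{2n}}$. Writing $(e_1,e_2,e_3,e_4)$ for the standard basis of $\R^4$, I decompose $v = v_1 e_1 + v_2 e_2 + v_3 e_3 + v_4 e_4$.

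The first step is to exploit the rotation $\rho := \refl{\xi}_{\frac{1}{n}} \circ \refl{\xi}_0 \in \Z_n \subset \grp{A}_n$, which in the orthogonal splitting $\R^4 = \Span\{e_1,e_2\} \oplus \Span\{e_3,e_4\}$ acts as the planar rotation by $2\pi/n$ on the first summand and as the identity on the second. The condition $\rho(v) = \pm v$ then becomes an eigenvalue question: for $n \geq 3$ the rotation has no real eigenvalue $-1$, so necessarily $\rho(v) = v$ and $v \in \Span\{e_3,e_4\}$; for $n = 2$ the rotation additionally acts as $-I$ on $\Span\{e_1,e_2\}$, so $v$ may alternatively lie in $\Span\{e_1,e_2\}$.

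To finish the generic case $v \in \Span\{e_3,e_4\}$, I would apply $\refl{\xi}_0$, which per \eqref{eqn:20250522} sends $(x_1,x_2,x_3,x_4)$ to $(x_1,-x_2,x_3,-x_4)$. Its restriction to $\Span\{e_3,e_4\}$ has one-dimensional eigenspaces $\R e_3$ (eigenvalue $+1$) and $\R e_4$ (eigenvalue $-1$), so $\refl{\xi}_0(v) = \pm v$ forces $v \in \R e_3 \cup \R e_4$. This yields exactly the two great spheres $\rs = \{x_3 = 0\}$ and $\Sp^2 = \{x_4 = 0\}$, both $\grp{A}_n$-invariant because $e_3$ and $e_4$ are fixed by $\refl{\xi}_0$ and lie in $\Xi_{\frac{1}{2n}}$ (hence are also fixed by $\refl{\Xi}_{\frac{1}{2n}}$).

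Finally I would rule out the accidental $n = 2$ candidates $v \in \Span\{e_1,e_2\}$. The same argument with $\refl{\xi}_0$ reduces them to $v \in \R e_1 \cup \R e_2$, and a direct computation of the reflection through $\Xi_{\frac{1}{4}}$ (whose unit normal is $\tfrac{1}{\sqrt{2}}(1,-1,0,0)$) gives $\refl{\Xi}_{\frac{1}{4}}(e_1) = e_2$ and $\refl{\Xi}_{\frac{1}{4}}(e_2) = e_1$, neither equal to $\pm e_1$ or $\pm e_2$; both candidates are thereby eliminated. I do not expect any serious obstacle: the argument reduces entirely to short linear algebra, and the only delicate point is the case $n = 2$, where the generator of $\Z_n$ degenerates to $-I$ on $\Span\{e_1,e_2\}$ and creates the spurious family that is then killed by $\refl{\Xi}_{\frac{1}{4}}$.
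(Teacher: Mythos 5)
Your proof is correct. It is also essentially the same parametrization as the paper's, just phrased in linear algebra: the paper's ``centers'' of a great sphere $S$ (the pair of antipodal points at distance $\pi/2$ from $S$) are exactly your $\pm v$, so both arguments track how $\grp{A}_n$ moves the pole of the sphere. The difference is in which group elements are tested first. The paper imposes invariance under all $n$ reflections $\refl{\Xi}_{(2i+1)/(2n)}$ at once, which forces the centers into the common intersection $\Sp^1_\perp$ for every $n\geq 2$ in a single step (after ruling out the possibility $S=\Xi_{(2i+1)/(2n)}$ via $\refl{\xi}_{i/n}$), and then intersects with the $\refl{\xi}_0$-constraint. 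You instead lead with the rotation $\rho\in\Z_n$, which lands $v$ in $\Span\{e_3,e_4\}$ for $n\geq 3$ but produces the spurious family $v\in\Span\{e_1,e_2\}$ when $n=2$ (since $\rho|_{\Span\{e_1,e_2\}}=-I$ there), requiring a separate computation with $\refl{\Xi}_{1/4}$ to dispose of it. The paper's route is slightly cleaner in that it uniformly treats all $n$; yours is a touch more computational and makes the exceptional role of $n=2$ explicit, but both are complete and correct. One small imprecision: you say $e_4$ is ``fixed'' by $\refl{\xi}_0$, whereas in fact $\refl{\xi}_0(e_4)=-e_4$; for the invariance of $S_{e_4}=\Sp^2$ this is of course harmless since only the line $\R e_4$ needs to be preserved, but the wording should say ``preserved up to sign.''
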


\begin{proof}
By the centers of a given great sphere $S\subset\Sp^3$
we mean the unique pair of antipodal points 
at distance $\pi/2$ from $S$ in $\Sp^3$. 
Let $S\subset\Sp^3$ be any $\grp{A}_n$-equivariant great sphere
with centers $\pm p\in\Sp^3$. 
For $S$ to be equivariant with respect to the reflections
through all great spheres $\{\Xi_{(2k+1)/(2n)}\}_{k\in\Z}$,
its centers $\pm p$ must lie in their common intersection,
which is the great circle $\Sp^1_\perp$. Indeed, the only other possibility, that $S=\Xi_{(2i+1)/(2n)}$ for some $i \in \Z$, is ruled out by the condition $\refl{\xi}_{i/n}(S)=S$.

The fact that $\refl{\xi}_{0}\in\grp{A}_n$ also implies that the centers $\pm p$ of $S$ lie either on the great circle $\xi_0$ or on  $\xi_0^\perp\vcentcolon=\{x \in \Sp^3 \st x_1=x_3=0\}$. 
We conclude that either $\pm p\in\Sp^1_\perp\cap\xi_0=\{(0,0,\pm1,0)\}$, in which case $S=\rs$, or $\pm p\in\Sp^1_\perp\cap\xi_0^\perp=\{(0,0,0,\pm1)\}$, in which case $S=\Sp^2$.
\end{proof}

\section{Topological and geometric control}
\label{sec:topology}

This section contains the proof of the main existence result Theorem~\ref{thm:main}, which is split into three parts:
Section \ref{subsec:genus} focuses on controlling the topology, 
Section \ref{subsec:symmetry} determines the full symmetry group and 
in Section \ref{subsec:index} we estimate the Morse index from below. 
Finally, in Section \ref{subsec:distinctness}, we distinguish the constructed surfaces from any of the previously known minimal surfaces in $\Sp^3$. 

\subsection{Genus and area}
\label{subsec:genus}

\begin{proof}[Proof of Theorem~\ref{thm:main}\,\ref{thm:main-genus}--\ref{thm:main-axes}]
Given $2\leq n\in\N$ let $\{\Sigma_t\}_{t\in[0,1]}$ be the $\grp{G}_n$-sweepout of $\Sp^3$ constructed in Lemma \ref{lem:sweepout}. 
The width estimate stated in Lemma \ref{lem:width} implies that the equivariant min-max theorem   
\cite[Theorem~2.14, see Remark~1.2]{FranzKetoverSchulz} applies. 
(Indeed, using the notation of \cite[Table~2]{FranzKetoverSchulz}, the singular locus of the $\grp{G}_n$-action consists only of great circles and great spheres of types $\mathord*11$, $\mathord*22$ and $\mathord*nn$.)
Hence, there exists a min-max sequence converging in the sense of varifolds to $m\Gamma_n$, 
where $\Gamma_n$ is a connected, embedded, $\grp{G}_n$-equivariant minimal surface in $\Sp^3$ and where the multiplicity $m$ is a positive integer.  
Here we rely on the fact that a closed minimal surface in $\Sp^3$ is necessarily connected because $\Sp^3$ being complete and connected with positive Ricci curvature has the Frankel \cite{Frankel1966} property.  
Moreover, $\Gamma_n$ is orientable being a closed, embedded surface in $\Sp^3$.     
Recalling Lemma~\ref{lem:width}, we have 
\begin{align}\label{eqn:20250612}
m\hsd^2(\Gamma_n)
&=W_\Pi\in\interval{4\pi,2\pi^2+4\pi+\tfrac{1}{25}}\subset\interval{4\pi,12\pi}.
\end{align}
The upper area bound in claim \ref{thm:main-area} follows from \eqref{eqn:20250612} and we also obtain $m\in\{1,2\}$ since necessarily $\hsd^2(\Gamma_n)\geq4\pi$ (see e.\,g.~\cite{Li1982}). 
All the surfaces in the min-max sequence satisfy property \ref{thm:main-axes} stated in Theorem~\ref{thm:main}. 
Appealing to the $\grp{G}_n$-equivariance, we conclude that $\Gamma_n$ also satisfies property \ref{thm:main-axes} and that the multiplicity $m$ must be odd (cf.~\cite[Theorem~3.2\,(iii)]{Ketover2016FBMS}). 
Consequently, $m=1$.

By Lemma~\ref{lem:sweepout}\,\ref{lem:sweepout-iii}, all the surfaces in the min-max sequence have genus $2n$. 
The topological lower semicontinuity result stated in \cite[Theorem~2.14]{FranzKetoverSchulz} 
(cf.~\cite{Ketover2019} and \cite[Theorem~1.8]{FranzSchulz}) 
implies that the genus of $\Gamma_n$ is at most $2n$. 
Since $\hsd^2(\Gamma_n)>\hsd^2(\Sp^2)$ by \eqref{eqn:20250612} (using that $m=1$), we conclude $\genus(\Gamma_n)\geq1$ from Almgren's \cite[Lemma~1]{Almgren1966} uniqueness result. 
Lemma~\ref{lem:genus} then implies that the genus of $\Gamma_n$ is either $2n$ or $2n-2$ which proves claim \ref{thm:main-genus}.
In particular $\Gamma_n$ has genus at least $2$, so, by \cite[Theorem~B]{MarquesNeves2014}, area greater than $2\pi^2$. 
\end{proof}

\subsection{Full symmetry group}
\label{subsec:symmetry}

By construction the minimal surface $\Gamma_n\subset\Sp^3$ is $\grp{G}_n$-equivariant for any $2\leq n\in\N$. 
In the following we prove that there are no extra symmetries if $n\geq4$. 

\begin{definition}
The \emph{symmetry group} of an embedded surface $\Sigma \subset \Sp^3$ is
\[
\symgrp{\Sigma}\vcentcolon=\{\Phi \in \Ogroup(4) \st \Phi(\Sigma) = \Sigma\}.
\]
\end{definition}

\begin{lemma}[Reflections through circles orthogonally intersecting $\Sp^1$]
\label{lem:refls_through_ortho_circles}
Let $2 \leq n \in \N$, and let $\xi\subset\Sp^3$ be any great circle intersecting $\Sp^1$ orthogonally.
If $\refl{\xi} \in \symgrp{\Gamma_n}$, then $\refl{\xi} \in \grp{G}_n$ 
and there exists $i\in\Z$ such that
either $\xi=\xi_{i/n}$
or $\xi$ orthogonally intersects $\xi_{i/n}$ at two points on $\Sp^1$.
In particular, if $\xi\subset\Gamma_n$, then $\xi \in \{\xi_{i/n}\}_{i \in \Z}$.
\end{lemma}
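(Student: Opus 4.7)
I would parameterize $\xi$ as follows. Writing $\xi\cap\Sp^1=\{\pm q\}$ with $q=(\cos\alpha,\sin\alpha,0,0)$, the orthogonality hypothesis forces the tangent direction to $\xi$ at $q$ to lie in $\Span(e_3,e_4)$, so I write it as $\cos\beta\cdot e_3+\sin\beta\cdot e_4$ for some $\beta\in\R/\pi\Z$. The two alternatives in the lemma then correspond respectively to $(\alpha,\beta)=(i\pi/n,0)$, in which case $\xi=\xi_{i/n}$, and $(\alpha,\beta)=(i\pi/n,\pi/2)$, in which case $\xi$ is the axial great circle through $\pm p_i,\pm e_4$ orthogonal to $\Sp^2$. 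For either configuration a direct computation places $\refl{\xi}$ inside $\grp{G}_n$: the meridian reflection $\refl{\xi}_{i/n}$ lies in $\grp{A}_n\subset\grp{G}_n$, while the axial reflection at $p_0$ equals $\refl{\Sp}^1\circ\refl{\xi}_0$, the remaining axial reflections being obtained by $\Z_n$-conjugation. Thus the task reduces to showing that the hypothesis $\refl{\xi}\in\symgrp{\Gamma_n}$ forces $(\alpha,\beta)$ into the above discrete set.

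First I would restrict $\beta$. Since $\refl{\xi}$ preserves $\Span(e_3,e_4)$---acting there as the linear reflection fixing the axis through $\cos\beta\cdot e_3+\sin\beta\cdot e_4$---and also preserves the finite set $\Gamma_n\cap\Sp^1_\perp$ classified by Lemma~\ref{lem:genus}, it must permute this configuration by the above reflection. In case~\ref{lem:genus-a} the set is $\{\pm N\}$, and the only reflections of $\Sp^1_\perp$ fixing or swapping these two points have axis $\Span(e_3)$ or $\Span(e_4)$, forcing $\beta\in\{0,\pi/2\}$. In cases~\ref{lem:genus-b}--\ref{lem:genus-c} the additional points form $\sk{\refl{\Sp}^1,\refl{\xi}_0}$-orbits of antipodal pairs at angles $\pm\theta$ around the $e_3$-axis (as read off from the Riemann--Hurwitz step in the proof of Lemma~\ref{lem:genus}); enumerating the lines in $\Span(e_3,e_4)$ they span one verifies that outside the sporadic exceptional values $\theta=\pi/3,2\pi/3$ only $\beta\in\{0,\pi/2\}$ is compatible with the permutation constraint, and these exceptional values can be excluded by a further direct check combined with the $\grp{G}_n$-equivariance.

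Next I would restrict $\alpha$. The $n\geq 2$ meridians $\xi_{i/n}\subset\Gamma_n$ all pass through $N$ with tangents $p_0,\ldots,p_{n-1}$ spanning $\Span(e_1,e_2)$, so $T_N\Gamma_n=\Span(e_1,e_2)=T_N\Sp^2$. Consequently every great circle on $\Gamma_n$ through $N$ lies in $\Sp^2$ and is a meridian of $\Sp^2$. A branch-counting argument at the tangency point $N$---using that $\Gamma_n\cap\Sp^2$ is locally the nodal set of a harmonic function whose $\Z_n$-equivariant leading term has exactly $n$ nodal directions $p_0,\ldots,p_{n-1}$---combined with a local analysis at each $p_i$ (where $\sk{\refl{\Sp}^1,\refl{\xi}_{i/n}}$-equivariance forces the graph of $\Gamma_n$ over $\Sp^2$ to have leading term $c\,x_1'x_2'$ in the $\Sp^1$- and $\xi_{i/n}$-directions), identifies $\Gamma_n\cap\Sp^2$ exactly as $\Sp^1\cup\bigcup_{i=0}^{n-1}\xi_{i/n}$. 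Hence $\refl{\xi}$ permutes these $n$ meridians, and therefore the $2n$-point set $\{p_i\}=\Sp^1\cap\bigcup\xi_{i/n}$; acting on $\Sp^1\ni\phi$ as $\phi\mapsto 2\alpha-\phi$, this forces $\alpha\in(\pi/(2n))\Z$. The half-integer subcase $\alpha=(2m+1)\pi/(2n)$ is ruled out by composing $\refl{\xi}$ with $\refl{\Xi}_{1/(2n)}\in\grp{G}_n$ to obtain a symmetry of $\Gamma_n$ rotating $\Sp^1$ by $\pi/n$; this rotation would carry $\{\xi_{i/n}\}$ to a disjoint second family of $n$ meridians through $N$ at half-integer angles, contradicting the uniqueness just established.

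Finally, for the ``in particular'' clause: the axial great circles meet $\Sp^1_\perp$ at $\pm e_4$, which is absent from $\Gamma_n\cap\Sp^1_\perp$ in every case of Lemma~\ref{lem:genus} (these points are neither $\pm N$ nor of the specific form of the four additional points in case~\ref{lem:genus-b}); hence if $\xi\subset\Gamma_n$ the axial alternative cannot occur, leaving $\xi=\xi_{i/n}$. I expect the main obstacle to be the uniqueness assertion $\Gamma_n\cap\Sp^2=\Sp^1\cup\bigcup\xi_{i/n}$ in Step~2: the $\grp{G}_n$-equivariance alone does not exclude a further orbit of meridians on $\Gamma_n$ through $N$, and a careful local branch count at the tangential intersection of the two minimal surfaces $\Gamma_n$ and $\Sp^2$ at $N$ is required, exploiting both the $\Z_n$-symmetry of the leading nodal structure and the minimal-surface equation.
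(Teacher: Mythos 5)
Your proposal takes a genuinely different route from the paper, and it has a gap that you yourself flag but understate. The paper's proof splits into three cases according to where $\xi \cap \Sp^1$ lies: on some $\xi_{i/n}$ (handled directly by a tangent-plane argument giving the two alternatives), on some $\Xi_{(2i+1)/(2n)}$ (ruled out by a composition-of-reflections argument forcing $\Gamma_n$ to be a great sphere), or elsewhere (in which case $\sk{\grp{G}_n,\refl{\xi}}$ contains a copy of $\grp{G}_{kn}$ for some $k\geq 3$, contradicting Lemma~\ref{lem:genus} via the upper bound on genus). At no point does the paper need any control over the set $\Gamma_n\cap\Sp^2$. By contrast, your Step~2 hinges on the identity $\Gamma_n\cap\Sp^2=\Sp^1\cup\bigcup_{i}\xi_{i/n}$, which is not established anywhere in the paper and which your argument does not prove. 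The obstacle is not merely the local branch count at the tangency $N$ that you mention: even a sharp local analysis there (and at the points $p_i\in\Sp^1$) would say nothing about possible components of $\Gamma_n\cap\Sp^2$ that avoid these points entirely, since $\refl{\Sp}^2\notin\grp{G}_n$ and so the intersection is not constrained by the prescribed symmetry. Without this identity, the claim that $\refl{\xi}$ permutes exactly the $n$ meridians $\{\xi_{i/n}\}$ does not follow, so the restriction $\alpha\in(\pi/(2n))\Z$ is not forced.

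Two smaller points. In Step~1 your reduction of $\beta$ to $\{0,\pi/2\}$ is clean only in case~\ref{lem:genus-a} of Lemma~\ref{lem:genus}; in cases~\ref{lem:genus-b}--\ref{lem:genus-c} you wave at ``sporadic exceptional values $\theta=\pi/3,2\pi/3$'' without an actual exclusion, whereas the paper sidesteps the structure of $\Gamma_n\cap\Sp^1_\perp$ entirely. On the positive side, your derivation of the ``in particular'' clause from the observation $\pm e_4\notin\Gamma_n\cap\Sp^1_\perp$ is correct and is a valid alternative to the paper's argument (which instead notes that $\xi\subset\Gamma_n$ with $\xi$ orthogonal to both $\Sp^1$ and $\xi_{i/n}$ at a common point would force three mutually orthogonal directions into the two-dimensional $T_{p_i}\Gamma_n$). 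You would do better to abandon the $(\alpha,\beta)$-parametrization and instead mimic the paper's strategy, using the genus bound through Lemma~\ref{lem:genus} to dispose of the ``generic $\alpha$'' case rather than trying to pin down $\Gamma_n\cap\Sp^2$.
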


\begin{proof}
We consider the various possible locations for the antipodal points $\pm p\in\Sp^3$ where $\xi$ intersects $\Sp^1$ orthogonally.
If $\xi \cap \Sp^1 = \xi_{i/n} \cap \Sp^1$ for some $i \in \Z$,
then $\xi$ either coincides with $\xi_{i/n}$ or intersects it orthogonally because $\Gamma_n$ is embedded containing both $\Sp^1$ and $\xi_{i/n}$. 
In both cases we obtain $\refl{\xi}\in\grp{G}_n$.

If $\xi\subset\Xi_{(2i+1)/(2n)}$ for some $i\in\Z$,
then 
the composite $\refl{\xi}\circ\refl\Xi_{(2i+1)/(2n)}$ 
is reflection through the great sphere $\Xi'$ orthogonal to $\Xi_{(2i+1)/(2n)}$
and containing $\xi$.    
Since $\xi$ intersects $\Sp^1$ by assumption, we also have $\Sp^1\subset\Xi'$, 
but $\refl{\Sp}^1\circ \refl{\Xi}'$ is reflection through
the great sphere $\Xi''$ orthogonally intersecting $\Xi$ along $\Sp^1$.
Thus $\Gamma_n$ must be tangential along $\Sp^1$
to either $\Xi'$ or $\Xi''$
and therefore actually equal to one of them
(as explained for example in the second paragraph
of the proof of Corollary \ref{cor:hemispheres}),
in contradiction to the fact that $\Gamma_n$ has genus at least two.

Thus we can assume that the points $\pm p$ do not lie on $\xi_{i/n}$ or $\Xi_{(2i+1)/(2n)}$ for any $i\in\Z$. 
In this case there is some $3\leq k \in \N$
for which $\Gamma_n$ is $\grp{G}_{kn}$-equivariant,
which violates Lemma \ref{lem:genus},
given that $\Gamma_n$ has genus
at most $2n$, so strictly less than $2kn-2$.

Finally, if $\xi \subset \Gamma_n$, then $\refl{\xi} \in \symgrp{\Gamma_n}$,
but since $\Gamma_n$ also contains $\Sp^1$ and $\bigcup_{i \in \Z} \xi_{i/n}$,
the embeddedness of $\Gamma_n$ concludes the proof.
\end{proof}

\begin{lemma}[Symmetries preserving $\Sp^1$]
\label{lem:syms_preserving_Sp1}
Let $2 \leq n \in \N$ and $\Phi \in \symgrp{\Gamma_n}$.
If $\Phi(\Sp^1)=\Sp^1$ then $\Phi\in\grp{G}_n$.
\end{lemma}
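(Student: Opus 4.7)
The plan is to exploit the orthogonal splitting $\R^4=\Span(e_1,e_2)\oplus\Span(e_3,e_4)$ that any $\Phi\in\Ogroup(4)$ with $\Phi(\Sp^1)=\Sp^1$ must respect, and to reduce the claim to ruling out two specific reflections. Writing $\Phi=A\oplus B$ with $A,B\in\Ogroup(2)$ acting on the $(x_1,x_2)$- and $(x_3,x_4)$-planes respectively, I first observe that by Theorem~\ref{thm:main}\,\ref{thm:main-axes} and Lemma~\ref{lem:refls_through_ortho_circles} the meridians $\xi_{i/n}$ are the only great circles in $\Gamma_n$ meeting $\Sp^1$ orthogonally. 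Since $\Phi$ preserves both $\Gamma_n$ and orthogonality, it permutes these meridians, so $A$ permutes the $2n$ vertices $p_i=(\cos(i\pi/n),\sin(i\pi/n),0,0)$ on $\Sp^1$ and hence lies in the dihedral group $D_{2n}$ of order $4n$.

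The images of $\refl{\xi}_0$ and $\refl{\Xi}_{\frac{1}{2n}}$ under $\Phi\mapsto A$ are two reflections in $\Ogroup(2)$ meeting at angle $\pi/(2n)$, which already generate $D_{2n}$. After composing $\Phi$ with a suitable element of $\grp{G}_n\subset\symgrp{\Gamma_n}$, I may therefore assume $A=I$, so that $\Phi$ fixes $\Sp^1$ pointwise. Then $\Phi$ fixes each $p_i$ and permutes the meridians, so $\Phi(\xi_{i/n})=\xi_{i/n}$ for every $i$. In particular $\Phi$ preserves the $2$-plane $\Span(e_1,e_3)$ spanned by $\xi_0$; combined with $\Phi e_1=e_1$ this forces $Be_3\in\Span(e_3)$, and orthogonality of $B$ then yields $B=\operatorname{diag}(\pm 1,\pm 1)$. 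The four cases correspond to $\Phi\in\{I,\refl{\Sp}^1,\refl{\Sp}^2,\refl{\rs}\}$, where $\Sp^2=\{x_4=0\}$ and $\rs=\{x_3=0\}$.

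The main obstacle is ruling out the two unwanted reflections $\refl{\Sp}^2$ and $\refl{\rs}$. The key observation is that $\Sp^1\cup\xi_0\subset\Gamma_n$ forces $T_{p_0}\Gamma_n=\Span(e_2,e_3)=T_{p_0}\Sp^2$ at $p_0=(1,0,0,0)$, so $\Gamma_n$ is tangent to $\Sp^2$ there. The tangential-intersection argument from the second paragraph of the proof of Corollary~\ref{cor:hemispheres} then applies verbatim: if $\refl{\Sp}^2$ preserved $\Gamma_n$, a local-graph analysis at $p_0$ would force $\Gamma_n$ locally into $\Sp^2$, and the open-closed extension from that same proof would upgrade this to $\Gamma_n=\Sp^2$, contradicting $\genus(\Gamma_n)\geq 1$ established in Section~\ref{subsec:genus}. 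Finally, since $\refl{\Sp}^1\circ\refl{\rs}=\refl{\Sp}^2$ and $\refl{\Sp}^1\in\grp{G}_n\subset\symgrp{\Gamma_n}$, the hypothetical symmetry $\refl{\rs}$ would immediately re-introduce $\refl{\Sp}^2$ and is thereby excluded. Thus after the reduction $\Phi\in\{I,\refl{\Sp}^1\}\subset\grp{G}_n$, and undoing the reduction gives $\Phi\in\grp{G}_n$.
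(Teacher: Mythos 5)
Your proof is correct, and it reaches the conclusion by a route that is built on the same block decomposition $\Phi=A\oplus B$ with respect to $\Span(e_1,e_2)\oplus\Span(e_3,e_4)$ (equivalently the preserved pair $\Sp^1,\Sp^1_\perp$) but is normalized and finished differently from the paper's. The paper composes $\Phi$ with $\refl{\xi}_0$ and/or $\refl{\Xi}_{1/(2n)}$ until both blocks are reflections of $\Ogroup(2)$, so that $\Phi=\refl{\xi}$ for a great circle $\xi$ orthogonal to $\Sp^1$, and then invokes Lemma~\ref{lem:refls_through_ortho_circles} wholesale. You instead use the ``in particular'' clause of that lemma to identify the meridians, note that $A$ must permute their endpoints and hence lies in the dihedral image of $\grp{A}_n$, normalize to $A=\mathrm{id}$, derive $B=\operatorname{diag}(\pm1,\pm1)$ from the invariance of $\xi_0$, and exclude the two unwanted diagonal reflections $\refl{\Sp}^2$ and $\refl{\rs}$ by the tangency/open-closed argument of Corollary~\ref{cor:hemispheres} (together with the trick of composing with $\refl{\Sp}^1$ to reach $\refl{\rs}$, which is genuinely needed since $\Gamma_n$ is \emph{not} tangent to $\rs$ along $\Sp^1$). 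Your version is slightly longer but makes the tangency mechanism explicit rather than leaving it inside Lemma~\ref{lem:refls_through_ortho_circles}'s proof, and it leans on that lemma only for the identification of the meridians rather than its full statement. Both arguments are sound.
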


\begin{proof}
Since $\Phi$ preserves $\Sp^1$,
it also preserves $\Sp^1_\perp$,
and its restriction to each of these two great circles is either a reflection or rotation.
If both restrictions are reflections,
then $\Phi$ is a reflection through a great circle orthogonally intersecting $\Sp^1$ (and $\Sp^1_\perp$),
so we conclude by appealing to Lemma \ref{lem:refls_through_ortho_circles}.
In the remaining cases, we reduce to this last one as follows.
If both restrictions are rotations,
then we compose with $\refl{\xi}_0$ to reduce to the previous case.
If $\Phi|_{\Sp^1}$ is a reflection and $\Phi|_{\Sp^1_\perp}$ a rotation,
then composition with $\refl{\Xi}_{1/(2n)}$ again reduces to the first case.
Finally, if $\Phi|_{\Sp^1}$ is a rotation and $\Phi|_{\Sp^1_\perp}$ a reflection,
then $\refl{\xi}_0 \circ \refl{\Xi}_{1/(2n)} \circ \Phi$
is a reflection through a great circle orthogonally intersecting $\Sp^1$.
\end{proof}

The proof of the Lemma \ref{lem:invariance-of-Sp1} below proceeds in part by counting umbilics on $\Gamma_n$.
At an umbilic point a minimal surface in $\Sp^3$ agrees with its totally geodesic tangent sphere to at least second order. 
We define the \emph{multiplicity of an umbilic point} to be this order of contact less one.

\begin{lemma}[Umbilics]
\label{lem:umbilics}
Let $2 \leq n \in \N$.
\begin{enumerate}[label={\normalfont(\roman*)},nosep]
\item \label{lem:umbilics:count}  Counting with multiplicity, $\Gamma_n$ has at most $8n-4$ umbilics.
\item \label{lem:umbilics:obvious} Assuming $n\geq 3$, the two points $(0,0,\pm 1,0)$ are umbilics of multiplicity at least $n-2$.
\end{enumerate}
\end{lemma}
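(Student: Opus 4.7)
I plan to prove \ref{lem:umbilics:count} via the Hopf differential on the minimal surface $\Gamma_n$ and \ref{lem:umbilics:obvious} by a direct local analysis at the pole. For \ref{lem:umbilics:count}, recall that the trace-free part of the second fundamental form of a minimal surface in $\Sp^3$ defines, via the Codazzi equations, a holomorphic quadratic differential $Q$ with respect to the induced conformal structure, whose zero set counted with multiplicity coincides exactly with the umbilic locus. Since $\Gamma_n$ has genus at least one by Theorem~\ref{thm:main}\,\ref{thm:main-genus}, it is not a round sphere and hence not totally umbilic, so $Q\not\equiv 0$. Therefore $Q$ has exactly $4\genus(\Gamma_n)-4$ zeros with multiplicity, and the upper bound $\genus(\Gamma_n)\leq 2n$ yields at most $8n-4$ umbilics.

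For \ref{lem:umbilics:obvious}, I fix $p_0=(0,0,1,0)$; the antipode $(0,0,-1,0)$ is treated identically or via the symmetry $\refl{\Sp}^1\in\grp{G}_n$ swapping the two poles. First I would compute that each meridian $\xi_{k/n}\subset\Gamma_n$ passes through $p_0$ with tangent direction $(-\cos(k\pi/n),-\sin(k\pi/n),0,0)$; for $n\geq 2$ these $n$ directions span pairwise distinct lines in $T_{p_0}\Sp^2$, so $T_{p_0}\Gamma_n=T_{p_0}\Sp^2$. Next, the cyclic subgroup $\Z_n$ from \eqref{eqn:Zn} fixes $p_0$ and its differential acts on $T_{p_0}\Gamma_n$ as rotation by $2\pi/n$ and trivially on the normal line $N_{p_0}\Gamma_n$; the $\Z_n$-equivariance of $\Gamma_n$ therefore forces the scalar second fundamental form $\mathrm{II}_{p_0}$ to be invariant under this rotation. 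For $n\geq 3$ the only such invariant quadratic forms on a $2$-dimensional Euclidean space are scalar multiples of the metric, and minimality (trace zero) then forces $\mathrm{II}_{p_0}=0$. Thus $p_0$ is umbilic and its totally geodesic tangent sphere is $\Sp^2$.

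To compute the multiplicity at $p_0$, I would represent $\Gamma_n$ locally as the graph of a smooth function $f$ over $\Sp^2$ in the $x_4$-direction. Each meridian $\xi_{k/n}\subset\Gamma_n\cap\Sp^2$ lies in the zero set of $f$, producing $n$ smooth curves through $p_0$ with pairwise distinct tangent lines. An iterated application of the smooth division lemma—dividing $f$ successively by a defining function of each meridian and using that any two distinct meridians meet near $p_0$ only at $p_0$ itself—shows that $f$ vanishes to order at least $n$ at $p_0$. This translates directly into contact of order at least $n-1$ between $\Gamma_n$ and $\Sp^2$ at $p_0$, and therefore umbilic multiplicity at least $n-2$.

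The main obstacle is the umbilic rigidity step: the argument genuinely requires $\Z_n$ to act as a nontrivial rotation of order at least three on $T_{p_0}\Gamma_n$, which is precisely why the statement excludes $n=2$ (where rotation by $\pi$ acts as $-\mathrm{Id}$ and preserves every symmetric form). Matching the normalization of ``order of contact'' fixed just before the lemma with the order of vanishing of $f$ should cause no trouble once one notes that the tangent sphere $\Sp^2$ itself has graph function identically zero in our chosen local coordinates.
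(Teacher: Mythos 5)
Your proof is correct and follows essentially the same approach as the paper. For part~\ref{lem:umbilics:count} you re-derive the $4g-4$ umbilic count via the Hopf differential, which is exactly the content of Lawson's Proposition~1.5 that the paper cites before applying the genus bound from Theorem~\ref{thm:main}\,\ref{thm:main-genus}; for part~\ref{lem:umbilics:obvious} you spell out in full the observation—that the $n$ meridians pass through each of $(0,0,\pm1,0)$ with pairwise distinct tangent lines—which the paper simply calls ``evident,'' although your intermediate $\Z_n$-equivariance step establishing umbilicity is logically redundant, since your division-lemma argument already shows the graph function vanishes to order at least $n\geq3$, which on its own forces $\mathrm{II}_{p_0}=0$.
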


\begin{proof}
Claim \ref{lem:umbilics:count} follows from \cite[Proposition~1.5]{Lawson1970} and Theorem~\ref{thm:main}\,\ref{thm:main-genus}.
Claim \ref{lem:umbilics:obvious} is evident from the fact that the $n$ great circles $\xi_{i/n}$ with $i \in \Z$ lie on $\Gamma_n$ and all intersect at each of the two points in question.
\end{proof}

\begin{lemma}[Invariance of $\Sp^1$ under a symmetry]
\label{lem:invariance-of-Sp1}
Let $4 \leq n \in \N$ and $\Phi \in \symgrp{\Gamma_n}$.
Then $\Phi(\Sp^1)=\Sp^1$.
\end{lemma}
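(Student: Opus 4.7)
The plan is to show that $\Phi$ preserves the $(x_1,x_2)$-coordinate $2$-plane $P\subset\R^4$ as a linear subspace; once this is established, $\Phi(\Sp^1)=\Phi(\Sp^3\cap P)=\Sp^3\cap P=\Sp^1$.

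First, at the pole $p:=(0,0,1,0)$, the $n\geq 4$ meridians $\xi_{i/n}\subset\Gamma_n\cap\Sp^2$ meet at $p$ with $n$ pairwise distinct tangent directions in $T_p\Sp^2$. Since $n\geq 2$, these directions span $T_p\Sp^2$, so $T_p\Gamma_n=T_p\Sp^2=P$, viewed as a linear subspace of $T_p\R^4=\R^4$ (and analogously $T_{-p}\Gamma_n=P$). Writing $q:=\Phi(p)\in\Gamma_n$ and using $\R$-linearity of $\Phi$, the images $c_i:=\Phi(\xi_{i/n})\subset\Gamma_n$ are $n$ distinct great circles through both $q$ and $-q=\Phi(-p)$, with tangent directions at $q$ at mutual angles $\pi/n$ spanning $T_q\Gamma_n=\Phi(P)$. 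By the local zero-set analysis in the proof of Lemma~\ref{lem:umbilics}\,\ref{lem:umbilics:obvious} applied at $q$, the point $q$ is an umbilic of $\Gamma_n$ of multiplicity at least $n-2$; and since each $c_i\subset\Gamma_n$ is a great circle on the minimal surface $\Gamma_n$, Schwarz reflection $\refl{c_i}\in\symgrp{\Gamma_n}$ fixes $q$, and the $n$ such reflections generate a dihedral subgroup of order $2n$ of $\symgrp{\Gamma_n}$ fixing $q$.

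The critical step is to force $q\in\Sp^1_\perp$. At every $r\in\Sp^1_\perp\cap\Gamma_n$, the tangent plane $T_r\Gamma_n\subset r^\perp=T_r\Sp^3$ must be $\Z_n$-invariant, and for $n\geq 3$ the only $\Z_n$-invariant $2$-plane in $r^\perp$ is $P$; hence $T_r\Gamma_n=P$. Combined with Corollary~\ref{cor:hemispheres}, which supplies $n$ arcs of $\Gamma_n$ entering $r$ at mutual angles $\pi/n$ inside $P$, the same local analysis shows $r$ is an umbilic of multiplicity at least $n-2$; thus every point of $\Sp^1_\perp\cap\Gamma_n$, of which there are $j\in\{2,6\}$ by Lemma~\ref{lem:genus}, is such an umbilic. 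Suppose for contradiction $q\notin\Sp^1_\perp$. Since $\Z_n\subset\grp{G}_n$ acts freely off $\Sp^1_\perp$, the $\Z_n$-orbit of $q$ contributes $n$ further distinct umbilics of multiplicity at least $n-2$ by $\grp{G}_n$-equivariance. Lemma~\ref{lem:umbilics}\,\ref{lem:umbilics:count} then forces $(j+n)(n-2)\leq 8n-4$, which already yields a contradiction for sufficiently large $n$. For the remaining small values of $n$, one supplements the umbilic count with the dihedral stabilizer of $q$ in $\symgrp{\Gamma_n}$ of order $2n$ together with the structural results of Lemmas~\ref{lem:refls_through_ortho_circles} and~\ref{lem:syms_preserving_Sp1}; a case analysis of $\grp{G}_n$-stabilizers at the axes and axial intersections of $\grp{G}_n$-symmetry off $\Sp^1_\perp$ then exhausts and eliminates every remaining candidate position for $q$.

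Once $q\in\Sp^1_\perp\cap\Gamma_n$, the tangent-plane identity $T_q\Gamma_n=P$ (from $\Z_n$-invariance) together with $T_q\Gamma_n=\Phi(P)$ gives $\Phi(P)=P$, whence $\Phi(\Sp^1)=\Sp^1$. The main obstacle is the case analysis for small $n$ in the third paragraph, where the bare umbilic count is insufficient and must be combined with the dihedral-stabilizer constraint.
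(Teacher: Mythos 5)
Your reduction is conceptually appealing: show $q:=\Phi(p)\in\Sp^1_\perp$ and deduce $\Phi(P)=P$ from the $\Z_n$-rigidity of $T_q\Gamma_n$. That sufficiency step is sound, as are the tangent-plane identities $T_p\Gamma_n=P$ and $T_q\Gamma_n=\Phi(P)$, and the observation that for $n\geq 3$ the only $\Z_n$-invariant $2$-plane in $q^\perp$ (for $q\in\Sp^1_\perp$) is $P$. The claim that every point of $\Sp^1_\perp\cap\Gamma_n$ is an umbilic of multiplicity at least $n-2$ is also correct, though it is cleanest to argue via the $\Z_n$-equivariance of the Hopf differential rather than via Corollary~\ref{cor:hemispheres} as you invoke it.

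The gap, which you flag yourself, is genuine and not small. The inequality $(j+n)(n-2)\leq 8n-4$ with $j=2$ forces $n\leq 8$, so the bare count fails precisely on $n\in\{4,\ldots,8\}$, which is most of the range where the lemma is new; with $j=6$ it still leaves $n\in\{4,5\}$ open. The remedy you sketch --- appealing to the dihedral stabilizer of $q$ of order $2n$, to Lemmas~\ref{lem:refls_through_ortho_circles} and~\ref{lem:syms_preserving_Sp1}, and to ``a case analysis of $\grp{G}_n$-stabilizers'' --- is not carried out, and as written it risks circularity: Lemma~\ref{lem:syms_preserving_Sp1} is a statement \emph{conditional on} $\Phi(\Sp^1)=\Sp^1$, which is exactly what one is trying to prove. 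The paper closes precisely this gap by a case analysis not on the position of the single point $q$ but on how the entire circle $\Phi(\Sp^1)$ meets $\Sp^1$ (orthogonally, nonorthogonally, or not at all). In the nonorthogonal and disjoint cases one gets several \emph{great circles} on $\Gamma_n$ through common umbilics, and counting all umbilics along those circles (not merely a single $\Z_n$-orbit of $q$) yields sharper bounds like $14n-26\leq 8n-4$, which already rule out every $n\geq 4$. Without that extra leverage from whole great circles --- or some equally quantitative substitute --- your argument does not cover the small $n$ it most needs to.
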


\begin{proof}
We will prove the contrapositive, assuming that $\Phi \in \symgrp{\Gamma_n}$ maps $\Sp^1$ to a great circle $\Phi(\Sp^1) \neq \Sp^1$ and concluding that $n<4$.
By Lemma~\ref{lem:umbilics}\,\ref{lem:umbilics:obvious} the two poles $\pm p\vcentcolon = (0,0,\pm1,0)$ are umbilic points of multiplicity at least $n-2$ on $\Gamma_n$.  
We distinguish the following three cases. 
\begin{enumerate}[label={\itshape Case \arabic*:},wide,ref={\arabic*}]
\item\label{case1} $\Phi(\Sp^1)$ intersects $\Sp^1$ orthogonally. 
By Lemma \ref{lem:refls_through_ortho_circles}
we have $\Phi(\Sp^1) = \xi_{i/n}$
for some $i \in \Z$.
Each of the two umbilics $\pm p$
lies on every $\xi_{i/n}$ with $i \in \Z$,
and so in particular on $\Phi(\Sp^1)$.
Applying $\Phi^{-1}$
we find that two additional umbilics, each of multiplicity at least $n-2$ lie on $\Sp^1$.
From the $\grp{G}_n$-equivariance
we then in fact have $2n$ umbilics of multiplicity $n-2$
on $\Sp^1$
and on each great circle in the collection
$\{\xi_{k/n}\}_{k \in \Z} = \grp{G}_n(\Phi^{-1}(\Sp^1))$.
These last $n$ great circles intersect in pairs
only at $\pm p$,
so in total we have at least
$n(2n-2)+2 = 2n^2-2n + 2$ distinct umbilics,
each of multiplicity at least $n-2$.
Thus, counting with multiplicity,
the number of umbilics is at least
$(n-2)(2n^2-2n+2) = 2n^3 - 6n^2 + 6n -4$
and, by Lemma~\ref{lem:umbilics}\,\ref{lem:umbilics:count},
at most $8n-4$, requiring $n \leq 3$, and completing the analysis of Case \ref{case1}.

\item\label{case2} $\Phi(\Sp^1)$ intersects $\Sp^1$ nonorthogonally.
Let $\pm q\in\Sp^1$ be the two antipodal points of intersection.
In fact we must then have at least three great circles 
$\Sp^1$, $\gamma_1=\Phi(\Sp^1)$, and $\gamma_2 \vcentcolon= \refl{\Sp}^1(\gamma_1)$ on $\Gamma_n$
all intersecting at $\pm q$, and with neither $\gamma_1$ nor $\gamma_2$
orthogonal to $\Sp^1$. 
In particular $q$ is an umbilic (of multiplicity at least one),
and by the symmetries we then have at least $2n$ umbilics 
(each of multiplicity at least one)
on each of these last three great circles,
but these circles intersect only at $\pm q$,
so on these three circles we have at least
$3(2n-2)+2=6n-4$ distinct umbilics.
We recall from \eqref{eqn:Zn} that $\refl{\xi}_{1/n} \circ \refl{\xi}_0\in\grp{G}_n$ is a rotation of angle 
$2\pi/n$ along $\Sp^1$ and consider 
\begin{align*}
q'&\vcentcolon=(\refl{\xi}_{1/n} \circ \refl{\xi}_0)q,
&
\gamma_3&\vcentcolon=(\refl{\xi}_{1/n} \circ \refl{\xi}_0)\gamma_1,
&
\gamma_4&\vcentcolon=(\refl{\xi}_{1/n} \circ \refl{\xi}_0)\gamma_2,
\end{align*}
so that
$q'\notin \{q,-q\}$ and 
$\{q',-q'\} = \Sp^1 \cap \gamma_3 \cap \gamma_4$;
note also that neither $\gamma_3$ nor $\gamma_4$ intersects $\Sp^1$ orthogonally. 
Since $q'\neq \pm q$, each of $\gamma_3$ and $\gamma_4$
is distinct from each of the three circles
$\Sp^1$, $\gamma_1$, $\gamma_2$,
and they are also distinct from one another.
Moreover, each of the five circles in question
meets any other in exactly two antipodal points.
We conclude that on these five great circles we have
at least $(6n-4) + 2(2n-6) = 10n-16$ distinct umbilics
(of multiplicity at least one).

The circle $\Sp^1$ is disjoint from $\Sp^1_\perp$. 
None of the four great circles $\gamma_1,\ldots,\gamma_4\subset\Gamma_n$ 
can pass through the umbilics $\pm p$ on $\Sp^1_\perp$,
because if one did,
it would then intersect $\Sp^1_\perp$ orthogonally (because $\Gamma_n$ does so)
and therefore also intersect $\Sp^1$ orthogonally,
contradicting our assumption. 
Thus these last two umbilics are distinct
from the $10n-16$ identified above
and, moreover,
by reflection through any one of the $\gamma_i$
we identify two antipodal umbilics of multiplicity at least $n-2$
distinct from $\pm p$.
Counting multiplicity,
we then have at least
$(10n-16-2) + 4(n-2) = 14n - 26$ umbilics.
Appealing again to the upper bound $8n-4$ from Lemma~\ref{lem:umbilics}\,\ref{lem:umbilics:count}, 
we conclude $n < 4$,
completing the proof in case \ref{case2}.

\item\label{case3} $\Phi(\Sp^1) \cap \Sp^1 = \emptyset$.
Then also $\Phi(\Sp^1_\perp) \cap \Sp^1_\perp = \emptyset$ 
and $q\vcentcolon=\Phi(p)\notin\{p,-p\}$.
We distinguish the two cases wherein
(\ref{case3}.a) $q \in \Sp^1$
or (\ref{case3}.b) $q \not\in \Sp^1$.
In the first case the symmetries yield at least
$2n$ umbilics on each of the disjoint great circles
$\Sp^1$ and $\Phi(\Sp^1)$,
each of multiplicity at least $n-2$.
Using again Lemma~\ref{lem:umbilics}\,\ref{lem:umbilics:count},
we conclude $n<4$ in case (\ref{case3}.a).
In case (\ref{case3}.b) we consider the orbit 
$Q$ of $q$ under the subgroup
$\sk{\refl{\Xi}_{1/(2n)},\refl{\Xi}_{-1/(2n)}}$ of $\grp{G}_n$.
Since $q \notin \Sp^1 \cup \Sp^1_\perp$, the set 
$Q \cup \refl{\Sp}^1(Q)$ has cardinality $4n$,
so as in case (\ref{case3}.a)
we have at least $4n$ distinct umbilics of multiplicity at least $n-2$,
leading again to the conclusion that $n<4$, and ending the proof. \qedhere
\end{enumerate}
\end{proof}

\begin{proof}[Proof of Theorem~\ref{thm:main}\,\ref{thm:main-symmetry}]
Lemma \ref{lem:invariance-of-Sp1} and Lemma \ref{lem:syms_preserving_Sp1} imply  
$\symgrp{\Gamma_n}=\grp{G}_n$ for all $n\geq4$.
\end{proof}

\pagebreak[3]

\subsection{Morse index lower bound}
\label{subsec:index}

Savo's  \cite[Theorem 1.3]{Savo2010} general index estimate states that any minimal surface in $\Sp^3$ with genus $g\geq1$ has Morse index at least $g/2+4$.  
Thus, the surfaces constructed in Theorem~\ref{thm:main} have Morse index $\ind(\Gamma_n)\geq n+3$. 
By leveraging the prescribed symmetry group, we can significantly sharpen this general lower bound. 
Our result can be viewed as one possible spherical analogue of the free-boundary index estimate \cite[Proposition 4.2]{Carlotto2025} by Carlotto and the authors.
The proofs are both inspired by estimates of Montiel--Ros \cite[Lemmata 12--13]{MontielRos1991}
in the context of complete minimal surfaces of finite total curvature in $\R^3$,
especially as they were employed in \cite{KapouleasWiygul2020} by Kapouleas and the second author. 
An alternative approach could be based on the index lower bound \cite[Theorem~3]{ChoeVision} proved by Choe
rather than the Montiel--Ros methodology.

Recalling the notation introduced in \eqref{eqn:group}, we define the symmetry subgroup 
\begin{align*}
\grp{Y}_n
&\vcentcolon=\sk{\refl{\Xi}_{\frac{1}{2n}}, ~ \refl{\Xi}_{-\frac{1}{2n}}}
    \subset\grp{A}_n \subset\grp{G}_n
\end{align*}
for each $2\leq n\in\N$. The group $\grp{Y}_n$ is isomorphic to the \emph{pyramidal group} of order $2n$ (see~\cite{FranzKetoverSchulz,Carlottob,Carlotto}).

\begin{proposition}[Index lower bounds under pyramidal symmetry]
\label{prop:pyramidal}
Given $2\leq n\in\N$ let $\Sigma\subset\Sp^3$ be any closed, embedded, $\grp{Y}_n$-equivariant minimal surface with genus at least two.
\begin{enumerate}[label={\normalfont(\roman*)}]
\item\label{prop:pyramidal2n-1} Then $\Sigma$ has Morse index
\(\ind(\Sigma) \geq 2n-1\).
\item\label{prop:pyramidal4n-1} If moreover $\Sigma \supset \Sp^1$, then in fact
\(\ind(\Sigma) \geq 4n-1\).
\end{enumerate}
\end{proposition}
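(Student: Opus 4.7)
The plan is to construct an explicit subspace of $C^\infty(\Sigma)$ of dimension at least $2n-1$, respectively $4n-1$ under the hypothesis of part \ref{prop:pyramidal4n-1}, on which the Jacobi quadratic form
\begin{equation*}
Q[u]=\int_\Sigma\abs{\nabla u}^2-(\abs{A}^2+2)u^2
\end{equation*}
is negative definite. The baseline comes from the ambient coordinate restrictions $u_i\vcentcolon=x_i|_\Sigma$: by minimality $\Delta_\Sigma u_i=-2u_i$, whence $Q[u_i]=-\int_\Sigma\abs{A}^2 u_i^2\leq 0$, and the associated Gram matrix is negative definite because $\Sigma$ of positive genus is not contained in any hyperplane of $\R^4$. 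This already gives $\ind(\Sigma)\geq 4$ and in particular settles part \ref{prop:pyramidal2n-1} for $n=2$.

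For $n\geq 3$ the argument exploits the $\grp{Y}_n$-action to produce additional negative directions. Since $J=\Delta_\Sigma+\abs{A}^2+2$ commutes with $\grp{Y}_n$, the space $C^\infty(\Sigma)$ splits into isotypic components $C^\infty(\Sigma)_\chi$ indexed by irreducible representations of the pyramidal group, and the index is the sum of the indices on each summand. The coordinate test functions occupy only two of these: $u_3,u_4$ lie in the trivial isotypic (since $x_3,x_4$ are pointwise fixed by $\grp{Y}_n$), while $u_1,u_2$ span the standard two-dimensional representation of wavenumber~$1$. For each remaining wavenumber $k=2,\ldots,n-1$, I would follow the Montiel--Ros strategy as adapted by Kapouleas--Wiygul \cite{KapouleasWiygul2020}: a symmetry-averaging of the coordinate ansatz against the character~$\chi_k$ produces a pair of test functions in the $k$-th two-dimensional isotypic component, with strict negativity of~$Q$ verified directly. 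Careful bookkeeping of linear independence across isotypic components yields the claimed $2n-1$ negative directions; the ``minus one'' reflects a single relation imposed by the Killing-field-induced Jacobi fields in $\ker J$.

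For part \ref{prop:pyramidal4n-1}, the hypothesis $\Sp^1\subset\Sigma$ provides an additional involution $\refl{\Sp}^1|_\Sigma$ acting on $\Sigma$. Each test function from part \ref{prop:pyramidal2n-1} splits into an even and an odd part under this involution, and to double the count I would construct, for every test function of definite parity, a companion of opposite parity on which $Q$ remains negative. Multiplication by the odd factors $u_3$ or $u_4$ (which vanish on the fixed locus $\Sp^1$) is the natural candidate; the product rule for $\abs{\nabla(fg)}^2$, the Jacobi equation for each factor, and integration by parts — exploiting that cross terms vanish on $\Sp^1$ — yield the required negativity for the products. Arranging these against the isotypic decomposition from part \ref{prop:pyramidal2n-1} produces $4n-1$ linearly independent negative directions, again modulo a single Jacobi-field relation.

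The principal obstacle is the verification of strict negativity of $Q$ on the higher-wavenumber test functions. Unlike the coordinate restrictions, harmonic polynomial ansätze such as $\mathrm{Re}(x_1+ix_2)^k$ do \emph{not} restrict to $(-2)$-eigenfunctions of $\Delta_\Sigma$, because the trace identity on $\Sp^3$ contributes an indefinite Hessian term $\mathrm{Hess}_{\Sp^3}(u)(N,N)$ that resists a clean pointwise bound. Montiel--Ros's device of pulling back eigenfunctions via the Gauss map has only a partial spherical analogue, so the concrete verification instead proceeds by perturbing the Jacobi fields $\sk{K,N}$ associated to Killing vector fields $K$ of $\Sp^3$, with cutoffs supported in fundamental domains of the $\grp{Y}_n$-action to enforce linear independence across isotypic components; the alternative route via Choe's \cite{ChoeVision} index estimate signaled in the introductory remarks could bypass this technicality at the cost of a more combinatorial count.
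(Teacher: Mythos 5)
The paper's proof is considerably more economical than what you sketch, and unfortunately your plan contains a gap that you yourself flag as unresolved. The paper takes the \emph{single} Jacobi field $u=\sk{K,N}$ generated by the rotational Killing field $K$ along $\Sp^1$ (the one fixing $\Sp^1_\perp$ pointwise). Because $\Sigma$ is $\grp{Y}_n$-equivariant, the $n$ great spheres $\Xi_{(2j+1)/(2n)}$ are spheres of symmetry, and $u$ is odd under each reflection $\refl{\Xi}_{(2j+1)/(2n)}$, hence vanishes on $\Sigma\cap\Xi_{(2j+1)/(2n)}$. These spheres cut $\Sigma$ into $2n$ pairwise isometric Lipschitz pieces, on each of which $u$ is a nontrivial Dirichlet eigenfunction of the Jacobi operator with eigenvalue $0$. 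The Montiel--Ros-type counting in \cite[Corollary~3.2\;(i)]{Carlotto2025} (with $2n$ pieces and the trivial group) then directly yields $\ind(\Sigma)\geq 2n-1$. For part~(ii) the paper needs no new test functions at all: when $\Sp^1\subset\Sigma$, the field $u$ also vanishes along $\Sp^1$, so its restriction to each piece is not the \emph{first} Dirichlet eigenfunction; the same corollary then gives $4n-1$.

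Your proposal is structurally different and does not close. You build negative directions by hand from coordinate restrictions and, for wavenumbers $k=2,\dots,n-1$, from ``a symmetry-averaging of the coordinate ansatz against the character $\chi_k$.'' But as you candidly note in your final paragraph, there is no clean reason why these averaged ansätze should make $Q$ strictly negative -- the Hessian term has no sign -- and this is precisely where the entire argument would have to be carried out, not merely outlined. The isotypic bookkeeping and the claim that ``the `minus one' reflects a single relation imposed by the Killing-field-induced Jacobi fields'' are plausible-sounding narrative but are not derived. Likewise, for part~(ii) your device of multiplying each test function by $u_3$ or $u_4$ to flip its $\refl{\Sp}^1$-parity does not in general preserve negativity of $Q$: the quadratic form does not behave multiplicatively, and neither the vanishing of $u_3,u_4$ on $\Sp^1$ nor the Jacobi equation for each factor controls the cross terms in a usable way. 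The paper sidesteps all of this by noting that the \emph{same} Jacobi field $u$ already vanishes on $\Sp^1$ under the hypothesis of (ii), so the zero Dirichlet eigenvalue on each piece is not the lowest one -- a single observation that doubles the count. You correctly identify the relevant literature (Montiel--Ros, Kapouleas--Wiygul, Choe), but the specific mechanism the paper extracts from it -- a nodal-domain count for one explicit rotational Jacobi field cut by the spheres of reflective symmetry -- is missing from your outline, and without it the bound is not established.
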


\begin{proof}
Let $u$ be the Jacobi field on $\Sigma$ which is induced by the rotations along $\Sp^1$
(fixing $\Sp^1_\perp$ pointwise). 
Then $u$ is nontrivial, because otherwise $\Sigma$ would have a continuous symmetry
and so be a great sphere or Clifford torus.
Since $\Sigma$ is $\grp{Y}_n$-equivariant,
it has (at least) $n$ different great spheres of symmetry, which we denote by 
$\Xi_{(1+2j)/(2n)}$ for $j\in\Z$.
It follows that
$\Sigma \setminus \bigcup_{j \in \Z} \Xi_{(1+2j)/(2n)}$
consists of exactly $2n$  pairwise isometric 
connected components
$\Omega_1, \ldots, \Omega_{2n}$,
each a Lipschitz domain in $\Sigma$
on which $u$ is an eigenfunction of eigenvalue zero
for the Jacobi operator of $\Sigma$
subject to the homogeneous Dirichlet boundary condition.
The Montiel--Ros lower bound
\cite[Corollary~3.2\;(i)]{Carlotto2025}
(applied with $T$ the Jacobi operator, $G$ trivial,
and $2n$ in place of $n$)
then completes the proof of \ref{prop:pyramidal2n-1}.

Under the additional assumption $\Sp^1 \subset \Sigma$, the restriction of $u$ to any $\Omega_i$ vanishes on $\Omega_i \cap \Sp^1$, so it is not a first Dirichlet eigenfunction for the Jacobi operator on $\Omega_i$. 
Application of \cite[Corollary~3.2\;(i)]{Carlotto2025} concludes the proof.
\end{proof}

\begin{proof}[Proof of Theorem~\ref{thm:main}\,\ref{thm:main-index}]
The minimal surface $\Gamma_n$ satisfies the hypothesis of Proposition~\ref{prop:pyramidal}\,\ref{prop:pyramidal4n-1}. 
\end{proof}

\begin{remark}[Morse index upper bounds]
By \cite{Franz2023} (see also \cite[Theorem~2.14]{FranzKetoverSchulz}) the $\grp{G}_n$-equivariant index of $\Gamma_n$ is at most $1$. 
The presence of odd symmetries such as $\refl{\Sp}^1\in\grp{G}_n$ makes it harder 
to apply Montiel--Ros methods aiming at Morse index upper bounds as e.\,g.~in \cite{Carlotto2025,Carlottob}.  
We refer to \cite{Ejiri2008} for a general upper bound on the Morse index of a minimal surface depending on its area and genus. 
\end{remark}

\begin{conjecture}\label{conj:index}
For every $2\leq n\in\N$ the Morse index of $\Gamma_n\subset\Sp^3$ is equal to $4n+5$. 
\end{conjecture}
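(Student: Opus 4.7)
The plan is to sharpen the gap between the lower bound $4n-1$ from Theorem~\ref{thm:main}\,\ref{thm:main-index} and the constraint $\ind_{\grp{G}_n}(\Gamma_n)\leq 1$ from \cite{Franz2023} into the claimed equality $4n+5$. Since the Jacobi operator on $\Gamma_n$ commutes with the action of $\grp{G}_n$, its spectrum decomposes into isotypic components indexed by the irreducible representations of $\grp{G}_n$. Any proof must therefore identify precisely $4n+5$ negative eigendirections, distributed across these components, and rule out all others.

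For the refined lower bound, the natural strategy is to extend the Montiel--Ros technique employed in the proof of Proposition~\ref{prop:pyramidal}. That proof uses a single Jacobi field, induced by rotations along $\Sp^1$, whose nodal set contains the $n$ great spheres $\{\Xi_{(2j+1)/(2n)}\}_{j\in\Z}$ and the equator. Beyond this, the Lie algebra $\mathfrak{so}(4)$ supplies five further Killing fields of $\Sp^3$ whose normal projections on $\Gamma_n$ are also Jacobi fields; together with the characters of $\grp{G}_n$ under which they transform, and with the coordinate functions $x_i|_{\Gamma_n}$ (eigenfunctions of the Jacobi operator with eigenvalue $-2$), one hopes to produce six additional independent sign-changing directions accounting for the gap $4n-1\mapsto 4n+5$. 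The delicate point is to verify that none of these new test sections lies in the span already detected by the original Montiel--Ros argument, which requires an explicit analysis of nodal sets compatible with the symmetries of $\Gamma_n$.

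For the matching upper bound, a decomposition of the equivariant problem into subgroup-equivariant problems is natural: by applying Franz's equivariant min-max upper bound to subgroups $\grp{H}\subset\grp{G}_n$ obtained as intersections of kernels of characters, one controls the Morse index within each isotypic component, and summing the contributions weighted by representation dimension should reproduce $4n+5$ provided the estimates are sharp in each component. A complementary tool would be the Ros--Savo type comparison between the Jacobi spectrum and the Hodge spectrum on one-forms, adapted to the equivariant setting, which might pin down the contribution from each irreducible representation.

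The main obstacle is the absence of an explicit parametrization of $\Gamma_n$: the min-max construction provides existence and symmetry, but not a Weierstrass-type or conformal representation on which spectral computations could be carried out directly. A reasonable preliminary step would therefore be numerical verification of $\ind(\Gamma_n)=4n+5$ for small $n$ via finite-element discretization, which should at minimum confirm the pattern and possibly suggest the precise mechanism producing the constant $5$. A further consistency check is the borderline case $n=2$, where $\Gamma_2$ is conjectured to coincide with the Lawson surface $\xi_{2,2}$, so the Morse index of $\xi_{2,2}$ must equal $4\cdot2+5=13$; independent knowledge (or computation) of $\ind(\xi_{2,2})$ would already provide nontrivial evidence, while its failure to match $13$ would refute either the conjecture or the conjectured congruence.
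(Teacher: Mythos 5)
This statement is labelled a \emph{conjecture} in the paper and is not proved there; the paper's only supporting argument is the heuristic analogy with the Costa--Hoffman--Meeks index formula $2g+3 = 2(g+1)+1$ (half the symmetry group order plus the catenoid index), which upon substituting $4n$ for half of $|\grp{G}_n|=8n$ and $5$ for the Clifford torus index yields $4n+5$, together with numerical simulations. Your proposal therefore cannot be compared against a ``paper proof'' --- there isn't one --- and it does not itself constitute a proof: it is a research plan, and you explicitly acknowledge the central obstacle (no explicit parametrization of $\Gamma_n$) and fall back on numerical verification, which is exactly the evidentiary status the paper itself claims.

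Two concrete weaknesses in the plan are worth flagging. First, for the upper bound you propose an equivariant Montiel--Ros-type decomposition over subgroups of $\grp{G}_n$, but the remark immediately following Conjecture~\ref{conj:index} in the paper states precisely that the presence of the odd symmetry $\refl{\Sp}^1 \in \grp{G}_n$ makes Montiel--Ros upper-bound arguments hard to run (see also \cite{Carlotto2025,Carlottob}); that is, the paper has already identified this part of your plan as a dead end absent a new idea. Second, your lower-bound accounting is loose: the gap from $4n-1$ to $4n+5$ is $6$, and your candidate test sections are the five remaining Killing fields of $\mathfrak{so}(4)$ (one of the six is already consumed by the rotation field used in Proposition~\ref{prop:pyramidal}) together with the four coordinate restrictions $x_i|_{\Gamma_n}$ --- nine candidates for six slots --- with no mechanism offered either to show that at least six of these are genuinely independent of the $4n-1$ directions already produced, or to show that the remaining candidates cannot add a tenth and spoil the matching upper bound. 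The $n=2$ consistency check via $\xi_{2,2}$ is a good observation, but it requires an independent computation of $\ind(\xi_{2,2})$, which is not available in the literature cited by the paper, so at present it adds no rigour.
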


This conjecture is motivated by the structural analogy with the complete Costa--Hoffman--Meeks surfaces in $\R^3$. 
For any genus $g$, these surfaces are known to have Morse index equal to $2g+3$ by \cite{Morabito2009,Nayatani1993}. 
This value decomposes as $2(g+1)+1$, where $2(g+1)$ is half the order of their symmetry group and $1$ is the index of the complete catenoid. 
A structurally similar formular for the Morse index of their free boundary minimal analogues in the unit ball  has been proposed in \cite[Conjecture~7.6]{Carlotto}. 
Applying this heuristic to our spherical setting yields the formula asserted in Conjecture~\ref{conj:index}. 
Indeed, by Theorem~\ref{thm:main}\,\ref{thm:main-symmetry}, $4n$ is half the order of $\symgrp{\Gamma_n}$, at least for $n\geq4$, and $5$ is the Morse index of the Clifford torus by \cite{Urbano1990}. 
Our numerical simulations provide further support for this conjecture.

\subsection{Distinctness}
\label{subsec:distinctness}

We make a series of ad hoc observations to explain how the minimal surfaces obtained in Theorem~\ref{thm:main} can be distinguished rigorously (in terms of congruence of surfaces in $\Sp^3$) from other closed, embedded minimal surfaces in $\Sp^3$. 
We consider all the examples we have found in the literature.

\begin{definition}
Two subsets $\Sigma,\Sigma'\subset\Sp^3$ are called \emph{congruent} if there exists an ambient isometry $\Phi\colon\Sp^3\to\Sp^3$ such that $\Phi(\Sigma)=\Sigma'$. 
Otherwise, $\Sigma$ is called \emph{geometrically distinct} from $\Sigma'$. 
\end{definition}

\begin{proposition}[Lawson surfaces {\cite[§\,6]{Lawson1970}}]
\label{prop:Lawson}
For any $k,m\in\N$ there exists a closed, embedded minimal surface $\xi_{m,k}\subset\Sp^3$ with the following properties. 
\begin{enumerate}[label={\normalfont(\roman*)}]
\item\label{prop:Lawson-genus} $\xi_{m,k}$ has genus $mk$. 
The surfaces $\xi_{m,k}$ and $\xi_{k,m}$ are congruent.

\item\label{prop:Lawson-axes} $\xi_{m,k}$ contains $(m+1)(k+1)$ great circles and their union  intersects $\Sp^1_\perp$ orthogonally in $2m+2$ equidistant points, and 
intersects $\Sp^1$ orthogonally in $2k+2$ equidistant points. 
\item\label{prop:Lawson-sym} $\xi_{m,k}$
has $m+1$ great spheres of symmetry
containing $\Sp^1$ and their union intersects $\Sp^1_\perp$
orthogonally in $2m+2$ equidistant points. 
Moreover, $\xi_{m,k}$ has $k+1$ great spheres of symmetry
containing $\Sp^1_\perp$ and their union intersects $\Sp^1$
orthogonally in $2k+2$ equidistant points.
In particular, neither $\Sp^1$ nor $\Sp^1_\perp$ is contained in $\xi_{m,k}$ if $k\geq1$.

\item\label{prop:Lawson-extra}For $m \geq 2$ and $k \geq 1$ there are no other great spheres of symmetry than those stated in \ref{prop:Lawson-sym}. 
\end{enumerate}
\end{proposition}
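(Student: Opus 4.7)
The plan is to derive statements~\ref{prop:Lawson-genus}--\ref{prop:Lawson-sym} directly from Lawson's construction in \cite{Lawson1970} and to establish \ref{prop:Lawson-extra} via a direct symmetry analysis in $\Ogroup(4)$. Recall that $\xi_{m,k}$ arises as the analytic continuation by Schwarz reflection of a Plateau solution $D_{m,k}$ for a spherical geodesic quadrilateral $\square_{m,k}\subset\Sp^3$ with two vertices on $\Sp^1$ at distance $\pi/(k+1)$ apart and two vertices on $\Sp^1_\perp$ at distance $\pi/(m+1)$ apart, whose interior angles are $\pi/(m+1)$ at the $\Sp^1$-vertices and $\pi/(k+1)$ at the $\Sp^1_\perp$-vertices; the closed surface $\xi_{m,k}$ is the union of $4(m+1)(k+1)$ congruent copies of $D_{m,k}$.

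Statements \ref{prop:Lawson-genus}--\ref{prop:Lawson-sym} then follow combinatorially. The genus $g=mk$ is extracted from the Euler characteristic of the induced cell decomposition, or equivalently from Gauss--Bonnet applied to the quotient orbifold. Each boundary arc of $D_{m,k}$ extends under the reflection group to a full great circle of $\Sp^3$ contained in $\xi_{m,k}$, yielding $(m+1)(k+1)$ such great circles, each meeting $\Sp^1_\perp$ in exactly one antipodal pair among the $2m+2$ equidistant points and $\Sp^1$ in exactly one antipodal pair among the $2k+2$ equidistant points, proving \ref{prop:Lawson-axes}. The great spheres of symmetry listed in \ref{prop:Lawson-sym} are exactly the spans of $\Sp^1$ with each antipodal pair on $\Sp^1_\perp$ (and symmetrically with the roles of $\Sp^1$ and $\Sp^1_\perp$ interchanged); each reflects $\xi_{m,k}$ onto itself because the corresponding great sphere contains two adjacent boundary arcs of $\square_{m,k}$. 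The condition $\Sp^1\not\subset\xi_{m,k}$ for $k\geq 1$ holds since otherwise $\xi_{m,k}$ would admit a continuous rotational symmetry, forcing it to be the Clifford torus.

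For \ref{prop:Lawson-extra}, let $\Phi\in\symgrp{\xi_{m,k}}$ be a reflection through a great sphere $\Pi$. Since $\Phi$ permutes the $(m+1)(k+1)$ great circles of $\xi_{m,k}$, it permutes their intersection points with $\Sp^1$ and with $\Sp^1_\perp$. The hypothesis $m\geq 2$ yields $2m+2\geq 6$ equidistant points on $\Sp^1_\perp$ which uniquely identify $\Sp^1_\perp$ as the great circle through any three of them; similarly $k\geq 1$ gives $2k+2\geq 4$ points on $\Sp^1$ which analogously determine $\Sp^1$. Hence $\Phi(\{\Sp^1,\Sp^1_\perp\})=\{\Sp^1,\Sp^1_\perp\}$. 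The case in which $\Phi$ swaps $\Sp^1$ and $\Sp^1_\perp$ is eliminated at once by an eigenvalue argument: any involution of $\Ogroup(4)$ exchanging these two orthogonal $2$-planes takes the block form $\bigl(\begin{smallmatrix}0&A\\A^{-1}&0\end{smallmatrix}\bigr)$ for some $A\in\Ogroup(2)$, and a direct computation shows that both $\pm 1$-eigenspaces are then $2$-dimensional, so $\det\Phi=+1$, contradicting $\det\Phi=-1$ for a great-sphere reflection. Therefore $\Phi$ preserves $\Sp^1$ and $\Sp^1_\perp$ individually and is block-diagonal, $\Phi=A\oplus B$ with $A,B\in\Ogroup(2)$. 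The eigenvalue pattern $(+1,+1,+1,-1)$ imposed by $\Pi$ then forces either $A=\mathrm{id}$ with $B$ a reflection or $B=\mathrm{id}$ with $A$ a reflection, so $\Pi$ contains one of $\Sp^1$ or $\Sp^1_\perp$.

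The main technical obstacle is then to show that the only admissible reflections $B\in\Ogroup(2)$ of $\Sp^1_\perp$ are the $m+1$ through antipodal pairs of vertices listed in \ref{prop:Lawson-sym}, rather than the other ``midpoint'' reflections bisecting consecutive vertex pairs, which at the level of the vertex set are combinatorially indistinguishable. I would address this by invoking the explicit embedding of the fundamental piece $D_{m,k}$: Lawson's Plateau solution lies in a distinguished spherical simplex cut out by the four great spheres containing the boundary arcs of $\square_{m,k}$, and one verifies directly that the midpoint-axis reflections map this simplex out of itself, so they cannot preserve the decomposition of $\xi_{m,k}$ into its $4(m+1)(k+1)$ fundamental copies; only the $m+1$ vertex-axis reflections (and the $k+1$ symmetric counterparts) survive, completing \ref{prop:Lawson-extra}.
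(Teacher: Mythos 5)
Your argument for items \ref{prop:Lawson-genus}--\ref{prop:Lawson-sym} is, as you intend, just a reading-off of Lawson's construction, which is exactly what the paper does (citing \cite[\S\,6]{Lawson1970}). For item \ref{prop:Lawson-extra}, however, the paper does not give a self-contained argument; it simply refers to \cite[Lemma~3.10]{KW-lchar}. Your proposal replaces that citation with a direct argument, and the first three steps of it are clean and correct: the intersection points of the $(m+1)(k+1)$ great circles of $\xi_{m,k}$ lie on $\Sp^1 \cup \Sp^1_\perp$, so any great-sphere reflection $\Phi$ in the symmetry group permutes them and (using $2m+2\geq 6$ to pin down a great circle from three of its points) must satisfy $\Phi(\{\Sp^1,\Sp^1_\perp\})=\{\Sp^1,\Sp^1_\perp\}$; the block-form determinant computation then rules out a swap and forces $\Pi\supset\Sp^1$ or $\Pi\supset\Sp^1_\perp$. (A small simplification: once $\Phi(\Sp^1_\perp)\in\{\Sp^1,\Sp^1_\perp\}$ is known, $\Phi(\Sp^1)=\Phi(\Sp^1_\perp)^\perp$ follows automatically, so the ``$2k+2\geq 4$ points'' clause is not actually needed.)

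The genuine gap is in your final step, which you yourself flag as ``the main technical obstacle''. You claim to exclude the ``midpoint'' reflections by verifying that they move the fundamental simplex off itself and so ``cannot preserve the decomposition of $\xi_{m,k}$ into its $4(m+1)(k+1)$ fundamental copies''. This inference is not valid: a symmetry of $\xi_{m,k}$ is under no obligation to preserve the cell decomposition induced by the reflection group $\grp{A}$ used in the construction --- it would simply generate a larger group $\sk{\grp{A},\Phi}$ with a finer tessellation. (Indeed, for $m=k$ there are orientation-preserving symmetries exchanging $\Sp^1$ and $\Sp^1_\perp$ that do not preserve the decomposition in the obvious way, so one really cannot assume symmetries respect it.) The obvious attempt to repair this --- compose a hypothetical midpoint reflection with a known vertex reflection from \ref{prop:Lawson-sym} to get a rotation about $\Sp^1$ by $\pi/(m+1)$ and look for a contradiction --- also does not immediately work, because that rotation happens to map the vertex set on $\Sp^1_\perp$ to itself. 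Excluding it therefore requires an actual examination of $\xi_{m,k}$ itself (e.g.\ of the tangent directions of the $m+1$ great circles at a point of $\Sp^1\cap\xi_{m,k}$, or of orientation behaviour of the tessellation), and this is precisely the content that \cite[Lemma~3.10]{KW-lchar} supplies. As written, your step 4 asserts a conclusion without a proof, so the argument for \ref{prop:Lawson-extra} is incomplete.
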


Proposition~\ref{prop:Lawson}\,\ref{prop:Lawson-extra} can be seen from \cite[Lemma~3.10]{KW-lchar}, referring also to the proof in case $k=m$; in this reference $M=M[m,k]=\xi_{m-1,k-1}$.

\begin{lemma}[Distinctness from the Lawson surfaces]
\label{lem:Lawson}
For any $3\leq n\in\N$ the minimal surface $\Gamma_n\subset\Sp^3$ constructed in Theorem~\ref{thm:main} is geometrically distinct from any of the Lawson surfaces $\xi_{m,k}$. 
\end{lemma}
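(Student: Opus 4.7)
The plan is to derive a contradiction by transporting the distinguished configuration of great circles contained in $\Gamma_n$ through a hypothetical congruence $\Phi\in\Ogroup(4)$ with $\Phi(\Gamma_n)=\xi_{m,k}$, and invoking the rigid incidence structure of the Lawson surfaces. Matching the genera via Theorem~\ref{thm:main}\,\ref{thm:main-genus} and Proposition~\ref{prop:Lawson}\,\ref{prop:Lawson-genus} first forces $mk\in\{2n-2,2n\}$, so in particular $m,k\geq 1$.

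The key step is to set $C_0:=\Phi(\Sp^1)$ and $C_i:=\Phi(\xi_{(i-1)/n})$ for $1\leq i\leq n$, using the great circles provided by Theorem~\ref{thm:main}\,\ref{thm:main-axes}. This yields $n+1$ distinct great circles contained in $\xi_{m,k}$ for which $C_1,\dots,C_n$ share the antipodal pair $\pm q:=\Phi(\pm(0,0,1,0))$, each $C_i$ meets $C_0$ orthogonally, and $C_0$ misses $\pm q$ (since $(0,0,\pm 1,0)\notin\Sp^1$). By Proposition~\ref{prop:Lawson}\,\ref{prop:Lawson-axes} together with the rigidity of Lawson's construction, any two distinct great circles of $\xi_{m,k}$ meet only at vertex points on $\Sp^1\cup\Sp^1_\perp$, so $\pm q$ must be a vertex, without loss of generality on $\Sp^1_\perp\cap\xi_{m,k}$. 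The great circle $C_0$ itself then has exactly two antipodal pairs of vertices, $\pm p_1\in\Sp^1\cap\xi_{m,k}$ and $\pm p_2\in\Sp^1_\perp\cap\xi_{m,k}$, and each $C_i$ must meet $C_0$ at one of these two pairs. A short case analysis completes the argument: if $\pm q=\pm p_2$, then the $C_i$ lie among the $k+1$ equiangularly-spaced great circles through $\pm p_2$, and the orthogonality condition admits at most one of them (which would require $\pi/2$ to be a multiple of $\pi/(k+1)$); if $\pm q\neq\pm p_2$, then $C_i$ cannot meet $C_0$ at $\pm p_2$, for otherwise $C_i$ would contain the four distinct $\Sp^1_\perp$-points $\pm p_2$ and $\pm q$ and therefore coincide with $\Sp^1_\perp\not\subset\xi_{m,k}$ (contradicting Proposition~\ref{prop:Lawson}\,\ref{prop:Lawson-sym}), so $C_i$ must meet $C_0$ at $\pm p_1$ and is then uniquely determined by its vertex pairs $\pm p_1$ and $\pm q$, forcing the $C_i$ to coincide. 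Either conclusion contradicts the existence of $n\geq 3$ distinct orthogonal meridian images.

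The main obstacle I foresee is justifying rigorously the Lawson rigidity step: that the $(m+1)(k+1)$ great circles listed in Proposition~\ref{prop:Lawson}\,\ref{prop:Lawson-axes} exhaust all great circles contained in $\xi_{m,k}$ and that their pairwise intersections occur only at the stated vertices on $\Sp^1\cup\Sp^1_\perp$. Both are classical consequences of Lawson's symmetric construction and Proposition~\ref{prop:Lawson}\,\ref{prop:Lawson-extra}, but they require particular care in the symmetric case $m=k$; for $n=3$ the Lawson surface $\xi_{2,2}$ of genus $4=2n-2$ is the relevant instance, where additional ambient symmetries (producing the full symmetry group of order $144$, much larger than the dihedral-type group of order $4(m+1)(k+1)$ generated by the reflections of Proposition~\ref{prop:Lawson}\,\ref{prop:Lawson-sym}) must be shown not to create extra great circles capable of accommodating the forbidden incidence structure.
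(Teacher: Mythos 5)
Your proposal takes a genuinely different route from the paper's, and it rests on a step that you yourself flag as ``the main obstacle''---a step which Proposition~\ref{prop:Lawson}, as stated, does not supply. You transport the $n+1$ great circles $\Sp^1,\xi_0,\ldots,\xi_{(n-1)/n}$ through the hypothetical congruence $\Phi$ and then assert that every great circle contained in $\xi_{m,k}$ is among the $(m+1)(k+1)$ circles of Proposition~\ref{prop:Lawson}\,\ref{prop:Lawson-axes}, and that distinct such circles meet only at the listed vertices on $\Sp^1\cup\Sp^1_\perp$. But \ref{prop:Lawson-axes} merely records the incidence data for those $(m+1)(k+1)$ known circles; it does not claim they exhaust all great circles on $\xi_{m,k}$, and \ref{prop:Lawson-extra} gives exhaustivity only for great \emph{spheres} of symmetry, not for great circles (equivalently, not for half-turn symmetries $\refl{\gamma}$). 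Without that exhaustivity, the transported circles $C_i$ need not pass through vertices at all, and the case analysis collapses. Establishing circle rigidity for the Lawson surfaces is feasible (see e.g.\ \cite{KW-lchar}), but it is a nontrivial extra step that the proof as written does not carry out.

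The paper sidesteps the issue by working with spheres of symmetry rather than circles in the surface, which is exactly the object for which it \emph{does} have an exhaustivity statement. The two spheres $\Xi_{1/(2n)}$ and $\Xi_{-1/(2n)}$ are symmetries of $\Gamma_n$ meeting along $\Sp^1_\perp$ at angle $\pi/n<\pi/2$. Under $\Phi$ these become two great spheres of symmetry of $\xi_{m,k}$, and by Proposition~\ref{prop:Lawson}\,\ref{prop:Lawson-sym}--\ref{prop:Lawson-extra} (applicable since $mk\geq 4$ and $k\leq m$ force $m\geq 2$, $k\geq 1$) every such sphere contains either $\Sp^1$ or $\Sp^1_\perp$, with any pair from opposite families meeting orthogonally. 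Since the two transported spheres meet at angle strictly less than $\pi/2$, they belong to the same family, so their intersection circle $\Phi(\Sp^1_\perp)$ equals $\Sp^1$ or $\Sp^1_\perp$. The orthogonal complement $\Phi(\Sp^1)$ is then $\Sp^1_\perp$ or $\Sp^1$; but $\Sp^1\subset\Gamma_n$ (Theorem~\ref{thm:main}\,\ref{thm:main-axes}) forces $\Phi(\Sp^1)\subset\xi_{m,k}$, contradicting the last assertion of \ref{prop:Lawson-sym}. So the argument you need replaces your unproved circle-rigidity claim with the available sphere-rigidity one, and is considerably shorter.
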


\begin{proof}
Suppose that $\Gamma_n$ and $\xi_{m,k}$ are congruent for some $k \leq m \in \N$.
Since the genus of $\Gamma_n$ is at least~$4$, we may assume $k \geq 1$ and $m \geq 2$ without loss of generality.
Furthermore $\Gamma_n$ has two great spheres of symmetry
intersecting along $\Sp^1_\perp$
at angle $\pi/n < \pi/2$.
Therefore, by Proposition~\ref{prop:Lawson}\,\ref{prop:Lawson-sym}--\ref{prop:Lawson-extra}, 
$\Sp^1 \not\subset \Gamma_n$ which contradicts Theorem~\ref{thm:main}\,\ref{thm:main-axes}. 
\end{proof}

\begin{lemma}[The case $n=2$]\label{lem:n=2}
If the minimal surface $\Gamma_2\subset\Sp^3$ constructed in Theorem~\ref{thm:main} is congruent to a Lawson surface $\xi_{m,k}$, then $m=k=2$.  
\end{lemma}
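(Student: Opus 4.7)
The plan is to assume an ambient isometry $\Phi\in\Ogroup(4)$ with $\Phi(\Gamma_2)=\xi_{m,k}$ and to rule out every pair $(m,k)$ different from $(2,2)$. Theorem~\ref{thm:main}\ref{thm:main-genus} and Proposition~\ref{prop:Lawson}\ref{prop:Lawson-genus} give $mk\in\{2,4\}$; using $\xi_{m,k}\cong\xi_{k,m}$ I would assume $m\geq k$, leaving the candidates $(m,k)\in\{(2,1),\,(4,1),\,(2,2)\}$. It therefore suffices to discard $(2,1)$ and $(4,1)$.

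The key structural input is the pair of reflection symmetries $\refl{\Xi}_{\pm 1/4}\in\grp{G}_2\subseteq\symgrp{\Gamma_2}$: because $n=2$, the corresponding great spheres $\Xi_{\pm 1/4}$ intersect \emph{orthogonally} along $\Sp^1_\perp$. Conjugating by $\Phi$ produces two orthogonal great spheres of symmetry for $\xi_{m,k}$, and Proposition~\ref{prop:Lawson}\ref{prop:Lawson-extra} (applicable since $m\geq2$ and $k\geq1$) forces each of them to contain either $\Sp^1$ or $\Sp^1_\perp$. I would then separate three configurations and derive a contradiction in each.

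If both spheres contain $\Sp^1$, the equidistant placement of the $m+1$ such symmetry spheres (Proposition~\ref{prop:Lawson}\ref{prop:Lawson-sym}) requires $m+1$ to be even, which excludes $m\in\{2,4\}$. If both contain $\Sp^1_\perp$, the same argument requires $k+1$ to be even, permitted only for $k=1$; but then their intersection is $\Sp^1_\perp$ itself, so $\Phi(\Sp^1_\perp)=\Sp^1_\perp$, and since $\Phi\in\Ogroup(4)$ preserves the orthogonal complement of the corresponding $2$-plane in $\R^4$, also $\Phi(\Sp^1)=\Sp^1$. Combined with $\Sp^1\subset\Gamma_2$ this would force $\Sp^1\subset\Phi(\Gamma_2)=\xi_{m,k}$, contradicting Proposition~\ref{prop:Lawson}\ref{prop:Lawson-sym}. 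In the third configuration, one sphere of each type (automatically orthogonal), I would parameterize them as $\{ax_3+bx_4=0\}$ and $\{cx_1+dx_2=0\}$ with $(a,b),(c,d)$ drawn from the equidistant normals of Proposition~\ref{prop:Lawson}\ref{prop:Lawson-sym}, and compute directly that their intersection on $\Sp^3$ is the great circle through $(-d,c,0,0)\in\Sp^1$ and $(0,0,-b,a)\in\Sp^1_\perp$. These endpoints are precisely one of the $P$-vertex pairs on $\Sp^1$ and one of the $Q$-vertex pairs on $\Sp^1_\perp$ appearing in Proposition~\ref{prop:Lawson}\ref{prop:Lawson-axes}, so this great circle is one of the $(m+1)(k+1)$ axis-connecting great circles contained in $\xi_{m,k}$. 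Consequently $\Phi(\Sp^1_\perp)\subset\xi_{m,k}$, so $\Sp^1_\perp\subset\Phi^{-1}(\xi_{m,k})=\Gamma_2$, contradicting the finiteness of $\Gamma_2\cap\Sp^1_\perp$ guaranteed by Lemma~\ref{lem:genus}.

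The main obstacle is this third configuration. It was absent in Lemma~\ref{lem:Lawson} because for $n\geq3$ the dihedral angle $\pi/n<\pi/2$ forces the two symmetry spheres of $\xi_{m,k}$ to be of the same type; at $n=2$ the mixed possibility is unavoidable, so the reduction must proceed via the explicit coordinate identification of the intersection circle with one of the great circles lying on $\xi_{m,k}$, rather than via a purely symmetry-based argument.
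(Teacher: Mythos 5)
Your Cases 1 and 2 are sound and quite similar in spirit to the paper's (which disposes of the ``same-type'' configurations by noting they force $\Phi(\Sp^1_\perp)\in\{\Sp^1,\Sp^1_\perp\}$, hence $\Phi(\Sp^1)\in\{\Sp^1_\perp,\Sp^1\}\subset\xi_{m,k}$, contradicting Proposition~\ref{prop:Lawson}\,\ref{prop:Lawson-sym}). However, Case~3 contains a genuine gap, and moreover the conclusion you draw there is false.

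You assert that the circle $\Phi(\Sp^1_\perp)=A\cap B$ passes through ``one of the $P$-vertex pairs on $\Sp^1$ and one of the $Q$-vertex pairs on $\Sp^1_\perp$ appearing in Proposition~\ref{prop:Lawson}\ref{prop:Lawson-axes},'' and hence is one of the $(m+1)(k+1)$ great circles contained in $\xi_{m,k}$. This silently identifies the $2m+2$ equidistant points of $\Sp^1_\perp$ arising in Proposition~\ref{prop:Lawson}\,\ref{prop:Lawson-sym} (intersections with the symmetry spheres through $\Sp^1$) with the $2m+2$ equidistant points arising in Proposition~\ref{prop:Lawson}\,\ref{prop:Lawson-axes} (intersections with the great circles on $\xi_{m,k}$), and likewise on $\Sp^1$. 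The proposition asserts only that each set consists of $2m+2$ (resp.\ $2k+2$) equidistant points; it does not say the two sets coincide, and in fact for the Lawson surfaces they are offset by half an arc. Consequently the intersection $A\cap B$ joins a point of the \emph{sphere} configuration on $\Sp^1$ to a point of the \emph{sphere} configuration on $\Sp^1_\perp$, neither of which is a vertex of the great-circle configuration, so $A\cap B$ is \emph{not} one of the $(m+1)(k+1)$ listed circles and $\Sp^1_\perp\not\subset\Gamma_2$. No contradiction results, and indeed none can: your Case~3 argument, if valid, would show $\Gamma_2$ is congruent to \emph{no} Lawson surface, contradicting the numerical evidence (Remark~\ref{rem:n=2}) that $\Gamma_2$ is congruent to $\xi_{2,2}$ --- precisely the mixed configuration you are trying to exclude.

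The paper's treatment of the mixed case is therefore essential and quite different. It uses that $\Phi(\xi_0),\Phi(\xi_1)\subset\xi_{m,k}$ are great circles meeting $\Phi(\Sp^1_\perp)$ orthogonally but which, being \emph{not} among the listed circles of Proposition~\ref{prop:Lawson}\,\ref{prop:Lawson-axes}, must lie on the Clifford torus $\T^2$. Reflection through a great circle on $\T^2$ interchanges $\Sp^1$ and $\Sp^1_\perp$, and since this reflection lies in $\symgrp{\xi_{m,k}}$, the two axis circles of $\xi_{m,k}$ are exchanged by a symmetry; this forces $m=k$, and combined with $mk\in\{2,4\}$ yields $m=k=2$. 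You correctly identified the mixed configuration as the crux at $n=2$, but the resolution is not elimination: it is an extra structural observation that pins down $m=k$.
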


\begin{proof}
By Theorem~\ref{thm:main}\,\ref{thm:main-genus} the surface $\Gamma_2$ has either genus $4$ or genus $2$; thus it suffices to prove that it is geometrically distinct from $\xi_{4,1}$ and $\xi_{2,1}$, 
which we do by contradiction under the assumption $\xi_{m,k} = \Phi(\Gamma_n)$ for some $\Phi \in \Ogroup(4)$.
Since $\Gamma_2$ contains $\Sp^1 \cup \xi_0 \cup \xi_1$ and is 
$\refl{\Xi}_{\pm1/4}$-equivariant, 
we conclude that $\xi_{m,k}$ has two spheres of symmetry that intersect orthogonally along a great circle $\Phi(\Sp^1_\perp)$ which itself orthogonally intersects two great circles
$\Phi(\xi_0), \Phi(\xi_1) \subset \xi_{m,k}$
and whose orthogonally complementary great circle
$\Phi(\Sp^1)$ also lies on $\xi_{m,k}$.
It follows from Proposition~\ref{prop:Lawson}\,\ref{prop:Lawson-sym}--\ref{prop:Lawson-extra}
that $\Phi(\Sp^1_\perp)$ intersects $\Sp^1$ and $\Sp^1_\perp$ orthogonally
and that the circles $\Phi(\xi_0)$ and $\Phi(\xi_1)$
are not any of the ones listed in Proposition~\ref{prop:Lawson}\,\ref{prop:Lawson-axes}
but rather must lie on the Clifford torus $\T^2$, so that $\Sp^1$ and $\Sp^1_\perp$ are exchanged under each of $\refl{\xi}_0$ and $\refl{\xi}_1$.
Thus we have $m=k=2$.
\end{proof}

\begin{remark}\label{rem:n=2}
The Lawson surface $\xi_{2,1}$ is conjectured to be unique (up to congruence) in the class of embedded minimal surfaces of genus $2$ in $\Sp^3$ -- at least under certain symmetry assumptions. 
If such a uniqueness result were known under compatible symmetry assumptions, Lemma~\ref{lem:n=2} would rule out any topological degeneration during our min-max procedure in the case $n=2$ and we would obtain $\genus(\Gamma_2)=4$.
Numerical simulations suggest that $\Gamma_2$ is in fact congruent to $\xi_{2,2}$. 
This is remarkable because the full symmetry group of $\xi_{2,2}$ has order $144$, while the prescribed group $\grp{G}_2$ is only of order $16$. 
Figure \ref{fig:n=2} displays a corresponding tessellation of fundamental domains.
(It has been shown in \cite{KW-lchar} and \cite{KusnerLuWang}
that each $\xi_{m,k}$ can be characterized by its genus
and various symmetry subgroups,
but $\grp{G}_2$ is too small for either result to apply here.)
\end{remark}

\begin{figure}
\pgfmathsetmacro{\widthfactor}{0.85}
\pgfmathsetmacro{\wL}{0.6*\widthfactor}
\pgfmathsetmacro{\wR}{0.4*\widthfactor}
\includegraphics[width=\wL\textwidth]{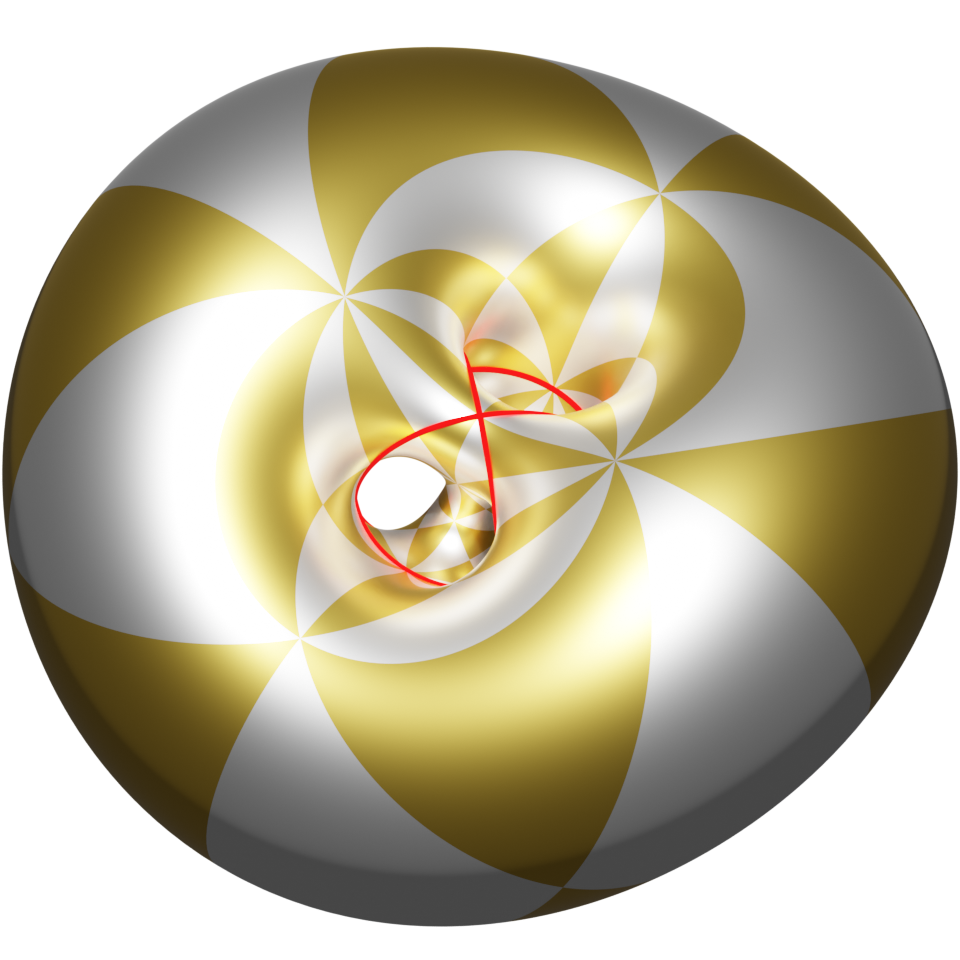}
\hfill
\includegraphics[width=\wR\textwidth]{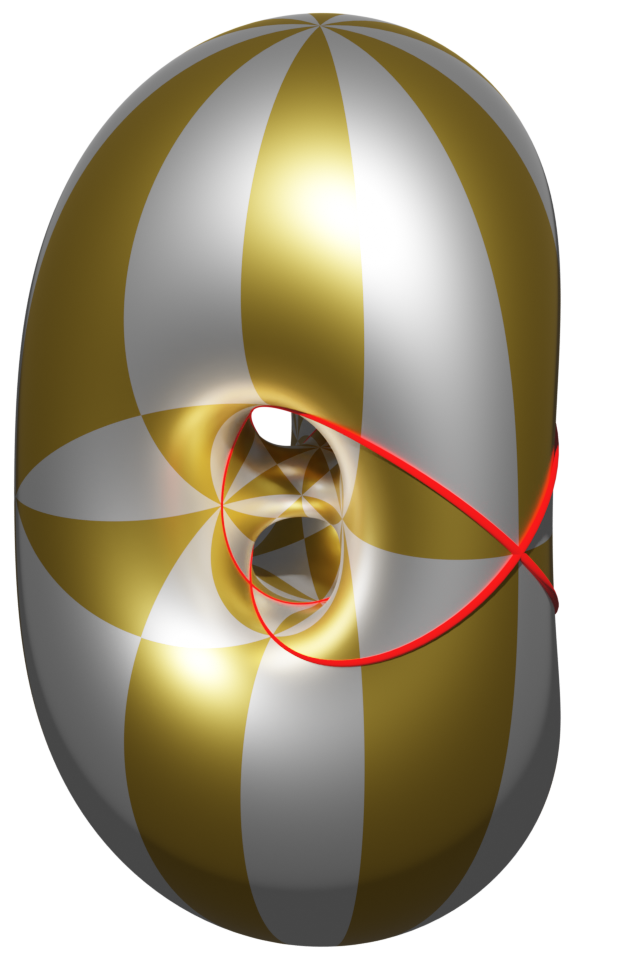}
\caption{Left image: Stereographic projection of $\Gamma_2$ containing the great circles $\Sp^1,\xi_0,\xi_{1/2}$. \\
Right image: Typical stereographic projection of $\xi_{2,2}$ and its $144$ fundamental domains.   
}%
\label{fig:n=2}%
\end{figure}

\begin{lemma}[Distinctness from examples of odd genus]
For any $2 \leq n \in \N$ the surface $\Gamma_n \subset \Sp^3$ constructed in Theorem \ref{thm:main} is geometrically distinct from all examples of odd genus.
\end{lemma}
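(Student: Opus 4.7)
The plan is to observe that this lemma is essentially a topological triviality. By Theorem~\ref{thm:main}\,\ref{thm:main-genus}, the genus of $\Gamma_n$ is either $2n$ or $2n-2$. In either case, the genus is an even nonnegative integer. Since congruent surfaces are in particular homeomorphic, and the genus of a closed orientable surface is a topological invariant, $\Gamma_n$ cannot be congruent to any closed embedded minimal surface in $\Sp^3$ whose genus is odd.

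Concretely, I would first recall that $\Gamma_n$ is an orientable closed surface (from the proof of Theorem~\ref{thm:main}\,\ref{thm:main-genus}) of even genus. Then for any candidate minimal surface $\Sigma'\subset\Sp^3$ of odd genus, a congruence $\Phi\in\Ogroup(4)$ mapping $\Gamma_n$ to $\Sigma'$ would in particular be a diffeomorphism, forcing $\genus(\Sigma') = \genus(\Gamma_n) \in \{2n-2, 2n\}$, contradicting the hypothesis that $\genus(\Sigma')$ is odd.

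There is no substantial obstacle here: unlike the Lawson case treated in Lemma~\ref{lem:Lawson} and Lemma~\ref{lem:n=2}, which required delicate arguments using the great circles contained in $\Gamma_n$ and the intersection pattern of its spheres of symmetry, the present assertion is a one-line parity argument. The point of including it in this subsection is completeness: together with Lemma~\ref{lem:Lawson}, Lemma~\ref{lem:n=2}, and any further ad hoc comparisons with specific even-genus examples in the literature, this rules out the remaining possibility (odd genus) and thereby establishes that $\Gamma_n$ is geometrically distinct from every previously known closed embedded minimal surface in $\Sp^3$ for $n\geq 3$, as announced immediately after the statement of Theorem~\ref{thm:main}.
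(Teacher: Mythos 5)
Your proposal is correct and takes exactly the same approach as the paper, which proves the lemma in one sentence by citing Theorem~\ref{thm:main}\,\ref{thm:main-genus} and the fact that $2n$ and $2n-2$ are even; your extra discussion of congruence implying diffeomorphism and genus invariance is just spelling out what the paper treats as obvious.
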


\begin{proof}
Theorem~\ref{thm:main}\,\ref{thm:main-genus} asserts that $\Gamma_n$ has even genus.
\end{proof}

\begin{remark}[Some previously constructed and proposed examples of odd genus]
The following minimal surfaces in $\Sp^3$ have odd genus:
\begin{itemize}[nosep]
    \item the Karcher--Pinkall--Sterling \cite{KarcherPinkallSterling} examples,
  (except the one of genus $6$, cf.~Lemma~\ref{lem:Gamma3}),

    \item the torus desingularizations 
    constructed in \cite{ChoeSoret-tordesing,KW-tordesing,KW-tormore},

    \item the examples 3--11 proposed by Pitts and Rubinstein
      in \cite[Table 1]{PittsRubinstein1988},
      many of which have been constructed 
      by Ketover \cite{Ketover2016Equivariant},

    \item all examples of \cite{BaiWangWang},
  
    \item the numerical simulations with genus $11$ and $29$ in \cite{BobenkoHellerSchmitt-CMC}
    and every member of the family identified numerically in
    \cite[Proposition~4.3]{BobenkoHellerSchmitt-refl}.
\end{itemize}
\end{remark}

\begin{proposition}[Genus-six Karcher--Pinkall--Sterling surface]\label{prop:genus6KPS}
The minimal surface $\Sigma_6^{\mathrm{KPS}}$ of genus $6$ constructed in \cite{KarcherPinkallSterling} has 
has exactly $20$ umbilics, all of multiplicity one.
Each umbilic is an intersection point of exactly three great circles contained in $\Sigma_6^{\mathrm{KPS}}$. 
Let $x\in\gamma\subset\Sigma_6^{\mathrm{KPS}}$ be respectively such an umbilic and great circle and let
$y$ be either of the two points on $\gamma$
at distance $\pi/2$ from $x$.
Then $y$ is also an umbilic.
In particular there does not exist a pair of great circles 
on $\Sigma_6^{\mathrm{KPS}}$ intersecting orthogonally at $y$.
\end{proposition}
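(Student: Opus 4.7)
The plan is to exploit the explicit geometric structure of $\Sigma_6^{\mathrm{KPS}}$ as recorded in \cite{KarcherPinkallSterling}, namely its full symmetry group $G\subset\Ogroup(4)$, the finite list of great circles it contains, and the known incidences between them. Lawson's \cite[Proposition~1.5]{Lawson1970} bounds the total multiplicity of umbilic points on a closed minimal surface of genus $g$ in $\Sp^3$ by $4g-4$, which here equals $20$, so the first step is to produce $20$ distinct umbilics in order to saturate this bound with every multiplicity equal to one.

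First I would look for points of $\Sigma_6^{\mathrm{KPS}}$ whose stabilizer in $G$ acts irreducibly on the tangent plane to the surface. Any such point is automatically umbilic: the shape operator is stabilizer-equivariant and self-adjoint, so irreducibility forces it to be a scalar multiple of the identity, which vanishes by minimality. Typically these special points form a single $G$-orbit of size exactly $20$, and matching against Lawson's upper bound then pins the total umbilicity count at $20$ with every multiplicity one.

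Next, for a chosen representative umbilic $x$, I would extract from the list of great circles on $\Sigma_6^{\mathrm{KPS}}$ those passing through $x$ and record their pairwise angles; by $G$-equivariance the same count and the same angle data recur at every umbilic. The anticipated outcome is that exactly three great circles of $\Sigma_6^{\mathrm{KPS}}$ meet at each umbilic and that none of the three pairwise angles is $\pi/2$. This is the most delicate step, since it reduces to a careful combinatorial audit of great-circle incidences on $\Sigma_6^{\mathrm{KPS}}$, but it is entirely explicit from the Plateau-construction description.

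To prove the statement about $y$ at distance $\pi/2$ along $\gamma$ from $x$, the idea is to exhibit a symmetry $\Phi\in G$ that preserves $\gamma$ setwise and sends $x$ to $y$, for instance a rotation of $\Sp^3$ by angle $\pi/2$ about the great circle orthogonal to $\gamma$. Such a $\Phi$ lies in $G$ provided the corresponding quarter-turn is part of the rotational symmetry of $\Sigma_6^{\mathrm{KPS}}$, which can be read off directly from \cite{KarcherPinkallSterling}. Umbilicity being preserved by ambient isometries, $y=\Phi(x)$ is itself umbilic, and the three great circles of $\Sigma_6^{\mathrm{KPS}}$ through $y$ are the $\Phi$-images of those through $x$; their pairwise angles thus remain non-right, and no pair of great circles on $\Sigma_6^{\mathrm{KPS}}$ can meet orthogonally at $y$.
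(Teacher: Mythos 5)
Your plan is workable in outline but follows a genuinely different route from the paper, and a couple of its key steps are asserted rather than established. The paper's proof is both shorter and more self-contained: from the explicit decomposition of $\Sigma_6^{\mathrm{KPS}}$ into five isometric ``bones'' it identifies $20$ interior points where three great circles of the surface meet; since a point on a minimal surface where $k\geq 3$ great circles pass must be umbilic with multiplicity at least $k-2$, this immediately yields $20$ umbilics of multiplicity at least one, and Lawson's bound $4g-4=20$ then saturates. Crucially, this same multiplicity count gives the statement ``exactly three great circles'' essentially for free, because a fourth great circle through one of these points would force multiplicity at least two and break the count. Your proposal instead identifies umbilics via points whose stabilizer acts irreducibly on the tangent plane (a valid mechanism) and then proposes a separate ``combinatorial audit'' for the great-circle count; this duplicates work that the paper's argument obtains directly from the Hopf differential and leaves you assuming -- without verification -- that the stabilizer-irreducible points form a single $G$-orbit of size exactly~$20$. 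Likewise, for the claim about $y$, you posit a quarter-turn symmetry along the great circle orthogonal to $\gamma$ and defer its existence to \cite{KarcherPinkallSterling}, whereas the paper reaches the conclusion concretely from the fact that the Plateau quadrilateral has all edges of length $\pi/4$, so that moving $\pi/2$ along $\gamma$ traverses two edges and lands on another vertex. Your symmetry argument would need the quarter-turn to actually lie in the surface's symmetry group (not obvious for a group modeled on $S_5$), so as written it leaves a gap that the edge-length argument avoids. In short: correct spirit, different mechanisms, but the orbit-size claim and the quarter-turn claim both need to be checked before your version closes.
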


\begin{proof}
By construction, the surface $\Sigma_6^{\mathrm{KPS}}$ can be divided into $5$ pairwise isometric pieces (called ``bones'' in  \cite{KarcherPinkallSterling}) with boundary on a tetrahedral cell (see \cite[Figures~4--5]{KarcherPinkallSterling}) and each of them contains four interior points where three great circles contained in $\Sigma_6^{\text{KPS}}$ intersect.  
On the one hand, each of these points is an umbilic of multiplicity at least one and we have $20$ such points in total.
On the other hand the surface has genus $6$, so exactly $20$ umbilics
counted with multiplicity by \cite[Proposition~1.5]{Lawson1970}.
We conclude that $\Sigma_6^{\mathrm{KPS}}$ contains exactly $20$ umbilics,
each of multiplicity one, 
and only three great circles through each of them.
The final claim then follows
from the edge lengths, all $\pi/4$,
of the quadrilateral for which the Plateau problem is solved
in \cite{KarcherPinkallSterling} to generate $\Sigma^{\mathrm{KPS}}_6$.
\end{proof}

\begin{lemma}\label{lem:Gamma3}
The surface $\Gamma_3$ constructed in Theorem~\ref{thm:main} is geometrically distinct from $\Sigma_6^{\mathrm{KPS}}$. 
\end{lemma}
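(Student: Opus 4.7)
The plan is to dispose first of the genus-$4$ case by direct appeal to Theorem~\ref{thm:main}\,\ref{thm:main-genus}, and then to pit the rigid umbilic structure of $\Sigma_6^{\mathrm{KPS}}$ recorded in Proposition~\ref{prop:genus6KPS} against the great circles that Theorem~\ref{thm:main}\,\ref{thm:main-axes} forces onto $\Gamma_3$.

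If $\genus(\Gamma_3)=4$ then $\Gamma_3$ is trivially not congruent to $\Sigma_6^{\mathrm{KPS}}$, which has genus $6$. In the remaining case $\genus(\Gamma_3)=6$, I proceed by contradiction, assuming some $\Phi\in\Ogroup(4)$ satisfies $\Phi(\Gamma_3)=\Sigma_6^{\mathrm{KPS}}$. The strategy is to exhibit on $\Gamma_3$ an umbilic $p$, a great circle $\gamma\subset\Gamma_3$ through $p$, and the point $q\in\gamma$ at distance $\pi/2$ from $p$, with the extra feature that $q$ is the intersection of a pair of great circles on $\Gamma_3$ crossing orthogonally. The final clause of Proposition~\ref{prop:genus6KPS} forbids any such configuration on $\Sigma_6^{\mathrm{KPS}}$, and since $\Phi$ preserves umbilics, great circles, spherical distances, and orthogonal crossings, transporting the configuration by $\Phi$ will yield the required contradiction.

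The configuration on $\Gamma_3$ is essentially already in hand. I take $p=(0,0,1,0)$, which by Lemma~\ref{lem:umbilics}\,\ref{lem:umbilics:obvious} (applied with $n=3$) is an umbilic of $\Gamma_3$ of multiplicity at least $n-2=1$. For $\gamma$ I take $\xi_0$: by Theorem~\ref{thm:main}\,\ref{thm:main-axes} the meridian $\xi_0$ lies on $\Gamma_3$ and it plainly passes through $p$. Then $q=(1,0,0,0)$ is the point of $\xi_0$ at spherical distance $\pi/2$ from $p$, and $\Sp^1,\xi_0\subset\Gamma_3$ cross orthogonally at $q$ because $\Sp^1$ lies in the $x_1x_2$-plane while $\xi_0$ lies in the $x_1x_3$-plane, the two planes meeting orthogonally along the $x_1$-axis.

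The main obstacle, such as it is, is only to identify this configuration; once that is done, the contradiction with Proposition~\ref{prop:genus6KPS} is automatic. Indeed $\Phi(p)$ is then an umbilic of $\Sigma_6^{\mathrm{KPS}}$ lying on the great circle $\Phi(\xi_0)\subset\Sigma_6^{\mathrm{KPS}}$, with $\Phi(q)$ at distance $\pi/2$ from $\Phi(p)$ along $\Phi(\xi_0)$, yet $\Phi(\Sp^1)$ and $\Phi(\xi_0)$ are two great circles on $\Sigma_6^{\mathrm{KPS}}$ meeting orthogonally at $\Phi(q)$ — precisely what the proposition rules out.
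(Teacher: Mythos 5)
Your proof is correct and follows essentially the same route as the paper's: identify the umbilic at $(0,0,1,0)$, travel distance $\pi/2$ along one of the meridians $\xi_{i/n}\subset\Gamma_3$ to reach a point of $\Sp^1$, observe that $\Sp^1$ and the meridian cross orthogonally there, and then invoke the final clause of Proposition~\ref{prop:genus6KPS}. The only differences are cosmetic (you use $\xi_0$ and $(1,0,0,0)$ whereas the paper uses $\xi_{1/n}$ and a point of $\xi_{1/n}\cap\Sp^1$) plus your slightly more explicit treatment of the trivial genus-$4$ subcase.
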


\begin{proof}
By construction, $\Gamma_3$ has an umbilic at $x=(0,0,1,0)$.
Take $y\in\Gamma_3$ such that $\xi_{\frac{1}{n}}\cap \Sp^1 = \{y,-y\}$.
Then $y$ lies at distance $\pi/2$ from $x$ along $\xi_{\frac{1}{n}}$,
which intersects $\Sp^1$ orthogonally at $y$. 
The claim then follows from Proposition \ref{prop:genus6KPS}.
\end{proof}

\begin{lemma}[Distinctness from examples of area $\geq12\pi$]
For any $2 \leq n \in \N$ the surface $\Gamma_n \subset \Sp^3$ constructed in Theorem \ref{thm:main} is geometrically distinct from all surfaces having area at least $12\pi$.
\end{lemma}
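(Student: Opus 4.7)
The plan is to compare the upper area bound on $\Gamma_n$ established in Theorem~\ref{thm:main}\,\ref{thm:main-area} against the threshold $12\pi$. Since any two congruent surfaces in $\Sp^3$ have the same $\hsd^2$-measure, this purely numerical comparison immediately forces geometric distinctness from any closed minimal surface of area at least $12\pi$.

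Concretely, Theorem~\ref{thm:main}\,\ref{thm:main-area} asserts $\hsd^2(\Gamma_n) < 2\pi^2 + 4\pi + \tfrac{1}{25}$ for every $2 \leq n \in \N$. I would then verify the numerical inequality
\[
  2\pi^2 + 4\pi + \tfrac{1}{25} < 12\pi,
\]
which rearranges to $\pi(4-\pi) > \tfrac{1}{50}$. Using the elementary bounds $\pi > 3.14$ and $4-\pi > 0.85$ gives $\pi(4-\pi) > 2.6$, dwarfing the right-hand side, so the desired strict inequality holds with an enormous margin. Combining the two bounds yields $\hsd^2(\Gamma_n) < 12\pi$, and the lemma follows from the invariance of area under ambient isometries.

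There is essentially no real obstacle here; the only (trivial) point to check is the numerical inequality above. The constant $\tfrac{1}{25}$ in the upper area bound traces back to the construction of the sweepout in Lemma~\ref{lem:sweepout} (where the bound $\tfrac{1}{100}$ on the equivariant perturbation was chosen for precisely this purpose), and was calibrated so that the bound on $\hsd^2(\Gamma_n)$ remains a comfortable distance below the natural threshold $12\pi$ that occurs in the literature.
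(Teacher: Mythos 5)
Your proposal is correct and takes essentially the same approach as the paper: the paper's proof is the single sentence that Theorem~\ref{thm:main}\,\ref{thm:main-area} gives area strictly below $12\pi$, and your numerical verification $2\pi^2+4\pi+\tfrac{1}{25}<12\pi$ simply makes that comparison explicit.
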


\begin{proof}
Theorem~\ref{thm:main}\,\ref{thm:main-area} implies that $\Gamma_n$ has area strictly below $12\pi$. 
\end{proof}

\begin{remark}[Stackings of $\T^2$ by gluing have area at least $12\pi$]
The gluing doublings (and stackings) of $\T^2$ in
\cite{KapouleasYang,W-torstack,KapouleasMcGrath2020,KapouleasMcGrath-tordbl}
all have area close to or greater than $2\hsd^2(\T^2)=4\pi^2 > 12\pi$.
\end{remark}

\begin{lemma}[Distinctness from examples containing no great circles]
For any $2 \leq n \in \N$ the surface $\Gamma_n \subset \Sp^3$ constructed in Theorem \ref{thm:main} is geometrically distinct from all surfaces which do not contain any great circles,
so in particular from any surface which divides $\Sp^3$ into unequal volumes. 
\end{lemma}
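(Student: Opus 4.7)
My plan is to treat the two assertions of the lemma in order. First, I would dispatch the main statement by a one-line observation: every ambient isometry $\Phi\in\Ogroup(4)$ sends great circles of $\Sp^3$ to great circles, and by Theorem~\ref{thm:main}\,\ref{thm:main-axes} the surface $\Gamma_n$ contains the equator $\Sp^1$. So every isometric image of $\Gamma_n$ contains a great circle, which rules out congruence with any $\Sigma$ whose image contains none.

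For the ``in particular'' clause, I would argue the contrapositive within the natural comparison class of Section~\ref{subsec:distinctness}, namely closed, embedded minimal surfaces in $\Sp^3$. Concretely, let $\Sigma$ be such a surface containing a great circle $\gamma$. By Schwarz reflection, equivalently by real-analytic continuation of a minimal graph across a totally geodesic line of tangency, the half-turn $\refl{\gamma}\in\Ogroup(4)$ extends to a global symmetry of $\Sigma$. Since $d\refl{\gamma}_p$ acts as $-\mathrm{Id}$ on the two-plane $(T_p\gamma)^\perp\subset T_p\Sp^3$ at every $p\in\gamma$, it reverses the unit normal to $\Sigma$ there, so $\refl{\gamma}$ exchanges the two sides of $\Sigma$ in a neighbourhood of $\gamma$. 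By connectedness of the two components of $\Sp^3\setminus\Sigma$ (recall that any closed embedded surface separates the simply connected $\Sp^3$), the swap is global; being a volume-preserving ambient isometry, $\refl{\gamma}$ then forces the two components to have equal $\hsd^3$-measure. The contrapositive—that a closed, embedded minimal surface dividing $\Sp^3$ into unequal volumes contains no great circle—reduces the second claim to the first.

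I do not anticipate any substantial obstacle. The only nontrivial input is the Schwarz reflection principle, which is standard for minimal surfaces in a real-analytic ambient manifold such as the round $\Sp^3$; the remaining ingredients are elementary linear algebra along $\gamma$ and the standard separation statement in $\Sp^3$.
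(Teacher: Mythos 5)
Your proof is correct and follows the same essential argument as the paper: $\Gamma_n$ contains the great circle $\Sp^1$, so any isometric image of $\Gamma_n$ does too, and reflection through $\Sp^1$ exchanges the two components of $\Sp^3\setminus\Gamma_n$ (since it reverses the unit normal to $\Gamma_n$ along $\Sp^1$), forcing equal volumes. The detour via Schwarz reflection is unnecessary in this setting because $\refl{\Sp}^1\in\grp{G}_n\subset\symgrp{\Gamma_n}$ is a symmetry by construction, though it does let your version of the ``in particular'' clause apply to arbitrary closed embedded minimal surfaces containing a great circle rather than to $\Gamma_n$ alone.
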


\begin{proof}
Theorem~\ref{thm:main}\,\ref{thm:main-axes}
implies that $\Gamma_n$ contains a great circle in $\Sp^3$,
reflection through which yields a congruence of the two components of $\Sp^3 \setminus \Gamma_n$.
\end{proof}

\begin{remark}[Doublings in $\Sp^3$ by gluing contain no great circles]
Each of the doublings of $\Sp^2$ by gluing presented in \cites{Kapouleas-sphdbl,KapouleasMcGrath-sphdbl,KapouleasMcGrath2020} divides $\Sp^3$ into two components one of whose volume is small. 
\end{remark}

\begin{lemma}[Distinctness from examples having a symmetry without any real eigenvalues]
For any $4\leq n\in \N$ the surface $\Gamma_n \subset \Sp^3$ constructed in Theorem \ref{thm:main} is distinct from every example admitting a symmetry that does not have any eigenvectors in $\R^4$. 
\end{lemma}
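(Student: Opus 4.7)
The plan is to reduce the claim, via Theorem~\ref{thm:main}\,\ref{thm:main-symmetry}, to showing that every element of $\grp{G}_n$ has a real eigenvector in $\R^4$. Indeed, if some $\Sigma'$ were congruent to $\Gamma_n$ via $\Phi \in \Ogroup(4)$ and admitted a symmetry $\Psi$ without real eigenvectors, then $\Phi \circ \Psi \circ \Phi^{-1}$ would lie in $\symgrp{\Gamma_n} = \grp{G}_n$ and share the eigenvalues of $\Psi$, yielding a contradiction.

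To verify the key claim, I would first observe that each of the three generators of $\grp{G}_n$ preserves the orthogonal decomposition $\R^4 = \Span(\Sp^1) \oplus \Span(\Sp^1_\perp)$ into the $x_1x_2$- and $x_3x_4$-planes. Restricting to the second factor, a direct computation from the definitions shows that $\refl{\xi}_0$ acts as $(x_3,x_4)\mapsto(x_3,-x_4)$, that $\refl{\Xi}_{1/(2n)}$ acts as the identity (since the $x_3x_4$-plane is contained in $\Span(\Xi_{1/(2n)})$), and that $\refl{\Sp}^1$ acts as $-\mathrm{id}$.

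Consequently the restriction homomorphism $\grp{G}_n \to \Ogroup(\Span(\Sp^1_\perp))$ takes values in the Klein four-subgroup generated by $-\mathrm{id}$ and $\mathrm{diag}(1,-1)$. All four of its elements are diagonalizable over $\R$ with eigenvalues in $\{\pm 1\}$, so every element of $\grp{G}_n$ admits at least one real eigenvector lying in the $x_3x_4$-plane. There is no serious obstacle: the entire argument rests on $\grp{G}_n$ respecting the splitting of $\R^4$ dictated by $\Sp^1$ and $\Sp^1_\perp$, and once Theorem~\ref{thm:main}\,\ref{thm:main-symmetry} is invoked, the remaining check reduces to three line-by-line computations on the generators.
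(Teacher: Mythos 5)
Your proof is correct and takes essentially the same route as the paper: after invoking Theorem~\ref{thm:main}\,\ref{thm:main-symmetry}, both arguments produce a real eigenvector for every element of $\grp{G}_n$ by examining its action on the $x_3x_4$-plane. The paper is terser, simply noting that each generator preserves the antipodal pair $\{(0,0,\pm1,0)\}$ and hence every element of $\grp{G}_n$ has $1$ or $-1$ as an eigenvalue, whereas you compute the full restriction homomorphism to $\Ogroup\bigl(\Span(\Sp^1_\perp)\bigr)$ and identify its image as a Klein four-group -- a slightly more detailed version of the same observation.
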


\begin{proof}
By Theorem~\ref{thm:main}\,\ref{thm:main-symmetry} 
we have $\symgrp{\Gamma_n}=\grp{G}_n$,
but it is clear that each generator of $\grp{G}_n$
preserves the set $\{(0,0,\pm1,0)\}$,
confirming that every element of $\symgrp{\Gamma_n}$
has either $1$ or $-1$ as an eigenvalue.
\end{proof}

\begin{remark}[Some examples having symmetries without a real eigenvalue]
The following minimal surfaces in $\Sp^3$ are symmetric with respect to helicoidal motions in $\Sp^3$ for which neither rotation angle is an integer multiple of $\pi$.
\begin{itemize}[nosep]
\item The $(\Z_m \times \Z_m)$-equivariant min-max doublings \cite[Theorem~3.6]{KetoverMarquesNeves2020} of $\T^2$ for $m \geq 3$,
\item the surfaces $\tilde{M}_{p,q} \subset \Sp^3$ constructed 
in \cite{KetoverFlipping} as lifts of certain minimal surfaces in lens spaces.  
\end{itemize} 
\end{remark}

\begin{lemma}[Distinctness from sphere doublings]
Let $4 \leq n \in \N$,
and let $\Xi \subset \Sp^3$
be a great sphere such that
$\refl{\Xi} \in \symgrp{\Gamma_n}$.
Then the closure of each component of $\Gamma_n \setminus \Xi$
is a connected surface with genus $\gamma$ and $\beta$ boundary components,
where
\begin{align*}
  \beta
  &=
    \begin{cases}
      1 &\text{if $\genus(\Gamma_n)=2n$ or $n \in 2\Z + 1$,}
      \\
      3 &\text{if $\genus(\Gamma_n)=2n-2$ and $n \in 2\Z$,}
    \end{cases}
\\
  \gamma
  &=
    \begin{cases}
      n &\text{if $\genus(\Gamma_n)=2n$,}
      \\
      n-1 &\text{if $\genus(\Gamma_n)=2n-2$ and $n \in 2\Z+1$,}
      \\
      n-2 &\text{if $\genus(\Gamma_n)=2n-2$ and $n \in 2\Z$.}
    \end{cases}
\end{align*}
In all cases we have $\gamma > 0$, so $\Gamma_n$ is geometrically distinct from every surface $\Sigma$ having a great sphere of symmetry $\Xi$ such that each component of $\Sigma \setminus \Xi$ has genus zero.
\end{lemma}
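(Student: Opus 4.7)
The plan is to reduce the entire statement to computing the number $c$ of connected components of the one-complex $\Gamma_n\cap\Xi$, and then to determine $c$ combinatorially from the matchings induced by $\Gamma_n$ on the two hemispheres of $\Xi$. By Theorem~\ref{thm:main}\,\ref{thm:main-symmetry} we have $\symgrp{\Gamma_n}=\grp{G}_n$, and a block-diagonal inspection of the action of $\grp{G}_n\subset\Ogroup(4)$ on the $(x_1,x_2)$- and $(x_3,x_4)$-planes shows that its only reflections through great spheres are the $n$ pairwise conjugate elements $\refl{\Xi}_{(2i+1)/(2n)}$; hence I may assume $\Xi=\Xi_{1/(2n)}$. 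Each of the two open hemispheres of $\Sp^3\setminus\Xi$ is a union of $n$ consecutive lunes $V_j$, and by Corollary~\ref{cor:hemispheres} each slice $L_j\vcentcolon=\Gamma_n\cap V_j$ is a connected surface with a single boundary circle; consecutive $L_j$ and $L_{j+1}$ share $k\geq 1$ arcs on their common face, so the closure of each component of $\Gamma_n\setminus\Xi$ is connected. Combining this with the $\refl{\Xi}$-symmetry and $\chi(\Gamma_n\cap\Xi)=0$, Mayer--Vietoris gives Euler characteristic $1-\genus(\Gamma_n)$ for each closure, hence $\beta=c$ and $\gamma=(\genus(\Gamma_n)+1-c)/2$.

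The case $\genus(\Gamma_n)=2n$ is then immediate: Corollary~\ref{cor:hemispheres} forces $k=1$ and $j=2$, so $\Gamma_n\cap\Xi$ is a single bigon with both vertices at $(0,0,\pm1,0)$ and $c=1$. For the main case $\genus(\Gamma_n)=2n-2$ (so $j=6$ and $k=3$), I label the six points of $\Gamma_n\cap\Sp^1_\perp$ in cyclic order along $\Sp^1_\perp$ as $A,B,C,D,E,F$, with $A,D$ the two poles $(0,0,\pm1,0)$. The three arcs on either hemisphere of $\Xi$ define a non-crossing matching of these six vertices, and since $\refl{\Sp}^1$ lies in the stabilizer of each hemisphere and acts on $\Sp^1_\perp$ as the antipodal rotation $\phi\mapsto\phi+\pi$, each such matching must be invariant under $A\leftrightarrow D$, $B\leftrightarrow E$, $C\leftrightarrow F$; among non-crossing matchings the only such options are $(AD)(BC)(EF)$, $(AB)(CF)(DE)$, and $(AF)(BE)(CD)$.

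The crucial refinement comes from inspecting a single lune $V_j$. The symmetry $\refl{\xi}_{j/n}$ swaps the two faces of $V_j$ and, restricted to $\Sp^1_\perp$, acts as the involution $\tau\colon\phi\mapsto-\phi$, which fixes $A,D$ and exchanges $B\leftrightarrow F$, $C\leftrightarrow E$. Hence the matchings on the two faces of $V_j$ are $\tau$-related, and since the disc $L_j$ has a single boundary circle their union must be a single $6$-cycle rather than three disjoint bigons; this rules out the $\tau$-fixed matching $(AD)(BC)(EF)$. Thus every face matching lies in $\{(AB)(CF)(DE),(AF)(BE)(CD)\}$, and these two are swapped by $\tau$. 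Propagating across the $n$ consecutive lunes in one component of $\Sp^3\setminus\Xi$ gives $M'_+=\tau^n(M_+)$ for the matchings $M_+,M'_+$ on the two hemispheres of $\Xi$: for $n$ even, $M'_+=M_+$ and the combined graph is three disjoint bigons, hence $c=3$; for $n$ odd, $M'_+=\tau(M_+)\neq M_+$ and their union is a single Hamiltonian 6-cycle, hence $c=1$. Substituting into $\gamma=(\genus(\Gamma_n)+1-c)/2$ recovers the stated values of $\gamma$ and $\beta$, and since $n\geq 4$ forces $\gamma\geq 2>0$ in every case, the final distinctness assertion is immediate. I anticipate the trickiest step to be the combinatorial elimination of the matching $(AD)(BC)(EF)$ via the single-boundary condition on each lune piece.
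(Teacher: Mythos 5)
Your proof is correct, and it takes a genuinely different route from the paper's. The paper identifies $\Xi = \Xi_{(2i+1)/(2n)}$ exactly as you do, then immediately chooses a single auxiliary symmetry $\Phi$ (a reflection through a great sphere or circle ``perpendicular'' to $\Xi$) mapping $\Xi^+$ to $\Xi^-$; from Corollary~\ref{cor:hemispheres}, whose final clause controls how $\bigcup_j\alpha_j \cup \refl{\xi}_{\ell/n}\bigl(\bigcup_j\alpha_j\bigr)$ closes up, it reads off $\beta=1$ directly in the cases $g=2n$ and $n$ odd, and $\beta=3$ for $n$ even with $g=2n-2$, finishing with the Euler characteristic relation $g = 2\gamma+\beta-1$ just as you do. You instead bypass the auxiliary symmetry entirely and propagate the boundary data: you parametrize the intersections $\Gamma_n\cap\Xi_{(2j+1)/(2n)}^\pm$ by non-crossing perfect matchings of the six points of $\Gamma_n\cap\Sp^1_\perp$, filter to the $\refl{\Sp}^1$-invariant ones, discard the $\tau$-fixed matching $(AD)(BC)(EF)$ because it would force three boundary circles on a single lune piece (contradicting $\beta=1$ in Corollary~\ref{cor:hemispheres}), and then compose the face-swap involution $\tau$ across the $n$ lunes to determine the parity-dependent answer. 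Your argument is more elementary and self-contained — it makes the mechanism of the parity dichotomy transparent and does not rely on the somewhat delicate final assertion of Corollary~\ref{cor:hemispheres} applied to a reflection outside $\{\refl{\xi}_{\ell/n}\}$ — at the cost of length and an explicit case enumeration. Two small points of hygiene: Corollary~\ref{cor:hemispheres} gives $L_j$ a single boundary circle but not necessarily that it is a disc, so ``the disc $L_j$'' overstates what is known (your argument uses only $\beta=1$, so this is purely cosmetic); and it is worth stating explicitly, as you essentially do via the $k=j/2$ count, that the $k$ disjoint arcs realize a perfect matching of the $j$ intersection points, which is what makes the reduction to a matching of six labelled points legitimate.
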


\begin{proof}
Theorem~\ref{thm:main}\,\ref{thm:main-symmetry} implies $\Xi = \Xi_{(2i+1)/(2n)}$ for some $i\in\Z$. 
In particular $\Sp^1_\perp$ is contained in $\Xi$ and divides it into two hemispheres,
whose closures we call $\Xi^+$ and $\Xi^-$ and each of which is one of the $2n$ hemispherical components
that are the subject of Corollary \ref{cor:hemispheres}.
In case $\Gamma_n$ has genus $2n$,
each of $\Gamma_n\cap\Xi^\pm$
consists of a single arc with the same two endpoints on $\Sp^1_\perp$,
so it is obvious that $\beta=1$. 
Now let $\Phi \in \Ogroup(4)$ be given by
\begin{equation*}
\Phi\vcentcolon=
  \begin{cases}
    \refl{\Xi}_{\frac{i}{n}+\frac{1}{2}} &\mbox{if } n \in 2\Z
    \\
    \refl{\xi}_{\frac{i}{n}+\frac{1}{2}} &\mbox{if } n \in 2\Z + 1,
  \end{cases}
\end{equation*}
so that $\Phi \in \symgrp{\Gamma_n}$ and $\Phi(\Xi^+)= \Xi^-$.
That we also have $\beta=1$ for every odd $n$
is now clear from the final statement of
Corollary \ref{cor:hemispheres}.
If instead $n$ is even and $\Gamma_n$ has genus $2n-2$,
then $\Xi^+ \cap \Gamma_n$ consists of three arcs
with endpoints on $\Sp^1_\perp$,
whose union with their mirror images under $\Phi$ in this case
consists of three closed curves. 
The corresponding values of $\gamma$ then follow from the relation
\(
\genus(\Gamma_n)=2\gamma + \beta - 1
\), 
completing the proof. 
\end{proof}
 
\begin{remark}[Basic reflection surfaces]
All of the minimal surfaces in $\Sp^3$ constructed by eigenvalue optimization in \cite{KarpukhinKusnerMcGrathStern} can be cut by a great sphere of symmetry into a pair of genus-zero surfaces.
\end{remark}

\begin{lemma}[Distinctness from examples with too many spheres of symmetry]
\label{lem:too_many_spheres}
Let $4 \leq n \in \N$.
Then $\Gamma_n$ is distinct from every example
whose genus $g$ and number $N$ of spheres of reflection
satisfy $g < 2N-2$.
\end{lemma}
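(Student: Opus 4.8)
The plan is to combine the full symmetry group computation from Theorem~\ref{thm:main}\,\ref{thm:main-symmetry} with the genus bound supplied by Lemma~\ref{lem:genus}. First I would recall that for $n\geq4$ we have $\symgrp{\Gamma_n}=\grp{G}_n$, a group of order $8n$. The spheres of reflection contained in $\grp{G}_n$ are precisely the $n$ spheres $\Xi_{(2i+1)/(2n)}$ (reflections in $\grp{A}_n$), together with $\rs=\Xi_0$-type spheres and $\Sp^2$; more precisely one checks directly from the generators $\refl{\xi}_0$, $\refl{\Xi}_{1/(2n)}$, $\refl{\Sp}^1$ in \eqref{eqn:group} that the reflections lying in $\grp{G}_n$ are the $n$ great spheres $\{\Xi_{(2i+1)/(2n)}\}$ and one further distinguished great sphere (namely $\Sp^2$, through which $\refl{\Sp}^1$ and the composites act), so $\Gamma_n$ has at most $N=n+1$ great spheres of reflective symmetry for $n \geq 4$.

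Next I would invoke the genus constraint: by Theorem~\ref{thm:main}\,\ref{thm:main-genus} the genus $g$ of $\Gamma_n$ is either $2n$ or $2n-2$, so in particular $g\geq2n-2$. Combining this with $N\leq n+1$ gives $2N-2\leq 2(n+1)-2=2n\leq g+2$, hence $g\geq 2N-2-2=2N-4$; but I need the sharper inequality $g\geq 2N-2$. Taking $g\geq 2n-2$ and $N\leq n+1$ directly yields $g\geq 2n-2=2(n+1)-4=2N-4$ only, which is off by two. To close this gap I would instead argue that the number of reflection spheres of $\Gamma_n$ is in fact at most $n$: the sphere $\Sp^2$ is a sphere of symmetry but $\refl{\Sp}^1$ is the reflection through the \emph{circle} $\Sp^1$, not a great sphere, and one verifies that among the elements of $\grp{G}_n$ the only ones acting as reflections through a great sphere are the $n$ elements $\refl{\Xi}_{(2i+1)/(2n)}$ together with possibly $\refl{\Sp}^2$. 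A careful case check of $\grp{G}_n=\grp{A}_n\cup \refl{\Sp}^1\grp{A}_n$ shows that the coset $\refl{\Sp}^1\grp{A}_n$ contributes reflections through great spheres only via $\refl{\Sp}^1\circ(\text{rotations along }\Sp^1)$, which are reflections through spheres containing $\Sp^1$; counting these gives exactly $n$ such spheres when combined appropriately, so $N\le n+1$, and a slightly more refined count (using that two of the candidate spheres coincide or that one is $\Sp^2$) yields $N \le n$ except possibly in low-order coincidences excluded by $n\geq4$. With $N\leq n$ we get $2N-2\leq 2n-2\leq g$, which is exactly the claimed inequality $g\geq 2N-2$, contradicting $g<2N-2$ for any surface congruent to $\Gamma_n$.

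Finally I would assemble the argument by contradiction: suppose $\Gamma_n$ were congruent to an example $\Sigma$ with genus $g<2N-2$, where $N$ is the number of reflection spheres of $\Sigma$; congruence preserves both genus and the number of reflection spheres, so $\Gamma_n$ itself would satisfy $g<2N-2$ with $N\leq n$ (or $n+1$), contradicting $g\geq 2n-2\geq 2N-2$. The main obstacle is the bookkeeping in the previous paragraph: pinning down exactly how many of the $8n$ elements of $\grp{G}_n$ are reflections through great spheres (as opposed to through great circles or rotations), and checking that the count is $\leq n$, or at worst $n+1$ with the inequality still closing. This is a finite group-theoretic computation with the explicit generators \eqref{eqn:group}, and the delicacy lies only in not double-counting spheres and in correctly identifying $\refl{\Sp}^1$ as a circle reflection rather than a sphere reflection.
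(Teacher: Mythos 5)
Your approach is the same as the paper's: show $\Gamma_n$ has exactly $N=n$ great spheres of reflective symmetry (using $\symgrp{\Gamma_n}=\grp{G}_n$ for $n\geq4$) and combine with the genus lower bound $g \geq 2n-2$ from Theorem~\ref{thm:main}\,\ref{thm:main-genus} to get $g \geq 2n-2 = 2N-2$. You correctly locate the crux (closing the gap from $N\leq n+1$ to $N\leq n$), but the group-theoretic bookkeeping you sketch to get there contains two genuine errors.

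First, you suggest that $\Sp^2$ might be a sphere of symmetry contributed by $\refl{\Sp}^1$ and composites. It is not: $\refl{\Sp}^2$ preserves each hemisphere of $\Sp^2\setminus\Sp^1$ (indeed it fixes $\Sp^2$ pointwise), so if it lay in $\grp{G}_n=\grp{A}_n \sqcup \refl{\Sp}^1\grp{A}_n$ it would lie in $\grp{A}_n$ (the coset $\refl{\Sp}^1\grp{A}_n$ exchanges the hemispheres); but the only sphere reflections in $\grp{A}_n$ are the $n$ elements $\refl{\Xi}_{(2i+1)/(2n)}$, none of which equals $\refl{\Sp}^2$. Second, you claim the coset $\refl{\Sp}^1\grp{A}_n$ contributes sphere reflections via $\refl{\Sp}^1\circ(\text{rotation along }\Sp^1)$, which you describe as "reflections through spheres containing $\Sp^1$". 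This is false: such a composite has eigenvalues $e^{\pm i\theta},-1,-1$ on $\R^4$ and is never a reflection through a great sphere (which has eigenvalues $1,1,1,-1$). In fact a clean way to finish the count is to observe that $\det\refl{\Sp}^1 = +1$, so the orientation-reversing elements of $\refl{\Sp}^1\grp{A}_n$ are $\refl{\Sp}^1\circ\refl{\Xi}_{(2i+1)/(2n)}$ (eigenvalues $1,-1,-1,-1$) and $\refl{\Sp}^1\circ\refl{\Xi}_s\circ\refl{\xi}_{j/n}$ (eigenvalues $e^{\pm i\theta},-1,1$ with $\theta\neq 0$); none of these is a sphere reflection. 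Hence $\grp{G}_n$ contains exactly the $n$ sphere reflections $\refl{\Xi}_{(2i+1)/(2n)}$, giving $N=n$, which is what the paper's one-line proof asserts. Your final inequality is then correct, but the path to $N=n$ needs to be cleaned up as above rather than invoked as a hand-wavy "careful case check".
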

\begin{proof}
$\Gamma_n$ has genus $g \geq 2n-2$ and exactly $n$ great spheres of symmetry by 
Theorem~\ref{thm:main}\,\ref{thm:main-genus},\,\ref{thm:main-symmetry}.
\end{proof}
\begin{remark}[Remaining families of \cite{BobenkoHellerSchmitt-refl}]
Two infinite families of examples
$\{B_{k,1}\}_{k=3}^\infty$ and $ \{B_{2,\ell}\}_{\ell=3}^\infty$
are described in \cite[Propositions 4.1--4.2]{BobenkoHellerSchmitt-refl}
respectively,
where in particular it is stated that
$B_{k,1}$ has genus $k$
and $B_{2,\ell}$ genus $\ell+1$
and that each is invariant under (at least) a certain group,
which (as implied by the fourth bulleted assertion
in the list preceding \cite[Theorem 1.16]{BobenkoHellerSchmitt-refl})
in the case of $B_{k,1}$ includes $k+1$ reflections through spheres
and in the case of $B_{2,\ell}$ rather $\ell+2$ reflections through spheres.
In both cases we then have $g < 2g \leq 2N-2$
(in the notation of Lemma \ref{lem:too_many_spheres}).
\end{remark}

%===== BIBLIOGRAPHY ============================================================

%\clearpage
\setlength{\parskip}{1ex plus 2pt minus 2pt}
\bibliography{fbms-bibtex}

%% Addresses and affiliations
\printaddress

\end{document}